\newtheorem{Def}{Definition}[section]
\newtheorem{Eg}[Def]{Example}
\newtheorem{Lem}[Def]{Lemma}
\newtheorem{Thm}[Def]{Theorem}
\newtheorem{Cor}[Def]{Corollary}
\newtheorem{Ass}[Def]{Assumption}
\theoremstyle{definition}
\newtheorem{Rem}[Def]{Remark}
\newcommand{\p}{\mathbb{P}}
\newcommand{\e}{\mathbb{E}}
\newcommand{\real}{\mathbb{R}}
\newcommand{\n}{\mathbb{N}}
\newcommand{\weyl}{\mathbb{W}}
\newcommand{\z}{\mathbb{Z}}
\newcommand{\1}{{\bf 1}}
\newcommand{\rd}{\,\mathrm{d}}
\newcommand{\spn}{\mathrm{Span}}
\begin{document}

\title{
Numerical schemes for radial Dunkl processes
}
\author{
Hoang-Long Ngo\footnote{
Hanoi National University of Education, 136 Xuan Thuy, Cau Giay, Hanoi, Vietnam, email~:~\texttt{ngolong@hnue.edu.vn}
}
$\quad $ and $\quad$ 
Dai Taguchi\footnote{
Department of Mathematics,
Kansai University,
Suita,
Osaka,
564-8680,
Japan,
email~:~\texttt{dai.taguchi.dai@gmail.com}
}
}
\maketitle
\begin{abstract}
We consider the numerical approximation for a class of radial Dunkl processes corresponding to arbitrary (reduced) root systems in $\mathbb{R}^{d}$. This class contains well-known processes such as Bessel processes, Dyson's Brownian motions, and square root of Wishart processes. We propose some semi--implicit and truncated Euler--Maruyama schemes for radial Dunkl processes and study their convergence rate with respect to the $L^{p}$-sup norm.
\\
\textbf{2020 Mathematics Subject Classification}: 65C30; 60H35; 91G60; 17B22\\
\textbf{Keywords}:
Radial Dunkl processes;
Dyson's Brownian motions;
semi-implicit/truncated Euler--Maruyama scheme;
Root systems
\end{abstract}


\section{Introduction}\label{Sec_Intr}

We are interested in the numerical approximation for a class of radial Dunkl processes corresponding to arbitrary (reduced) root systems in $\mathbb{R}^{d}$, which can be represented as a solution of the following  SDEs,
\begin{align}
\rd X(t)
&
=
\rd B(t)
+
\sum_{\alpha \in R_{+}}
\frac{k(\alpha)}{\langle \alpha, X(t) \rangle}
\alpha
\rd t,  \quad X(0) = x(0) \in \weyl,
\label{RD_0}
\end{align}
where $B$ is an $d$-dimensional Brownian motion defined on a  probability space $(\Omega, \mathscr{F}, \p)$ with the filtration $(\mathscr{F}_t)_{t \geq 0}$ satisfying the usual condition, $R_+$ is a positive root system in $\mathbb{R}^d$,  $k(\cdot)$ is a multiplicity function defined on $R_+$, and $\weyl$ is a Weyl chamber in $\mathbb R^d$.

Dunkl operators were introduced by Dunkl in \cite{Du89} and have been widely studied in both mathematics and physics.
For example, these operators play a crucial role in studying special functions associated with root systems (\cite{vDi, Du91, LaVi}),  Hecke algebras (\cite{Ki97}), and the Hamiltonian operators of some Calogero-Moser-Sutherland quantum mechanical systems (\cite{BaFo1, BaFo2,BaFo3,vDi,vDiVi, He91, HeSc, Kakei, LaVi, Mo75}).
The Dunkl Laplacian, which is defined via Dunkl operators, is a differential-difference operator version of the classical Laplacian.
In \cite{RoVo98}, R\"osler and Voit introduced Dunkl processes as c\`adl\`ag Markov processes with the infinitesimal generator half of the Dunkl Laplacian.
The Dunkl processes have some good features, such as the martingale property and the scaling property (\cite{GaYo05}).
A radial Dunkl process is a continuous Markov process which is defined as the $W$-radial part of a Dunkl process. In \cite{De09}, by using a method of C\'epa and L\'epingle \cite{CeLe01},  Demni proved that a radial Dunkl process can be represented as a solution of an SDE as in \eqref{RD_0} (see also \cite{Sc07} for the radial Heckman--Opdam process).

The class of radial Dunkl processes contains many well-known processes, such as the Bessel processes, Dyson's Brownian motions, and the square root of the multidimensional Wishart processes.
The square root of the Bessel process,  known as the Cox--Ingersoll--Ross process in mathematical finance, and the square root of the multidimensional Wishart process describe the evolution of spot interest rates in one-dimensional and multidimensional settings, respectively.
The numerical schemes for these processes have been studied by many authors (see  \cite{Al, BeBoDi, BoDi, ChJaMi16, DeNeSz, NeSz}).
Dyson's Brownian motions model non-colliding particle systems and have been studied not only in mathematical physics but also in random matrix theory since they have the same laws as the processes of the ordered eigenvalues of some matrix-valued Brownian motions.
The existence and uniqueness of a strong solution for non-colliding particle systems (which include Dyson's Brownian motions) have been well studied (see, e.g. \cite{CeLe97, CeLe01, GrMa13, GrMa14,  NT20, RoSh}).
However, to the best of our knowledge, there are very few results on the numerical analysis for non-colliding particle systems (\cite{LiMe, NT20, LT20}), despite their practical importance.
The numerical approximation for radial Dunkl processes  \eqref{RD_0} is very challenging since their drift coefficient contains very stiff terms of the form $\frac{1}{\langle \alpha, x \rangle}$.
Moreover, since the radial Dunkl processes always take values in the Weyl chamber $\weyl$, it is expected that the approximate solution also takes values in the same domain. 


The forward Euler-Maruyama approximation scheme is not suitable for equation \eqref{RD_0} since it is not difficult to see that the approximate solution could leave the Weyl chamber $\weyl$ with a positive probability.
In \cite{NT20, LT20}, the authors introduced a semi--implicit Euler--Maruyama scheme and semi--implicit Milstein schemes, which preserve the non-colliding property, for some classes of non-colliding particle systems and established their rates of convergence in  $L^p$-norm.
The keys to handling the stiff terms are the one-sided Lipschitz continuous property of the drift and the existence of the inverse moment $\mathbb{E}[|X_i(t) - X_j(t)|^{-p}]$ (see Hypothesis 2.7 in \cite{NT20}).
However, in \cite{NT20}, the finiteness of the inverse moments was proven only for systems with the dimension $d$ less than a certain threshold (see Remark \ref{Rem_inv_0}). Therefore, the approach in \cite{NT20} can not apply to large particle systems. 

This paper aims to extend the work in \cite{NT20} to $d$-dimensional radial Dunkl processes with drift for any $d \geq 1$.
By using the relation between root systems and harmonic functions (Lemma \ref{Lem_harmonic_0}), it can establish a change of measure formulae based on the Girsanov theorem (Lemma \ref{Girsanov_RD_0}).
These formulae first help to prove the existence and uniqueness of the solution of radial Dunkl equations with drift (Corollary \ref{Lem_Gir_drift_0}) and then help to show some estimates for the inverse moments (Lemma \ref{inv_moment_0}).
In order to force the approximate solution to stay in the Weyl chamber $\weyl$, we first construct the semi-implicit Euler-Maruyama scheme as in \cite{NT20}.
We will show that if $\displaystyle \min_{\alpha \in R_{+}} (k(\alpha)-1/2) \geq 3$ (resp. $\geq 16$) then the approximation scheme converges in $L^{p}$ at the rate of order  $1/2$ (resp. $1$), see Theorem \ref{main_1} and Theorem \ref{main_2}.
In addition, since the equations to be solved in each step of the semi-implicit scheme are very nonlinear and have no explicit solutions, we propose a semi-implicit and truncated Euler-Maruyama scheme that can be efficiently implemented in a computer.

This paper is structured as follows.
In Section \ref{Sec_dunkl}, we recall the definitions and some basic properties of the root system, Weyl chamber, Dunkl process, radial  Dunkl process, and the change of measure formula.
In Section \ref{sec:RDPwD}, we study the existence, uniqueness, and some properties of moments of radial Dunkl processes with drift.
Finally, the main results of this paper are presented in Section \ref{sec_EM}, where we propose two numerical schemes for radial Dunkl processes with drift and study their rates of convergence in $L^p$-norm.

\subsection*{Notations}
In this paper, elements of $\real^d$ are column vectors, and for $x \in \real^d$, we denote $x=(x_1,\ldots,x_d)^{\top}$, where for a matrix $A$, $A^{\top}$ stands for the transpose of $A$.
Let $\langle \cdot, \cdot \rangle$ denote the standard Euclidean inner product and $e_{1},\ldots,e_{d}$ be the standard basis vectors of $\real^d$.
We denote $O(d)$ by the orthogonal group and define the orthogonal reflection with respect to $\alpha \in \real^d \setminus \{0\}$ by
$
\sigma_{\alpha}(x)
=x-\frac{2\langle \alpha,x \rangle}{|\alpha|^2} \alpha,~
x \in \real^d.
$
For a function $f:\real^{d} \to \real$, we denote the gradient of $f$ by $\nabla f:=(\frac{\partial f}{\partial x_1},\ldots,\frac{\partial f}{\partial x_d})^{\top}$ and the Laplacian  by $\Delta f:=\sum_{i=1}^{d} \frac{\partial^2 f}{\partial x_i^2}$.
Let $C_b^{1,2}([0, T] \times \weyl;\real^d)$ be the space of $\real^{d}$-valued functions from $[0, T] \times \weyl$ such that the first derivative in time and the first and second derivatives in space exist and are bounded.
For a finite set $A$, we denote $|A|$ the number of all elements of $A$.
We fix $T>0$ and a probability space $(\Omega, \mathscr{F},\p)$ with a filtration $(\mathscr{F}_t)_{t \in [0,T]}$ satisfying the usual conditions.

\section{Radial Dunkl processes and Examples}\label{Sec_dunkl}

\subsection{Root systems and Weyl chamber} \label{subsec_root}

We give definitions of the root system, Weyl groups, and Weyl chamber. 
For details, we refer the reader to \cite{DuXu, Hum12_2, Hum12_1}.

A {\it root system} $R$ in $\real^{d}$ is a finite set of nonzero vectors in $\real^{d}$ such that

\begin{itemize}
\item[(R1)]
$R \cap\{c \alpha ~;~c \in \real\}=\{\alpha,-\alpha\}$ for any $\alpha \in R$;
\item[(R2)]
$\sigma_{\alpha}(R)=R$ for any $\alpha \in R$.
\end{itemize}
An element of a root system is called root.
Additionally, a root system $R$ is said to be {\it crystallographic} if it holds that
\begin{itemize}
\item[(R3)]
 $c_{\alpha \beta}:=2\langle \alpha,\beta \rangle / |\alpha|^2 \in \z$, for any $\alpha,\beta \in R$.
\end{itemize}

A sub-group $W=W(R)$ of $O(d)$ is called the \emph{Weyl group} generated by a root system $R$, if it is generated by the reflections $\{\sigma_{\alpha}~;~\alpha \in R\}$, that is, $W=\langle \sigma_{\alpha} ~|~ \alpha \in R \rangle$.


Each root system can be written as a disjoint unions $R = R_+ \cup (-R_+)$, where $R_+$ and $-R_+$ are separated by a hyperplane $H_\beta := \{x \in \mathbb R^d : \langle \beta, x \rangle = 0\}$ with $\beta \not \in R$. Such a set $R_+$ is called a \emph{positive root system}.  

If $R$ can be expressed as a disjoint union of non-empty sets $R^{1}$ and $R^{2}$ with $\langle \alpha^{1},\alpha^{2} \rangle=0$, for each $\alpha^{i} \in R^{i}$, $i=1,2$, then $R$ and $W(R)$ are called \emph{decomposable} (otherwise are called \emph{indecomposable} or \emph{irreducible}).
Then $R^{i}$, $i=1,2,$ are root systems, respectively and $W(R)=W(R^{1}) \times W(R^{2}):=\{w_{1}w_{2}~;~w_{i} \in W(R^{i}),~i=1,2\}$.
Moreover, $W(R^{i})$, $i=1,2,$ act on the orthogonal subspaces $\spn(R^{i})$, respectively.
It was shown that every root system $R$ can be uniquely expressed as an orthogonal disjoint union of irreducible root systems (see, e.g., Proposition in \cite{Hum12_2} page 57).
More precisely,  there exist $r \in \n$ and irreducible root systems $R^{i}$, $i=1,\ldots,r$ such that $R^{i} \bigcap R^{j} = \emptyset$ and  $\langle \alpha^{(i)}, \alpha^{(j)} \rangle=0$, for every $\alpha^{(i)} \in R^{i}$ and $\alpha^{(j)} \in R^{j}$, $i \neq j$, and
\begin{align}\label{root_deco}
R
=
\bigcup_{j=1}^{r}
R^{j}.
\end{align}

A function $k:R \to \real$ is called a \emph{multiplicity function} if it is invariant under the natural action of $W$ on $R$, that is, $k(\alpha)=k(\beta)$ when there exists $w\in W$ such that $w \alpha=\beta$.
It follows that $k(\alpha^{j})\equiv k_{j}$ for every $\alpha^{j} \in R^{j}$.
We set $\nu(\alpha):=k(\alpha)-1/2$ and $\nu_{j}:=k_{j}-1/2$.

A connected component of $\real^d \setminus \bigcup_{\alpha \in R} H_{\alpha}$ is called \emph{Weyl chamber} of the root system $R$.
In particular, 
\begin{align*}
\weyl
:=
\{x \in \real^d~|~\langle \alpha, x \rangle>0,~\forall \alpha \in R_{+}\}
\end{align*}
is called the \emph{fundamental Weyl chamber}.

One of the important properties of (reduced) root system on stochastic calculus is that the polynomial function $\prod_{\alpha \in R_{+}} \langle \alpha,x \rangle$ is $\Delta$-harmonic.
More precisely, we have the following lemma.

\begin{Lem}[e.g. Theorem 6.2.6 in \cite{DuXu}]\label{Lem_harmonic_0}
Let $V$ be a finite set of non-zero vectors in $\real^d$.
Then $\Delta \prod_{v \in V} \langle v,x \rangle=0$ if and only if there exist constants $c_{v} \in \real$ for $v \in V$ such that $\{c_{v}v~;~v \in V\}=R_{+}$ for some (reduced) root system in $\real^{d}$ and such that no vector in $V$ is a scalar multiple of another vector in $V$.
\end{Lem}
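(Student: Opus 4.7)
The plan is to compute the Laplacian explicitly and then use a divisibility and degree argument based on the $W$-alternating property of the polynomial. Writing $p(x) := \prod_{v \in V} \langle v, x \rangle$, which is homogeneous of degree $|V|$, a direct computation gives
\begin{equation*}
\Delta p(x) = 2 \sum_{\{u,v\} \subset V,\, u \neq v} \langle u, v \rangle \prod_{w \in V \setminus \{u, v\}} \langle w, x \rangle,
\end{equation*}
a homogeneous polynomial of degree $|V| - 2$.

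For the \emph{if} direction, assume $\{c_v v : v \in V\} = R_{+}$ is a positive root system and that no two vectors in $V$ are parallel. Then $p$ differs from $\tilde p(x) := \prod_{\alpha \in R_{+}} \langle \alpha, x \rangle$ only by a nonzero multiplicative constant, so it suffices to show $\Delta \tilde p = 0$. The key property is that $\tilde p$ is \emph{alternating} under the Weyl group $W$: for every $\beta \in R_{+}$, the reflection $\sigma_\beta$ sends $\beta$ to $-\beta$ while permuting $R_{+} \setminus \{\beta\}$, so $\tilde p \circ \sigma_\beta = -\tilde p$. Since $\Delta$ commutes with orthogonal transformations, $\Delta \tilde p$ is also $W$-alternating. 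Any $W$-alternating polynomial $q$ must vanish on each reflecting hyperplane $H_\alpha$ (because $\sigma_\alpha$ is the identity on $H_\alpha$), so $q$ is divisible by the irreducible factor $\langle \alpha, x \rangle$; because the positive roots are pairwise non-parallel, these factors are pairwise coprime and hence $q$ is divisible by $\tilde p$ itself, so that $\deg q \geq |R_{+}|$. Since $\deg(\Delta \tilde p) = |R_{+}| - 2 < |R_{+}|$, we conclude $\Delta \tilde p = 0$.

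For the \emph{only if} direction, the task is to recover the root-system axioms from harmonicity. The non-parallelism assumption immediately gives (R1) for $R := V \cup (-V)$, so the real work is to show that $\sigma_v$ maps $V$ to scalar multiples of vectors in $V$ for every $v \in V$, which will yield (R2). The strategy is to apply $\sigma_v$ to $p$: since $\Delta$ is orthogonally invariant, $p \circ \sigma_v$ is again harmonic and equals $\prod_{u \in V} \langle \sigma_v(u), x \rangle$. Matching the prime factorizations of $p$ and $p \circ \sigma_v$ in $\real[x_1, \ldots, x_d]$ should then force each $\sigma_v(u)$ to be a scalar multiple of some element of $V \cup (-V)$, after which one chooses the scalars $c_v$ consistently so that the rescaled system satisfies (R2).

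The main obstacle will be making the factor-matching step of the \emph{only if} direction genuinely rigorous: harmonicity alone does not immediately pin down the individual reflected images $\sigma_v(u)$, and one must exploit the specific combinatorial structure of the expansion of $\Delta p$ displayed above (in particular, the coefficients $\langle u, v \rangle$, which control when two-variable contractions of $p$ can cancel) to conclude. For the complete proof of this direction I would follow the pairwise analysis carried out in Theorem 6.2.6 of \cite{DuXu}.
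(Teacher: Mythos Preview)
The paper does not give its own proof of this lemma: it is stated with the citation ``e.g.\ Theorem 6.2.6 in \cite{DuXu}'' and then used as a black box. So there is nothing in the paper to compare your argument against; both you and the authors ultimately defer the full statement (especially the \emph{only if} direction) to Dunkl--Xu.

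Your \emph{if} direction is the standard alternating/divisibility argument and is essentially correct, but one step is misstated. You write that for every $\beta\in R_{+}$ the reflection $\sigma_\beta$ ``permutes $R_{+}\setminus\{\beta\}$''. This is true only when $\beta$ is a \emph{simple} root; for a non-simple positive root it can fail (e.g.\ in type $A_2$, reflecting in $e_1-e_3$ sends $e_1-e_2$ to $-(e_2-e_3)$). The conclusion $\tilde p\circ\sigma_\beta=-\tilde p$ is nevertheless correct: prove it first for simple $\beta$ (where your permutation claim holds), deduce that $\tilde p\circ w=\det(w)\,\tilde p$ for all $w\in W$ since the simple reflections generate $W$, and then specialize to an arbitrary reflection $\sigma_\beta$, which has determinant $-1$. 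With this fix the degree argument goes through exactly as you wrote.

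For the \emph{only if} direction you openly defer to \cite{DuXu}, which is precisely what the paper does as well; your sketch of the factor-matching idea is reasonable but, as you acknowledge, not a complete proof.
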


Lemma \ref{Lem_harmonic_0} leads to the following useful properties of root systems.

\begin{Lem}\label{Lem_harmonic_1}
Let $R$ be a root system in $\real^{d}$.
Then $\overline{\delta}(x):=\log \prod_{\alpha \in R_{+}} \langle \alpha ,x \rangle$, $x \in \weyl$, satisfies the equation
\begin{align}\label{harmonic_0}
\Delta \overline{\delta}(x)
+
\sum_{\alpha \in R_{+}}
\frac{\langle \nabla \overline{\delta}(x),\alpha \rangle}{\langle \alpha,x \rangle}
=0.
\end{align}
Moreover, for any $x \in \weyl$, it holds that 
\begin{align}\label{harmonic_1}
\sum_{\alpha \in R_{+}}
\frac
{|\alpha|^2}
{\langle \alpha,x \rangle^2}
=
\sum_{\alpha, \beta \in R_{+}}
\frac
{\langle \alpha, \beta \rangle}
{\langle \alpha,x \rangle \langle \beta,x \rangle}.
\end{align}
\end{Lem}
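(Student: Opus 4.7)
The plan is to work with $P(x) := \prod_{\alpha \in R_+}\langle \alpha, x\rangle$ so that $\overline{\delta}(x) = \log P(x)$ on $\weyl$, and then exploit Lemma \ref{Lem_harmonic_0}, which gives $\Delta P \equiv 0$, together with the elementary identity $\Delta \log P = \Delta P / P - |\nabla P|^2/P^2 = \Delta P/P - |\nabla \overline{\delta}|^2$. Harmonicity of $P$ therefore yields the single clean relation
\begin{equation*}
\Delta \overline{\delta}(x) = -|\nabla \overline{\delta}(x)|^2,
\end{equation*}
which will be the engine driving both claims.

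For \eqref{harmonic_0}, I would differentiate the sum form $\overline{\delta}(x) = \sum_{\alpha \in R_+}\log\langle \alpha, x\rangle$ to obtain $\nabla \overline{\delta}(x) = \sum_{\alpha \in R_+} \alpha/\langle \alpha, x\rangle$, and then expand
\begin{equation*}
\sum_{\alpha \in R_+}\frac{\langle \nabla \overline{\delta}(x),\alpha\rangle}{\langle \alpha, x\rangle}
= \sum_{\alpha,\beta \in R_+}\frac{\langle \alpha,\beta\rangle}{\langle \alpha, x\rangle \langle \beta, x\rangle}
= |\nabla \overline{\delta}(x)|^2.
\end{equation*}
Combining this with the relation $\Delta \overline{\delta} = -|\nabla \overline{\delta}|^2$ derived above gives \eqref{harmonic_0} immediately.

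For \eqref{harmonic_1}, I would compute $\Delta \overline{\delta}$ a second way, namely directly from the sum form: for each $i$, $\partial_i^2 \log\langle \alpha, x\rangle = -\alpha_i^2/\langle \alpha, x\rangle^2$, so summing over $i$ and over $\alpha \in R_+$,
\begin{equation*}
\Delta \overline{\delta}(x) = -\sum_{\alpha \in R_+}\frac{|\alpha|^2}{\langle \alpha, x\rangle^2}.
\end{equation*}
Equating this with $-|\nabla \overline{\delta}(x)|^2 = -\sum_{\alpha,\beta \in R_+}\langle\alpha,\beta\rangle/(\langle \alpha, x\rangle\langle \beta, x\rangle)$ yields \eqref{harmonic_1}.

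There is no real obstacle: the whole lemma reduces to the standard $\Delta \log = \Delta/\cdot - |\nabla\cdot|^2/\cdot^2$ identity applied to the polynomial $P$, plus one direct Laplacian computation. The only point requiring the root-system structure, as opposed to an arbitrary product of linear forms, is the harmonicity $\Delta P = 0$ supplied by Lemma \ref{Lem_harmonic_0}; everything else is bookkeeping with partial derivatives.
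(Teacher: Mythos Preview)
Your proposal is correct and follows essentially the same approach as the paper: both use the harmonicity $\Delta P=0$ from Lemma~\ref{Lem_harmonic_0} to reduce $\Delta\log P$ to $-|\nabla\log P|^2$, then compute $\nabla\overline{\delta}$ and $\Delta\overline{\delta}$ directly from the sum form and equate. The only cosmetic difference is that you state the identity $\Delta\log P = \Delta P/P - |\nabla P|^2/P^2$ explicitly at the outset, whereas the paper arrives at it through the intermediate line $\sum_i\partial_i\bigl(\tfrac{1}{\pi}\partial_i\pi\bigr)$.
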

\begin{proof}
From Lemma \ref{Lem_harmonic_0}, the alternating polynomial $\pi$ defined by $\pi(x):=\prod_{\alpha \in R_{+}} \langle \alpha,x \rangle$ satisfies $\Delta \pi(x)=0$, and $\frac{\partial}{\partial x_i}\log \pi(x)=\sum_{\alpha \in R_{+}} \frac{\alpha_i}{\langle \alpha,x \rangle}$.
Hence we have
\begin{align}\label{harmonic_2}
\Delta \overline{\delta}(x)
&=
\Delta \log \pi (x)
=
\sum_{i=1}^{d}
\frac
{\partial^2}
{\partial x_i^2}
\log \pi (x)
=
\sum_{i=1}^{d}
\frac
{\partial}
{\partial x_i}
\left(
\frac{1}{\pi (x)} 
\frac{\partial \pi (x)}{\partial x_{i}}
\right)
=
-\frac
{|\nabla \pi (x)|^2}
{\pi(x)^2}\notag\\
&
=
-\langle \nabla \log \pi(x), \nabla \log \pi(x) \rangle
=
-
\sum_{i=1}^{d}
\frac{\partial}{\partial x_i}\overline{\delta}(x)
\sum_{\alpha \in R_{+}}
\frac{\alpha_i}{\langle \alpha,x \rangle}
=
-
\sum_{\alpha \in R_{+}}
\frac
{\langle \nabla \overline{\delta}(x), \alpha \rangle}
{\langle \alpha,x \rangle},
\end{align}
which concludes \eqref{harmonic_0}.
Moreover, $\Delta \overline{\delta}(x)$ can be represented as follows
\begin{align*}
\Delta \overline{\delta}(x)
=
\sum_{i=1}^{d}
\frac
{\partial_{i}}
{\partial x_i}
\sum_{\alpha \in R_{+}}
\frac
{\alpha_{i}}
{\langle \alpha,x \rangle}
=
-
\sum_{i=1}^{d}
\sum_{\alpha \in R_{+}}
\frac
{\alpha_{i}^2}
{\langle \alpha,x \rangle^2}
=
-
\sum_{\alpha \in R_{+}}
\frac
{|\alpha|^2}
{\langle \alpha,x \rangle^2}.
\end{align*}
Thus, it follows from \eqref{harmonic_2} that 
\begin{align*}
\sum_{\alpha \in R_{+}}
\frac
{|\alpha|^2}
{\langle \alpha,x \rangle^2}
=
\sum_{\alpha \in R_{+}}
\frac
{\langle \nabla \overline{\delta}(x), \alpha \rangle}
{\langle \alpha,x \rangle}
=
\sum_{i=1}^{d}
\sum_{\alpha \in R_{+}}
\frac{\alpha_i}{\langle \alpha,x \rangle}
\sum_{\beta \in R_{+}}
\frac{\beta_i}{\langle \beta,x \rangle}
=
\sum_{\alpha, \beta \in R_{+}}
\frac
{\langle \alpha, \beta \rangle}
{\langle \alpha,x \rangle \langle \beta,x \rangle},
\end{align*}
which implies \eqref{harmonic_1}.
\end{proof}

\subsection{Dunkl processes and radial Dunkl processes}\label{subsec_dunkl}

In this section, we recall the definitions of Dunkl process and radial Dunkl process.
For details, we refer the reader to \cite{Du89, ChDeGaRoVoYo08, De09, GaYo05, Sc07}.

For a given vector $\xi \in \real^{d}$, a \emph{Dunkl operators} $T_{\xi}$ on $\real^{d}$ associated with $W$ is a differential-difference operator given by
\begin{align*}
T_{\xi}f(x)
:=
\frac
{\partial f(x)}
{\partial \xi}
+
\sum_{\alpha \in R_{+}}
k(\alpha)
\langle \alpha, \xi \rangle
\frac
{f(x)-f(\sigma_{\alpha}x)}
{\langle \alpha,x\rangle},
\end{align*}
where $\frac{\partial}{\partial \xi}$ is the directional derivative with respect to $\xi$.
A Dunkl process in $\real^d$ is a c\`adl\`ag Markov process with the \emph{infinitesimal generator} $\frac{1}{2}{\Delta_{k}f(x)}:=\frac{1}{2}\sum_{i=1}^{d} T_{\xi_{i}}^2$ for any orthonormal basis $\{\xi_{1},\ldots,\xi_{d}\}$, and it has the following explicit form
\begin{align*}
{\Delta_{k}f(x)}
=
\Delta f(x)
+
2\sum_{\alpha \in R_{+}}
k(\alpha)
\left\{
\frac{\langle \nabla f(x),\alpha \rangle}{\langle x, \alpha \rangle}
+
\frac{f(\sigma_{\alpha}x)-f(x)}{\langle \alpha,x \rangle^2}
\right\}.
\end{align*}

A \emph{radian Dunkl process}  $X=(X(t))_{t\geq 0}$ is a continuous Markov process with the infinitesimal generator $L_{k}^{W}/2$ which is defined by
\begin{align*}
\frac{L_{k}^{W}f(x)}{2}
:=&
\frac{\Delta f(x)}{2}
+
\sum_{\alpha \in R_{+}}
k(\alpha)
\frac{\langle \nabla f(x),\alpha \rangle}{\langle \alpha,x \rangle}.
\end{align*}
Moreover, $X$ can be considered as the  $W$-radial part of the Dunkl process $Y$, that is, for the canonical projection $\pi:\real^d \to \real^d/W$, $X=\pi(Y)$, as identifying the space $\real^d/W$ to Weyl chamber $\weyl$ of the root system $R$ (see \cite{GaYo05}).

The following theorem shows that the radial Dunkl process can be represented as a solution of an SDE.

\begin{Thm}
[\cite{Sc07}, \cite{De09}, Proposition 6.1, Lemma 6.4 and Corollary 6.6 in \cite{ChDeGaRoVoYo08}]
Let $X=(X(t))_{t \geq 0}$ be a radial Dunkl process with $X(0)=x(0) \in \weyl$.
Define $T_0:=\inf\{t>0~|~X(t) \in \partial \weyl\}$.
Suppose that for any $\alpha \in R$, $k(\alpha) \geq 1/2$.
Then $T_0=\infty$ a.s.
Moreover, there exists a Brownian motion $B=(B(t))_{t \geq 0}$ such that $X$ is a unique strong solution to the following SDE
\begin{align}
\rd X(t)
&
=
\rd B(t)
+
\sum_{\alpha \in R_{+}}
\frac{k(\alpha)}{\langle \alpha, X(t) \rangle}
\alpha
\rd t \label{RD_1}.
\end{align}
\end{Thm}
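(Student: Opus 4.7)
The plan hinges on the Lyapunov function $\overline{\delta}(x) := \log \prod_{\alpha \in R_+}\langle \alpha, x\rangle$ from Lemma \ref{Lem_harmonic_1}, which is smooth on $\weyl$ and diverges to $-\infty$ along $\partial \weyl$. First, using $\Delta \overline{\delta}(x) = -\sum_{\alpha \in R_+}|\alpha|^2/\langle \alpha, x\rangle^2$ together with the block structure coming from the irreducible decomposition \eqref{root_deco} (in which $k$ is constant on each component and inter-component inner products vanish, so \eqref{harmonic_1} applies block-by-block), I would compute
\[
\tfrac{1}{2}L_k^W \overline{\delta}(x) \;=\; \sum_{\alpha \in R_+}\!\left(k(\alpha)-\tfrac{1}{2}\right)\frac{|\alpha|^2}{\langle \alpha, x\rangle^2}\;\geq\; 0
\]
under the hypothesis $k(\alpha) \geq 1/2$; equivalently, $\overline{\delta}(X(\cdot))$ is a local submartingale.

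For the existence of the Brownian motion and the SDE representation, since $b(x) := \sum_{\alpha \in R_+} k(\alpha)\alpha/\langle\alpha,x\rangle$ is $C^\infty$ on the open set $\weyl$, the martingale problem characterization of $X$ with generator $\tfrac{1}{2}L_k^W$ yields a continuous $\real^d$-valued local martingale $M(t) := X(t) - x(0) - \int_0^t b(X(s))\,\rd s$ with quadratic covariation $t I_d$ on $[0, T_0)$; Lévy's characterization then identifies $M$ with a Brownian motion $B$ (on a possibly enlarged probability space), while the local Lipschitz property of $b$ on compact subsets of $\weyl$ furnishes pathwise uniqueness up to $T_0$ via the standard Yamada--Watanabe argument.

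The crux is the non-attainment $T_0 = \infty$ almost surely. Introducing the localizing sequence $\tau_n := \inf\{t > 0 : \min_{\alpha \in R_+}\langle\alpha,X(t)\rangle \leq 1/n\} \wedge \inf\{t > 0 : |X(t)| \geq n\}$, I would combine the nonnegativity from the generator calculation above with the a priori moment bound $\e|X(t\wedge\tau_n)|^2 \leq |x(0)|^2 + (d + 2\sum_{\alpha \in R_+} k(\alpha))t$ obtained from $\tfrac{1}{2}L_k^W|x|^2 = d + 2\sum_{\alpha \in R_+} k(\alpha)$, and then compare each semimartingale $\langle \alpha, X(\cdot)\rangle$---which has constant diffusion $|\alpha|$ and drift whose leading singular term equals $k(\alpha)|\alpha|^2/\langle\alpha,X\rangle$---to a Bessel process of dimension $2k(\alpha)+1 \geq 2$, which almost surely never hits zero. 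This forces $\p(\tau_n \leq T) \to 0$ as $n \to \infty$ for every $T > 0$, hence $T_0 = \infty$ a.s.

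The principal obstacle is precisely this non-attainment step: $\overline{\delta}$ is not globally bounded below on $\weyl$ (it decreases like $-|R_+|\log|x|$ at infinity), so a direct supermartingale/Markov estimate via $-\overline{\delta}$ alone cannot extract a usable tail bound, and one must handle the $|X|$-branch and the boundary branch of $\tau_n$ separately, and treat carefully the cross-terms appearing in the SDE for $\langle \alpha, X\rangle$. The decisive ingredient throughout is the $\Delta$-harmonicity of $\pi(x) = \prod_{\alpha \in R_+}\langle\alpha,x\rangle$ from Lemma \ref{Lem_harmonic_0}, which produces the sign-coherent cancellation yielding the nonnegative expression in the first display and, equivalently, the Bessel dimensional parameter $2k(\alpha)+1 \geq 2$.
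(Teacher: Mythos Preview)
The paper does not supply a proof of this theorem at all: it is quoted verbatim from the cited sources \cite{Sc07}, \cite{De09}, and Proposition~6.1, Lemma~6.4, Corollary~6.6 of \cite{ChDeGaRoVoYo08}, and the paper moves on immediately to use it. So there is no in-paper argument to compare your proposal against.

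That said, your sketch is in the spirit of the cited literature but is not yet a proof. Two concrete issues. First, the Bessel comparison for $\langle \alpha, X\rangle$ is where the real difficulty lies, and you have not resolved it: the drift of $\langle \alpha, X\rangle$ contains cross-terms $\sum_{\beta\neq\alpha} k(\beta)\langle\alpha,\beta\rangle/\langle\beta,X\rangle$, and since $\langle\alpha,\beta\rangle$ can be negative for $\alpha,\beta\in R_+$, these can work against you; a naive one-root-at-a-time comparison to a Bessel process of dimension $2k(\alpha)+1$ does not go through without an additional structural argument about root systems (this is exactly what \cite{De09} and \cite{ChDeGaRoVoYo08} supply, via the C\'epa--L\'epingle multivalued SDE machinery or via careful pairing of roots under reflections). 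Second, your submartingale computation $\tfrac12 L_k^W\overline{\delta}\geq 0$ is correct and pleasant, but by itself it does not rule out $\overline{\delta}(X(t))\to-\infty$; a submartingale unbounded from below can explode downward. You would need to combine it with the moment bound to control $\e[\overline{\delta}(X(t\wedge\tau_n))]$ from below uniformly in $n$, and in the borderline case $k(\alpha)=1/2$ (where your display vanishes identically) this route gives no information and a genuinely different argument is required---analogous to the delicate dimension-$2$ Bessel case.
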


We denote by $X^{\nu}$ a solution of SDE \eqref{RD_1} with $k(\alpha)=\nu(\alpha)+1/2$.

\subsection{Examples} 
\label{subsec_ex}

\subsubsection*{Bessel processes}
The finite set $R:=\{\pm 1\}$ is a root system in $\mathbb R$, and the fundamental Weyl chamber is given by $(0,\infty)$.
The corresponding radial Dunkl process $X$ satisfies the following SDE:
\begin{align*}
\rd X(t)
=
\rd B(t)
+
\frac{k}{X(t)}
\rd t,~
X(0)=x(0) \in (0,\infty).
\end{align*}
Therefore, $X$ is a Bessel process with parameter $k \geq 1/2$.

\subsubsection*{Type $A_{d-1}$}
The finite set $R:=\{e_i-e_j \in \real^d~;~i \neq j\} \subset \{x \in \real^d;\sum_{i=1}^{d}x_i=0\}$ is a root system in $\mathbb R^d$. 
A positive root system is given by $R_{+}=\{e_i-e_j~;~i<j\}$.
The fundamental Weyl chamber $\weyl_{A}$ is given by $\weyl_{A}=\{x \in \real^d~;~x_1>x_2>\cdots>x_d\}$. 
The corresponding a type $A$-radial Dunkl process $X$ satisfies the following SDE:
for each $i=1,\dots,d$,
\begin{align*}
\rd X_i(t)
=
\rd B_i(t)
+
\sum_{j:j \neq i}
\frac{k}{X_i(t)-X_j(t)}
\rd t,~
X(0)=x(0) \in \weyl_{A}.
\end{align*}
Therefore, $X$ is a Dyson's Brownian motion with parameter $k \geq 1/2$.


\subsubsection*{Type $B_{d}$, $C_{d}$, $D_{d}$}
Let $r \in \{0, 1,2 \}$ be fixed.
For each  $r\in \{1,2\}$, the finite set $$R(r):=\{e_i-e_j~;~i \neq j\}\cup \{\mathrm{sign}(j-i)(e_i+e_j)~;~i \neq j\} \cup \{\pm r e_i~;~i=1,\ldots,d\} \subset \real^{d}$$ is a root system.
A positive root system is given by $R_{+}(r)=\{e_i-e_j~;~i<j\}\cup\{e_i+e_j~;~i<j\}\cup\{re_i~;~i=1,\ldots,d\}$.
For $r=0$, the finite set $R(0):=\{e_i-e_j~;~i \neq j\}\cup \{\mbox{sign}(j-i)(e_i+e_j)~;~i \neq j\} \subset \real^{d}$ is a root system.
A positive root system is given by $R_{+}(0)=\{e_i-e_j~;~i<j\}\cup\{e_i+e_j~;~i<j\}$. 
$R(1)$, $R(2)$ and $R(0)$ are corresponding root systems for type $B_{d}$, $C_{d}$ and $D_{d}$, respectively.
The fundamental Weyl chambers $\weyl_{B}$, $\weyl_{C}$ and $\weyl_{D}$ are given by $\weyl_{B}=\weyl_{C}=\{x \in \real^d~;~x_1>x_2>\cdots>x_d>0\}$ and $\weyl_{D}=\{x \in \real^d~;~x_1>x_2>\cdots>|x_d|>0\}$, respectively. 
Therefore type $B$, $C$, $D$-radial Dunkl processs satisfy the following SDE:
for each $i=1,\dots,d$,
\begin{align}\label{RD_B_1}
\rd X_i(t)
=
+\rd B_i(t)
+
\frac{rk}{X_i(t)}
\rd t
+
\sum_{j:j \neq i}
k
\left\{
\frac{1}{X_i(t)-X_j(t)}
+
\frac{1}{X_i(t)+X_j(t)}
\right\}
\rd t,~
X(0)=x(0),
\end{align}
where $x(0) \in \weyl_{B}=\weyl_{C}$ if $r\in \{1,2\}$ and $x(0) \in \weyl_{D}$ if $r=0$.
Note that radial Dunkl processes \eqref{RD_B_1} are related to {\it Wishart processes}, (see e.g. \cite{Bru90,De07,GrMa13,GrMa14,KaTa04,KaTa11,KoOc01}).
Indeed, for any $x \in \weyl_{B}=\weyl_{C}$ or $\weyl_{D}$, it holds that
\begin{align*}
\sum_{j:j \neq i}
\left\{
\frac{1}{x_i-x_j}
+
\frac{1}{x_i+x_j}
\right\}
=
\frac{1}{x_{i}}
\sum_{j:j \neq i}
\left\{
\frac{x_i^2+x_j^2}{x_i^2-x_j^2}
\right\}
+
\frac{d-1}{x_{i}},
\end{align*}
thus we have by using It\^o's formula, the stochastic process $Y_{i}(t):=|X_{i}(t)|^{2}$ satisfies the equation
\begin{align}\label{Wishart_SDE}
Y_{i}(t)
=
|x_{i}(0)|^{2}
+
2
\int_{0}^{t}
\sqrt{Y_{i}(s)}
\rd B_{i}(s)
+
\{1+2k(d-1)+r\}t
+
2k
\sum_{j:j \neq i}
\int_{0}^{t}
\frac
{Y_{i}(s)+Y_{j}(s)}
{Y_{i}(s)-Y_{j}(s)}
\rd s,
\end{align}
which are special cases of Wishart processes.
Note that as an application of the main results, we also propose a numerical scheme for a solution of SDE \eqref{Wishart_SDE}.

\subsection{Change of measure}\label{subsec_abso}

We now recall the change of measure formula based on Girsanov's theorem, which was proved in \cite{Ch06} and \cite{ChDeGaRoVoYo08}.
For the convenience of the reader, we will give a proof below.
Recall that $X^{\nu}$ be a solution of SDE \eqref{RD_1} with $k=\nu+1/2$.

\begin{Lem}[\cite{Ch06} or Proposition 6.6.1 in \cite{ChDeGaRoVoYo08}]\label{Girsanov_RD_0}
Suppose that the multiplicity function $k$ satisfies $k=\nu+1/2 \geq 1/2$.
\begin{itemize}
\item[(i)]
Let $Z=(Z(t))_{t \in [0,T]}$ and $M=(M(t))_{t \in [0,T]}$ be stochastic processes defined by 
\begin{align*}
Z(t)
:=
\exp
\left(
	M(t)-\frac{1}{2} \langle M \rangle (t)
\right),~
M(t)
:=
\sum_{i=1}^{d}
\sum_{\alpha \in R_{+}}
\int_{0}^{t}
\frac{\nu(\alpha)}{\langle \alpha,X^{0}(s) \rangle}
\alpha_{i}
\rd B_i(s).
\end{align*}
Then $Z$ is a martingale and satisfies for any $t \in [0,T]$
\begin{align}\label{Girsanov_RD_1}
Z(t)
=
\prod_{\alpha \in R_{+}}
\frac
{\langle \alpha, X^{0}(t) \rangle^{\nu(\alpha)}}
{\langle \alpha, x(0) \rangle^{\nu(\alpha)}}
\exp
\left(
-\frac{1}{2}
\sum_{\alpha \in R_{+}}
\int_{0}^{t}
\frac
{
\nu(\alpha)^{2}
|\alpha|^{2}
}
{
\langle \alpha, X^{0}(s) \rangle^{2}
}
\rd s
\right).
\end{align}

\item[(ii)]
For any measurable function $g:C([0,T];\weyl) \to \real$,
\begin{align*}
\e[g(X^{\nu})]
=\e[g(X^{0}) Z(T)]
\end{align*}
provided that all the above expectations exist.
\end{itemize}
\end{Lem}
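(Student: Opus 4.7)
The plan is to identify the explicit form \eqref{Girsanov_RD_1} of $Z(t)$ via Itô's formula applied to the alternating log-polynomial $f(x):=\sum_{\alpha\in R_+}\nu(\alpha)\log\langle\alpha,x\rangle$, using the harmonicity identity \eqref{harmonic_1} to cancel the quadratic-variation and drift terms; once (i) is in hand, (ii) follows from the standard Girsanov theorem combined with the pathwise uniqueness of \eqref{RD_1}.

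For the closed-form identity in (i), both $\langle M\rangle(t)$ and the finite-variation part of $\rd f(X^0(t))$ are double sums of the shape $\sum_{\alpha,\beta\in R_+}\frac{c(\alpha,\beta)\langle\alpha,\beta\rangle}{\langle\alpha,x\rangle\langle\beta,x\rangle}$. Using the decomposition \eqref{root_deco} of $R$ into irreducible components $R^1,\ldots,R^r$, the $W$-invariance of $k$ (so $\nu\equiv\nu_j$ on each $R^j$), and the orthogonality $\langle\alpha,\beta\rangle=0$ for $\alpha\in R^i$, $\beta\in R^j$ with $i\neq j$, every such double sum decouples blockwise; applying \eqref{harmonic_1} to each irreducible $R^j$ on $\spn(R^j)$ then reduces it to a single sum. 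This gives
$$\langle M\rangle(t)=\sum_{\alpha\in R_+}\int_0^t\frac{\nu(\alpha)^2|\alpha|^2}{\langle\alpha,X^0(s)\rangle^2}\rd s,$$
matching the exponential factor in \eqref{Girsanov_RD_1}. Simultaneously, Itô's formula applied to $f(X^0(t))$ (with $X^0$ solving \eqref{RD_1} for $k\equiv 1/2$) produces a martingale part equal to $M(t)$ and a finite-variation part whose two contributions, $\tfrac12\sum_{\alpha'}\langle\nabla f,\alpha'\rangle/\langle\alpha',X^0\rangle$ and $\tfrac12\Delta f(X^0)=-\tfrac12\sum_\alpha\nu(\alpha)|\alpha|^2/\langle\alpha,X^0\rangle^2$, cancel by the same identity. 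Hence $\exp(M(t))=\prod_{\alpha\in R_+}\langle\alpha,X^0(t)\rangle^{\nu(\alpha)}/\langle\alpha,x(0)\rangle^{\nu(\alpha)}$, and multiplying by $\exp(-\tfrac12\langle M\rangle(t))$ yields \eqref{Girsanov_RD_1}.

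The main technical obstacle is upgrading $Z$, a priori only a positive local martingale, to a true martingale; Novikov's condition looks hopeless because of the $1/\langle\alpha,X^0\rangle^2$ singularity near $\partial\weyl$. I would localize by $\tau_n:=\inf\{t\geq 0:\min_{\alpha\in R_+}\langle\alpha,X^0(t)\rangle\leq 1/n\}$, on which the integrand of $M$ is bounded, so that $(Z(t\wedge\tau_n))_{t\in[0,T]}$ is a genuine exponential martingale with $\e[Z(T\wedge\tau_n)]=1$. Since $\nu(\alpha)\geq 0$ and the exponential factor in \eqref{Girsanov_RD_1} is bounded by $1$, the explicit form just derived yields the deterministic bound $Z(T\wedge\tau_n)\leq C(1+|X^0(T\wedge\tau_n)|)^{\sum_\alpha\nu(\alpha)}$. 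Because $|X^0(t)|^2$ has bounded drift $d+|R_+|$ (as $\langle X^0,\alpha\rangle/\langle\alpha,X^0\rangle=1$), $\sup_{t\leq T}|X^0(t)|$ has moments of all orders, so the family $(Z(T\wedge\tau_n))_n$ is uniformly integrable. Together with $\tau_n\to\infty$ a.s.\ under $\p$ (granted by the preceding theorem applied to $k=1/2\geq 1/2$) and $Z(T\wedge\tau_n)\to Z(T)$ a.s., this yields $\e[Z(T)]=1$ and promotes $Z$ to a true martingale on $[0,T]$.

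For (ii), under $\rd\mathbb{Q}:=Z(T)\rd\p$ Girsanov's theorem gives that $\widetilde B_i(t):=B_i(t)-\sum_{\alpha\in R_+}\int_0^t\nu(\alpha)\alpha_i/\langle\alpha,X^0(s)\rangle\rd s$, $i=1,\ldots,d$, is a standard $d$-dimensional Brownian motion on $[0,T]$. Substituting $\rd B_i=\rd\widetilde B_i+\bigl(\cdots\bigr)\rd t$ into the SDE for $X^0$ converts its drift into $\sum_{\alpha\in R_+}\frac{\nu(\alpha)+1/2}{\langle\alpha,X^0\rangle}\alpha$, so under $\mathbb{Q}$ the process $X^0$ solves exactly \eqref{RD_1} with multiplicity $\nu+1/2$. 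Pathwise uniqueness of \eqref{RD_1} (the theorem preceding this lemma) then identifies the law of $X^0$ under $\mathbb{Q}$ with that of $X^{\nu}$ under $\p$, giving $\e[g(X^{\nu})]=\e^{\mathbb{Q}}[g(X^0)]=\e[g(X^0)Z(T)]$, completing the argument.
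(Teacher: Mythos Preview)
Your proof is correct and follows essentially the same route as the paper: both arguments apply It\^o's formula to the weighted log-alternating polynomial, exploit the decomposition \eqref{root_deco} together with the constancy of $\nu$ on each irreducible component and the orthogonality between components to reduce the mixed double sums to the single sums handled by \eqref{harmonic_1}, and then deduce the explicit representation \eqref{Girsanov_RD_1}. For the martingale property, the paper bounds $\e[|Z(\tau)|^{p}]$ uniformly over all stopping times $\tau\leq T$ (invoking Proposition~IV.1.7 in Revuz--Yor), while you localize along $\tau_{n}$ and pass to the limit via uniform integrability; both approaches rest on the same key estimate, namely that \eqref{Girsanov_RD_1} dominates $Z$ by a polynomial in $|X^{0}|$, whose supremum has all moments. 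Your treatment of (ii) via Girsanov and uniqueness in law is more explicit than the paper's, which simply records the conclusion.
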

\begin{proof}
The idea of the proof is based on \cite{Yor80}.
We recall that the root system $R$ can be decomposed as an orthogonal disjoint irreducible root systems $R^{j}$, $j=1,\ldots,r$ (see \eqref{root_deco}).
Let $\overline{\delta}_j(x):=\log \prod_{\alpha \in R^{j}_{+}} \langle \alpha, x \rangle$  for $x \in \weyl$.
Then it follows from \eqref{harmonic_0}, $\overline{\delta}_j$ satisfies the equation
\begin{align*}
\Delta \overline{\delta}_j(x)
+
\sum_{\alpha \in R^{j}_{+}}
\frac{\langle \nabla \overline{\delta}_{j}(x),\alpha \rangle}{\langle \alpha,x \rangle}
=0.
\end{align*}
Therefore, applying the differential operator $L_{1/2}^{W}$ to $\overline{\delta}_{j}$, we obtain
\begin{align*}
L_{1/2}^{W} \overline{\delta}_{j}(x)
=
\sum_{\alpha \in R_{+} \setminus R^{j}_{+}}
\frac{\langle \nabla \overline{\delta}_{j}(x),\alpha \rangle}{\langle \alpha,x \rangle}
=
\sum_{\alpha \in R_{+} \setminus R^{j}_{+}}
\sum_{\beta \in R^{j}_{+}}
\frac
{\langle \alpha, \beta \rangle}
{\langle \alpha,x \rangle \langle \beta,x \rangle}
=0,
\end{align*}
where in the last equation, we use the fact that $R_{+}^{i}$ and $R_{+}^{j}$, $i \neq j$ are orthogonal.
Thus $\overline{\delta}_{j}$ is $L_{1/2}^{W}$ harmonic.

Recall that since Weyl groups $W(R^{j})$ generated by a root system $R^{j}$, $j=1,\ldots,r,$ act on the orthogonal subspaces $\spn(R^{j})$, respectively, thus it holds that $k(\alpha^{j})=k_{j}=\nu_{j}+1/2$ for each $\alpha^{j} \in R^{j}$, $j=1,\ldots,r$, and thus by using It\^o's formula, we have
\begin{align*}
&\log
\prod_{\alpha \in R_{+}}
\langle \alpha, X^{0}(t) \rangle^{\nu(\alpha)}
=
\sum_{j=1}^{r}
\nu_{j}
\overline{\delta}_{j}(X^{0}(t))\\
&=
\sum_{j=1}^{r}
\nu_{j}
\left\{
\overline{\delta}_{j}(x(0))
+\int_{0}^{t}
\frac{1}{2}L_{1/2}^{W}\overline{\delta}_{j}(X^{0}(s))
\rd s
+
\sum_{i=1}^{d}
\sum_{\alpha \in R^{j}_{+}}
\int_{0}^{t}
\frac{\alpha_i}{\langle \alpha,X^{0}(s) \rangle}
\rd B_i(s)
\right\}\\
&=
\log
\prod_{\alpha \in R_{+}}
\langle \alpha, x(0) \rangle^{\nu(\alpha)}
+M(t).
\end{align*}
Hence, the stochastic process $M$ satisfies
\begin{align*}
M(t)
&=
\log
\left(
\prod_{\alpha \in R_{+}}
\frac
	{\langle \alpha, X^{0}(t) \rangle^{\nu(\alpha)}}
	{\langle \alpha, x(0) \rangle^{\nu(\alpha)}}
\right).
\end{align*}
By using \eqref{harmonic_1}, for any $x \in \weyl$, we have
\begin{align*}
\sum_{\alpha, \beta \in R_{+}}
\frac
{\nu(\alpha)\nu(\beta)\langle \alpha, \beta \rangle}
{\langle \alpha,x \rangle \langle \beta,x \rangle}
=
\sum_{j=1}^{r}
\nu_{j}^{2}
\sum_{\alpha, \beta \in R_{+}^{j}}
\frac
{\langle \alpha, \beta \rangle}
{\langle \alpha,x \rangle \langle \beta,x \rangle}
=
\sum_{j=1}^{r}
\nu_{j}^{2}
\sum_{\alpha \in R_{+}^{j}}
\frac
{|\alpha|^2}
{\langle \alpha,x \rangle^2}
=
\sum_{\alpha \in R_{+}}
\frac
{\nu(\alpha)^{2}|\alpha|^2}
{\langle \alpha,x \rangle^2}.
\end{align*}
Hence, the quadratic variation $\langle M \rangle$ of $M$ satisfies
\begin{align*}
\langle M \rangle(t)
=
\sum_{\alpha,\beta \in R_{+}}
\int_{0}^{t}
\frac{\nu(\alpha)\nu(\beta)\langle \alpha,\beta \rangle}{\langle \alpha,X^{0}(s) \rangle \langle \beta,X^{0}(s)\rangle}
\rd s
=
\sum_{\alpha \in R_{+}}
\int_{0}^{t}
\frac
{
\nu(\alpha)^{2}
|\alpha|^{2}
}
{
\langle \alpha, X^{0}(s) \rangle^{2}
}
\rd s.
\end{align*}
Thus $Z$ satisfies \eqref{Girsanov_RD_1}.

Now, we prove $Z$ is a martingale.
Since by definition, $Z$ is a local martingale. Thus it is sufficient to prove that a family of random variables $\{Z(\tau)~;~\tau \text{ is a stopping time less than } T\}$ is uniformly integrable (see Proposition 1.7 in chapter IV \cite{ReYo99}).
Let $\tau$ be a stopping time with $\tau \leq T$ and $p>1$.
From the representation \eqref{Girsanov_RD_1}, Schwarz's inequality and Lemma \ref{moment_0} with $\nu=0$ and $b=0$, we have
\begin{align*}
\e[|Z(\tau)|^p]
\leq
\e\left[
\prod_{\alpha \in R_{+}}
\frac
	{
		|\alpha|^{p \nu}
		\sup_{0\leq t \leq T}
		|X^{0}(t)|^{p \nu(\alpha)}
	}
	{
		|\langle \alpha,x(0) \rangle|^{p \nu(\alpha)}
	}
\right]
\leq
C_{p}
\end{align*}
for some $C_p$, independent from $\tau$.
This implies uniformly integrability, and thus, we conclude $Z$ is a martingale.
\end{proof}

\section{Radial Dunkl processes with drift} \label{sec:RDPwD} 
\subsection{The existence and uniqueness}\label{subsec_dunkl_b}

In this section, we consider radial Dunkl processes with drift coefficient $b:[0, T] \times \weyl \to \real^d$, satisfying 
\begin{align}\label{RD_SDE}
X(t)
&
=
x(0)
+
B(t)
+
\sum_{\alpha \in R_{+}}
\int_{0}^{t}
\frac{k(\alpha)}{\langle \alpha, X(s) \rangle}
\alpha
\rd s
+
\int_{0}^{t}
b(s,X(s))
\rd s,~
x(0)\in\weyl,~
t \in [0,T].
\end{align}
We denote by $X^{\nu,b}$ a solution of SDE \eqref{RD_SDE} with $k=\nu+1/2$, in particular $X^{\nu,0}=X^{\nu}$.

For multiplicity function $k:R \to \real$, we define a function $f_{k}:\weyl \to \real^d$ by
\begin{align}\label{def_f}
f_{k}(x)
=(f_{k,1}(x),\ldots,f_{k,d}(x))^{\top}
:=
\sum_{\alpha \in R_{+}}
\frac{k(\alpha)}{\langle \alpha, x \rangle}\alpha.
\end{align}
Then $f_k$ is one-sided Lipschitz continuous on $\weyl$.
Indeed, for any $x,y \in \weyl$, it holds that
\begin{align}\label{f_OSL}
\langle x-y, f_{k}(x)-f_{k}(y) \rangle
&=
\sum_{i=1}^{d}
(x_i-y_i)
\sum_{\alpha \in R_{+}}
k(\alpha) \alpha_{i}
	\left\{
		\frac{1}{\langle \alpha, x \rangle}
		-
		\frac{1}{\langle \alpha, y \rangle}
	\right\} \notag\\
&=
-
\sum_{\alpha \in R_{+}}
k(\alpha)
\frac
{\langle \alpha, x-y \rangle^2}
{\langle \alpha, x \rangle \langle \alpha, y \rangle}
\leq 0.
\end{align}

Now we prove the existence and uniqueness of radial Dunkl processes with bounded drift \eqref{RD_SDE} by using Girsanov's theorem and the one-sided Lipschitz property of $f_k$.

\begin{Thm}\label{Lem_Gir_drift_0}
Let $T>0$ be fixed.
Suppose that the multiplicity function $k$ satisfies $k=\nu+1/2 \geq 1/2$ and $b:[0,T] \times \weyl \to \real^d$ is bounded measurable.

\begin{itemize}
\item[(i)]
There exists a weak solution in $\weyl$, and uniqueness in law holds for the  SDE \eqref{RD_SDE}.
In particular, it holds that for any measurable function $g:C([0,T];\weyl) \to \real$,
\begin{align}\label{Lem_Gir_drift_1}
\e[g(X^{\nu,b})]
=
\e[g(X^{\nu}) \widetilde{Z}_1(T)],
\end{align}
provided that all the above expectations exist, where for $q \in \real$, $\widetilde{Z}_q=(\widetilde{Z}_q(t))_{t \in [0,T]}$ is a martingale defined by
\begin{align*}
\widetilde{Z}_q(t)
:=
\exp
\left(
	q \widetilde{M}(t)-\frac{q^2}{2} \langle \widetilde{M} \rangle (t)
\right),~
\widetilde{M}(t)
:=
\sum_{i=1}^{d}
\int_{0}^{t}
b_i(s,X^{\nu}(s))
\rd B_i(s).
\end{align*}

\item[(ii)]
If the map $\weyl \ni x \mapsto b(t,x)$ is one-sided Lipschitz, that is, there exists $K>0$ such that for any $x,y \in \weyl$ and $t \in [0, T]$, $\langle x-y, b(t,x)-b(t,y) \rangle \leq K|x-y|^2$, then the pathwise uniqueness holds.

\end{itemize}
\end{Thm}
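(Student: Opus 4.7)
The plan is to obtain a weak solution via a Girsanov transform starting from the radial Dunkl process $X^{\nu}$. Since $b$ is bounded, the exponent $\frac{q^{2}}{2}\langle \widetilde{M}\rangle(T) \leq \frac{q^{2}}{2}\|b\|_{\infty}^{2}T$ is deterministic and bounded, so Novikov's criterion holds and $\widetilde{Z}_{q}$ is a true martingale on $[0,T]$ for every $q\in\real$. Defining $\rd\widetilde{\p} := \widetilde{Z}_{1}(T)\rd\p$, Girsanov's theorem gives that
\begin{align*}
\widetilde{B}(t) := B(t) - \int_{0}^{t} b(s, X^{\nu}(s))\rd s
\end{align*}
is a Brownian motion under $\widetilde{\p}$. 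Substituting into SDE \eqref{RD_1}, the pair $(X^{\nu}, \widetilde{B})$ under $\widetilde{\p}$ solves \eqref{RD_SDE}. Since $\widetilde{\p} \sim \p$ and $X^{\nu}$ stays in $\weyl$ $\p$-a.s.\ by the theorem of Section~\ref{subsec_dunkl} (using $k\geq 1/2$), it also stays in $\weyl$ $\widetilde{\p}$-a.s. The formula \eqref{Lem_Gir_drift_1} is then the standard identity $\e[g(X^{\nu,b})] = \e^{\widetilde{\p}}[g(X^{\nu})] = \e[g(X^{\nu})\widetilde{Z}_{1}(T)]$.

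For uniqueness in law, I would argue by reversing the Girsanov transform. Given any weak solution $(Y, B')$ of \eqref{RD_SDE} on a filtered probability space $(\Omega',\mathscr{F}',\p')$, consider the exponential martingale built from $-\sum_{i=1}^{d}\int_{0}^{\cdot} b_{i}(s, Y(s))\rd B'_{i}(s)$; boundedness of $b$ again ensures it is a true martingale. Under the new measure $\widetilde{\p}'$, the driving process $B'(t)+\int_{0}^{t}b(s,Y(s))\rd s$ is a Brownian motion and $Y$ solves the driftless SDE \eqref{RD_1}. Since uniqueness in law holds for the radial Dunkl process (Theorem in Section~\ref{subsec_dunkl}), the law of $Y$ under $\widetilde{\p}'$ is uniquely determined, and reversing the change of measure via the Radon--Nikodym derivative $\widetilde{Z}_{1}(T)$ (which is a functional of the path $Y$ alone by \eqref{Girsanov_RD_1} and the formula for $\widetilde{M}$) determines the law of $Y$ under $\p'$.

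\textbf{Approach for part (ii).} Let $X$ and $Y$ be two strong solutions of \eqref{RD_SDE} on the same probability space, with the same Brownian motion $B$ and the same starting point $x(0)\in\weyl$; both take values in $\weyl$ by part (i). Because the diffusion coefficient is the identity, the Brownian components cancel and $D(t) := X(t)-Y(t)$ is a continuous process of finite variation with
\begin{align*}
D(t) = \int_{0}^{t}\bigl[f_{k}(X(s)) - f_{k}(Y(s))\bigr]\rd s + \int_{0}^{t}\bigl[b(s,X(s)) - b(s,Y(s))\bigr]\rd s.
\end{align*}
Applying the chain rule to $|D(t)|^{2}$ and invoking the one-sided Lipschitz estimate \eqref{f_OSL} for $f_{k}$ together with the one-sided Lipschitz hypothesis on $b$ gives
\begin{align*}
|D(t)|^{2} \leq 2K\int_{0}^{t}|D(s)|^{2}\rd s.
\end{align*}
Since $|D(0)|^{2}=0$, Gronwall's lemma forces $D\equiv 0$, which is pathwise uniqueness.

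\textbf{Main obstacle.} The only genuinely delicate point is in the uniqueness-in-law step of (i): one must verify that the reversing exponential martingale built from $-b(s,Y(s))$ along an \emph{a priori} weak solution $Y$ is a true martingale (not merely a local martingale), and that the whole Radon--Nikodym density can be expressed as a measurable functional of the path of $Y$ so that transferring the law back and forth is well-defined. Boundedness of $b$ via Novikov's criterion handles the first issue, and the explicit pathwise representation \eqref{Girsanov_RD_1} combined with the definition of $\widetilde{Z}_{1}$ handles the second, so in the end the argument reduces to a clean bootstrap off the known uniqueness for the driftless radial Dunkl SDE.
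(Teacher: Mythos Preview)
Your proposal is correct and follows essentially the same route as the paper: Novikov's criterion (via boundedness of $b$) plus Girsanov's theorem for weak existence, uniqueness in law, and the change-of-measure identity in part (i); and It\^o's formula combined with the one-sided Lipschitz properties of $f_{k}$ and $b$ followed by Gronwall's inequality for pathwise uniqueness in part (ii). The paper's own proof is much terser on (i), simply asserting that ``weak existence and uniqueness in law follow from the Girsanov transformation,'' whereas you spell out the reverse-Girsanov argument for uniqueness in law; this extra detail is correct and standard. One minor slip: in your final paragraph you invoke \eqref{Girsanov_RD_1}, but that formula concerns the density $Z$ for the change from $X^{0}$ to $X^{\nu}$, not $\widetilde{Z}_{1}$; the point you actually need---that $\widetilde{Z}_{1}(T)$ is a measurable functional of the path---follows instead from writing $\rd B_{i}(s)=\rd X^{\nu}_{i}(s)-f_{k,i}(X^{\nu}(s))\rd s$ in the definition of $\widetilde{M}$.
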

\begin{proof}
Since $b$ is bounded, $\sup_{0\leq t \leq T}\e[\exp(\frac{q^2}{2}  \langle \widetilde{M} \rangle (t))]<\infty$ for any $q \in \real$, thus by Novikov's criterion, $\widetilde{Z}_q$ is a martingale.
Therefore, the weak existence and uniqueness in law follow from the Girsanov transformation, and \eqref{Lem_Gir_drift_1} holds.

Now, we prove the pathwise uniqueness.
Suppose that $b$ is one-sided Lipschitz, and let $X$ and $Y$ be two solutions of SDE \eqref{RD_SDE} with $X(0)=Y(0)=x(0)$, driven by the same Brownian motion $B$.
Then since both $b(t,\cdot)$ and $f_{\alpha}$ satisfy one-sided Lipschitz condition,  by using It\^o's formula it holds that for any $t \in [0,T]$
\begin{align*}
|X(t)-Y(t)|^2
&=
\int_{0}^{t}
\langle
	X(s)-Y(s),
	\{f_k(X(s))-f_k(Y(s))\}
	+
	\{b(s,X(s))-b(s,Y(s))\}
\rangle
\rd s\\
&\leq
K
\int_{0}^{t}
|X(s)-Y(s)|^2
\rd s.
\end{align*}
Therefore, Gronwall's inequality implies the statement.
\end{proof}

\begin{Eg}[Radial Heckman--Opdam process \cite{Sc07}]
Let $X$ be a radial Heckman--Opdam process with measurable drift $\widetilde{b}:[0,T]\times \weyl_{A} \to \real^{d}$, of the form
\begin{align}\label{hy_SDE_0}
\rd X(t)
=
\rd B(t)
+
\sum_{\alpha \in R_{+}}
\frac{k(\alpha)}{2}
\coth\left(\frac{\langle \alpha, X(t)\rangle}{2}\right)
\alpha
\rd t
+
\widetilde{b}(t,X(t))
\rd t,~
X(0)=x(0) \in \weyl,
\end{align}
(see also \cite{CeLe01,GrMa14} for hyperbolic particle systems).
By Taylor's expansion of $\coth x$ on $(0,\pi)$, it holds that
\begin{align*}
\coth x
=
\frac{e^{x}+e^{-x}}{e^{x}-e^{-x}}
=
\frac{1}{x}
+
\sum_{n=1}^{\infty} \frac{2^{2n}B_{2n}x^{2n-1}}{(2n)!},
\end{align*}
where $B_n$ is the $n$-th Bernoulli number defined as $B_0=0$ and
$
B_n=-\frac{1}{n+1} \sum_{k=0}^{n-1}
\begin{pmatrix}
n+1\\k
\end{pmatrix}
B_k$.
This shows that the map $(0,\infty) \ni x \mapsto\coth x - \frac{1}{x}$ is smooth and bounded.
Therefore, in Corollary \ref{Lem_Gir_drift_0}, we choose the drift coefficient $b$ as
\begin{align*}
b(t,x)
:=
\sum_{\alpha \in R_{+}}
\frac{k(\alpha)}{2}
\left\{
\coth\left(\frac{\langle \alpha, x \rangle}{2}\right)
-
\frac{2}{\langle \alpha, x \rangle}
\right\}
\alpha
+
\widetilde{b}(t,x),
\end{align*}
and then we conclude that if $\widetilde{b}$ is bounded and one-sided Lipschitz, then SDE \eqref{hy_SDE_0} has a unique strong solution valued in $\weyl$.
\end{Eg}

\subsection{Moments and inverse moments}\label{subsec_moment}

\subsubsection*{Moment estimates}

\begin{Lem}\label{moment_0}
Suppose that the multiplicity function $k$ satisfies $k(\alpha)=\nu(\alpha)+1/2 \geq 1/2$ and $b:[0,T] \times \weyl \to \real^d$ is a bounded measurable function.
Then for any $p>0$, there exists $C_p \in (0, +\infty)$ such that
\begin{align*}
\e\left[
\sup_{0\leq t \leq T}
|X^{\nu,b}(t)|^{p}
\right]
\leq
C_p.
\end{align*}
\end{Lem}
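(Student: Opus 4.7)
The plan is to apply It\^o's formula directly to $|X^{\nu,b}(t)|^{2p}$ for integer $p\geq 1$ and then reduce to general $p>0$ by Jensen's inequality. The structural fact that makes this work despite the singular drift $f_{k}$ is the identity
\begin{align*}
\langle x, f_{k}(x) \rangle
=
\sum_{\alpha \in R_{+}}
\frac{k(\alpha)}{\langle \alpha, x \rangle}\langle x, \alpha \rangle
=
\sum_{\alpha \in R_{+}} k(\alpha)
=: K,
\end{align*}
which is a constant on all of $\weyl$. Thus, although $f_{k}$ blows up on $\partial\weyl$, its radial part $\langle x, f_{k}(x) \rangle$ is uniformly bounded and contributes only an $O(|x|^{2p-2})$ term to $d(|X^{\nu,b}|^{2p})$.

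First, for integer $p\geq 1$, using $\nabla|x|^{2p}=2p|x|^{2p-2}x$ and $\Delta|x|^{2p}=2p(d+2p-2)|x|^{2p-2}$, It\^o's formula gives
\begin{align*}
|X^{\nu,b}(t)|^{2p}
=
|x(0)|^{2p}
+
M(t)
+
\int_{0}^{t}
\bigl(
2p K + p(d+2p-2) + 2p \langle X^{\nu,b}(s), b(s,X^{\nu,b}(s))\rangle
\bigr)|X^{\nu,b}(s)|^{2p-2}
\rd s,
\end{align*}
where $M(t):=2p\int_{0}^{t}|X^{\nu,b}(s)|^{2p-2}\langle X^{\nu,b}(s), \rd B(s)\rangle$ is a continuous local martingale. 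Using $|\langle x, b(s,x) \rangle|\leq \|b\|_{\infty}|x|$ and Young's inequality on the resulting powers $|X^{\nu,b}|^{2p-2}$ and $|X^{\nu,b}|^{2p-1}$, the finite variation part is dominated by $C_{1}(1+|X^{\nu,b}(s)|^{2p})\rd s$.

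Next, I would introduce the localising stopping times $\tau_{n}:=\inf\{t\in[0,T]~:~|X^{\nu,b}(t)|\geq n\}\wedge T$, take $\sup_{s\leq t\wedge\tau_{n}}$, and expectation. To handle $M$, I would apply the Burkholder--Davis--Gundy inequality together with the bound $\langle M\rangle(T)\leq 4p^{2}T\sup_{s\leq T}|X^{\nu,b}(s)|^{4p-2}$, then use Young's inequality $ab\leq \varepsilon a^{2}+C_{\varepsilon}b^{2}$ to produce $\tfrac{1}{2}\e[\sup_{s\leq t\wedge\tau_{n}}|X^{\nu,b}(s)|^{2p}]$, which can be absorbed on the left-hand side. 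Gronwall's inequality on the resulting estimate yields $\e[\sup_{t\leq\tau_{n}}|X^{\nu,b}(t)|^{2p}]\leq C_{p}$ uniformly in $n$, and Fatou's lemma together with $\tau_{n}\uparrow T$ almost surely (from the existence result in Theorem \ref{Lem_Gir_drift_0}, since $X^{\nu,b}$ is a.s.\ continuous and takes values in $\weyl\subset\real^{d}$) delivers the claim for integer $p\geq 1$. For general $p>0$, Jensen's inequality applied with any integer $m\geq p/2$ gives $\e[\sup|X^{\nu,b}|^{p}]\leq \e[\sup|X^{\nu,b}|^{2m}]^{p/(2m)}$.

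The main obstacle will be the interplay of BDG, Young, and Gronwall: the quadratic variation produces powers $4p-2$ which exceed $2p$, so the $\sup$ must be factored out carefully and the remaining quantity balanced by $\varepsilon$ before absorption. An alternative route — combining the change-of-measure identities from Lemma \ref{Girsanov_RD_0} and Theorem \ref{Lem_Gir_drift_0} with a moment estimate for $X^{0}$ — would also work, but the direct It\^o approach exploits the constancy of $\langle x, f_{k}(x)\rangle$ and avoids estimating Radon--Nikodym densities.
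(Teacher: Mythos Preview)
Your proposal is correct and follows essentially the same approach as the paper: the key observation $\langle x, f_k(x)\rangle = \sum_{\alpha\in R_+}k(\alpha)$, localization, BDG, Gronwall, and Fatou, with Jensen/H\"older to cover general $p>0$. The only cosmetic difference is that the paper applies It\^o's formula to $|X^{\nu,b}|^2$ and then raises the resulting pointwise inequality to the $q$-th power (working with $p=2q\geq 4$), whereas you apply It\^o directly to $|X^{\nu,b}|^{2p}$; both routes lead to the same Gronwall estimate.
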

\begin{proof}
Let $\tau_N:=\inf\{t>0~;~|X^{\nu,b}_t| \geq N\}$.
Thanks to H\"older's inequality, it is sufficient to prove the statement for $p=2q \geq 4$.
Since $\langle x, f_k(x) \rangle=\sum_{\alpha \in R_{+}} k(\alpha)=:\gamma_{k}$ for any $x \in \weyl$ and $b$ is bounded, by using It\^o's formula and Schwarz's inequality
\begin{align*}
|X^{\nu,b}(t)|^{2}
&=
|x(0)|^{2}
+
2\int_{0}^{t}
\langle
	X^{\nu,b}(s),
	f_k(X^{\nu,b}(s))
	+
	b(s,X^{\nu,b}(s))
\rangle
\rd s
+
dt
+
2\sum_{i=1}^{d}
\int_{0}^{t}
X^{\nu,b}_i(s)
\rd B_i(s)\\
&\leq
|x(0)|^{2}
+
(2\gamma_{k}+d)T
+
2\|b\|_{\infty}
\int_{0}^{t}
|X^{\nu,b}(s)|
\rd s
+
2
\left|
\sum_{i=1}^{d}
\int_{0}^{t}
	X^{\nu,b}_i(s)
\rd B_i(s)
\right|.
\end{align*}
Therefore, by taking supremum and expectation, it follows from Burkholder-Davis-Gundy's inequality that 
\begin{align*}
&
4^{1-q}\e\left[
	\sup_{0\leq u \leq t}|X^{\nu,b}(u \wedge \tau_N)|^{2q}
\right]\\
&\leq
|x(0)|^{2q}
+
(2\gamma_{k}+d)^qT^q
+
(2\|b\|_{\infty})^{q}T^{q-1}
\int_{0}^{t}
\e\left[
	\sup_{0\leq u \leq s}|X^{\nu,b}(u\wedge \tau_N)|^{q}
\right]
\rd s\\
&\quad
+
(2d)^{q}T^{\frac{q}{2}-1}c_q
\int_{0}^{t}
	\e\left[
		\sup_{0\leq u \leq s}
		\left|
			X^{\nu,b}(u \wedge \tau_N)
		\right|^{2q}
	\right]
\rd s.
\end{align*}
Since $|x|^{q} \leq 1+|x|^{2q}$ for any $x \in \real^d$, thus Gronwall's inequality shows that $\e[\sup_{0\leq t \leq T} |X^{\nu,b}(t \wedge \tau_N)|^{2q}] \leq C_p$, for some $C_p\in (0, +\infty),$ which does not depend on $N$.
By taking $N \to \infty$ and applying Fatou's lemma, we conclude the assertion.
\end{proof}

\subsubsection*{Inverse moment estimates}

In order to estimate inverse moment $\displaystyle \sup_{0\leq t \leq T}\e[\langle \alpha, X^{\nu,b}(t) \rangle^{-p}]$ and $\e[\displaystyle \sup_{0\leq t \leq T} \langle \alpha, X^{\nu,b}(t) \rangle^{-p}]$ for some $p>0$, we apply the change of measure formula given in Lemma \ref{Girsanov_RD_0}.

\begin{Lem}\label{inv_moment_0}
Suppose that the multiplicity function $k$ satisfies $k(\alpha)=\nu(\alpha)+1/2 \geq 1/2$.
Let $\alpha \in R_{+}$.
Then, it holds that
\begin{align}
\label{inv_moment_1}
\sup_{0\leq t \leq T}
\e\left[
\langle \alpha, X^{\nu}(t) \rangle^{-p}
\right]
&\leq
\frac
{C_p}
{\prod_{\beta \in R_{+}}\langle \beta, x(0) \rangle^{\nu(\beta)}}
\text{ if } p \in (0,\nu(\alpha)],
\\
\label{inv_sup_moment_1}
\e\left[
\sup_{0\leq t \leq T}
\langle \alpha, X^{\nu}(t) \rangle^{-p}
\right]
&\leq
\frac{C_{p}}{\langle \alpha, x(0) \rangle^{p}}
+
\frac{C_{p}}{\prod_{\beta \in R_{+}}\langle \beta, x(0) \rangle^{\nu(\beta)}}
\text{ if } p \in \Big[2,\frac{\min_{\beta \in R_{+}}\nu(\beta)}{3}\Big],
\end{align}
for some $C_{p}>0$.
Moreover, if $b:[0,T] \times \weyl \to \real^d$ is bounded and measurable, then it holds that
\begin{align}
\sup_{0\leq t \leq T}
\e\left[
\langle \alpha, X^{\nu,b}(t) \rangle^{-p}
\right]
&\leq
\frac
{C_p}
{\prod_{\beta \in R_{+}}\langle \beta, x(0) \rangle^{\frac{p \nu (\beta)}{\nu(\alpha)}}}
\text{ if } p \in (0,\nu(\alpha)),
\label{inv_moment_2}
\\
\e\left[
\sup_{0\leq t \leq T}
\langle \alpha, X^{\nu,b}(t) \rangle^{-p}
\right]
&\leq
\frac{C_{p}}{\langle \alpha, x(0) \rangle^{p}}
+
\frac{C_{p}}{\prod_{\beta \in R_{+}}\langle \beta, x(0) \rangle^{\frac{p\nu(\beta)}{\nu(\alpha)}}}
\text{ if } p \in \Big[2,\frac{\min_{\beta \in R_{+}}\nu(\beta)}{3}\Big).
\label{inv_sup_moment_2}
\end{align}
\end{Lem}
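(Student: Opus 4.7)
The plan is to prove the four bounds in the order \eqref{inv_moment_1}, \eqref{inv_sup_moment_1}, \eqref{inv_moment_2}, \eqref{inv_sup_moment_2}, deducing the drifted versions from the undrifted ones via a combination of Girsanov and It\^o/BDG arguments.

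For \eqref{inv_moment_1}, I would apply the change of measure from Lemma~\ref{Girsanov_RD_0}(ii) to the fixed-time functional $g(X)=\langle\alpha,X(t)\rangle^{-p}$ and substitute the explicit form \eqref{Girsanov_RD_1} of $Z(t)$. After cancellation the expectation becomes
\begin{align*}
\e\!\left[\langle\alpha,X^{\nu}(t)\rangle^{-p}\right]
&=\frac{1}{\prod_{\beta\in R_+}\langle\beta,x(0)\rangle^{\nu(\beta)}}
\e\!\Big[\langle\alpha,X^{0}(t)\rangle^{\nu(\alpha)-p}\prod_{\beta\neq\alpha}\langle\beta,X^{0}(t)\rangle^{\nu(\beta)}\\
&\qquad\times\exp\!\Big(-\tfrac{1}{2}\int_0^t\sum_{\gamma\in R_+}\tfrac{\nu(\gamma)^2|\gamma|^2}{\langle\gamma,X^{0}(s)\rangle^2}\rd s\Big)\Big].
\end{align*}
Since $p\le\nu(\alpha)$ and every $\nu(\beta)\ge 0$, all the exponents on the right are nonnegative and the exponential factor is at most one. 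Bounding $\langle\beta,X^{0}(t)\rangle\le|\beta|\,|X^{0}(t)|$ and invoking Lemma~\ref{moment_0} for $X^{0}$ (the case $\nu\equiv 0$, $b\equiv 0$) closes the estimate.

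For \eqref{inv_sup_moment_1}, abbreviate $Y(t):=\langle\alpha,X^{\nu}(t)\rangle$ and apply It\^o to $Y^{-p}$, isolating the $\beta=\alpha$ contribution in the drift so that it combines with the It\^o correction. A direct calculation yields
\begin{align*}
Y(t)^{-p}
&=Y(0)^{-p}-p\int_0^t Y(s)^{-p-1}\langle\alpha,\rd B(s)\rangle
-\frac{p|\alpha|^2(2\nu(\alpha)-p)}{2}\int_0^t Y(s)^{-p-2}\rd s\\
&\qquad-p\sum_{\beta\neq\alpha}k(\beta)\langle\alpha,\beta\rangle\int_0^t\frac{Y(s)^{-p-1}}{\langle\beta,X^{\nu}(s)\rangle}\rd s.
\end{align*}
Since $p\le 2\nu(\alpha)$, the third summand is nonpositive and can be discarded. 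Taking $\sup_{t\le T}$ and expectation (with the usual localizing stopping times), the stochastic integral is controlled by Burkholder--Davis--Gundy and Jensen via $C\sqrt{T\sup_{t}\e[Y(t)^{-(2p+2)}]}$, which is finite by \eqref{inv_moment_1} because $2p+2\le\nu(\alpha)$. The cross drift is split by Young's inequality $Y^{-p-1}\langle\beta,X^{\nu}\rangle^{-1}\le\frac{p+1}{p+2}Y^{-(p+2)}+\frac{1}{p+2}\langle\beta,X^{\nu}\rangle^{-(p+2)}$ and then controlled by \eqref{inv_moment_1} with exponent $p+2\le\min_\beta\nu(\beta)$. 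The boundary value $Y(0)^{-p}=\langle\alpha,x(0)\rangle^{-p}$ delivers the first summand of the target bound, while the remaining contributions pool into $C_p/\prod_\beta\langle\beta,x(0)\rangle^{\nu(\beta)}$.

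For \eqref{inv_moment_2}, invoke the drifted change of measure \eqref{Lem_Gir_drift_1} of Theorem~\ref{Lem_Gir_drift_0}: $\e[g(X^{\nu,b})]=\e[g(X^{\nu})\widetilde Z_1(T)]$. Boundedness of $b$ and Novikov's criterion guarantee $\e[\widetilde Z_1(T)^{q'}]<\infty$ for every $q'\ge 1$. Applying H\"older with conjugate exponents $q=\nu(\alpha)/p>1$ (legitimate since $p<\nu(\alpha)$) gives $\e[\langle\alpha,X^{\nu,b}(t)\rangle^{-p}]\le \e[\langle\alpha,X^{\nu}(t)\rangle^{-pq}]^{1/q}\e[\widetilde Z_1(T)^{q'}]^{1/q'}$, and substituting \eqref{inv_moment_1} with $pq=\nu(\alpha)$ yields the exponent $\nu(\beta)/q=p\nu(\beta)/\nu(\alpha)$. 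For \eqref{inv_sup_moment_2}, the same It\^o argument as for \eqref{inv_sup_moment_1} goes through verbatim with $Y^b(t):=\langle\alpha,X^{\nu,b}(t)\rangle$ (inheriting only one additional drift term $-p\int Y^b(s)^{-p-1}\langle\alpha,b(s,X^{\nu,b}(s))\rangle\rd s$, which is bounded using $\|b\|_\infty$); the inverse moments demanded by BDG and Young are supplied by \eqref{inv_moment_2}, and the strict range $p\in[2,\min_\beta\nu(\beta)/3)$ together with $p<\nu(\alpha)$ guarantees that every exponent invoked falls inside the strict hypothesis of \eqref{inv_moment_2}.

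The main obstacle is the sup-estimate \eqref{inv_sup_moment_1}: one must first isolate the single nonpositive drift term produced by It\^o so as to absorb the singular behavior near $\{\langle\alpha,x\rangle=0\}$, and then channel every remaining inverse moment through \eqref{inv_moment_1}, whose range of validity is only $(0,\nu(\cdot)]$. The hypothesis $p\in[2,\min_\beta\nu(\beta)/3]$ is precisely what simultaneously secures $2p+2\le\nu(\alpha)$ (needed for Burkholder--Davis--Gundy on the stochastic integral) and $p+2\le\min_\beta\nu(\beta)$ (needed after Young on the cross drift).
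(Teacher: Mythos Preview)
Your proposal is correct. For \eqref{inv_moment_1} and \eqref{inv_moment_2} your argument coincides with the paper's: change of measure to $X^{0}$ (resp.\ $X^{\nu}$), drop the exponential, bound $\langle\beta,X^{0}(t)\rangle\le|\beta|\,|X^{0}(t)|$, and finish with Lemma~\ref{moment_0} (resp.\ H\"older plus the martingale property of $\widetilde Z_q$).

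For \eqref{inv_sup_moment_1} and \eqref{inv_sup_moment_2} you take a different route from the paper. You apply It\^o directly to $\langle\alpha,X^{\nu}(t)\rangle^{-p}$ and exploit the nonpositivity of the combined $\beta=\alpha$ drift plus It\^o correction (coefficient $-\tfrac{p}{2}(2\nu(\alpha)-p)|\alpha|^{2}$), whereas the paper applies It\^o to $Y_{\alpha}^{\nu}(t):=\langle\alpha,X^{\nu}(t)\rangle^{-1}$ and then raises the resulting identity to the $p$-th power via $|\sum_i a_i|^{p}\le m^{p-1}\sum_i|a_i|^{p}$, discarding all sign information. Your approach thereby needs only the exponents $2p+2$ and $p+2$ when invoking \eqref{inv_moment_1}, while the paper needs $3p$; conversely, the paper's algebraic power followed by BDG with exponent $p\ge 2$ avoids the square root and lands directly on the factor $\prod_{\beta}\langle\beta,x(0)\rangle^{-\nu(\beta)}$, whereas your BDG at the $L^{1}$ level produces $\big(C_{p}/\prod_{\beta}\langle\beta,x(0)\rangle^{\nu(\beta)}\big)^{1/2}$. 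This can still be absorbed into the stated form via $\sqrt{a}\le 1+a$ together with $1\le(|\alpha|\,|x(0)|)^{p}\langle\alpha,x(0)\rangle^{-p}$, so both arguments yield the lemma with constants depending on $|x(0)|$ (as the paper's own constants already do through Lemma~\ref{moment_0}).
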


\begin{proof}
Applying Lemma \ref{Girsanov_RD_0} (ii) and using the martingale property of $Z$, we have
\begin{align*}
\e[\langle \alpha, X^{\nu}(t) \rangle^{-p}]
&=
\e[\langle \alpha, X^{0}(t) \rangle^{-p} Z(t)]
\leq
\e\left[
\langle \alpha, X^{0}(t) \rangle^{-p}
\prod_{\beta \in R_{+}}
\frac
{\langle \beta, X^{0}(t) \rangle^{\nu(\beta)}}
{\langle \beta, x \rangle^{\nu(\beta)}}
\right]\\
&\leq
\frac
{|\alpha|^{\nu(\alpha)-p}}
{\prod_{\beta \in R_{+}}\langle \beta, 
x(0)
\rangle^{\nu(\beta)}}
\e\left[
|X^{0}(t)|^{\nu(\alpha)-p}
\prod_{\beta \in R_{+},~\beta \neq \alpha}
|\beta|^{\nu(\beta)}
|X^{0}(t)|^{\nu(\beta)}
\right].
\end{align*}
Applying Lemma \ref{moment_0}, we obtain \eqref{inv_moment_1}.

Now we prove \eqref{inv_sup_moment_1}.
Define $X_{\alpha}^{\nu}(t)=\langle \alpha,X^{\nu}(t) \rangle$. 
and $Y_{\alpha}^{\nu}(t)=X_{\alpha}^{\nu}(t)^{-1}$, $t \in [0,T]$ and $\alpha \in R_{+}$.
Then $X_{\alpha}^{\nu}(t)$ satisfies the following SDE
\begin{align*}
X_{\alpha}^{\nu}(t)
=
\langle \alpha,x(0) \rangle
+
\langle \alpha,B(t) \rangle
+
\sum_{\beta \in R_{+}}
\int_{0}^{t}
\frac{k(\beta)\langle \alpha,\beta \rangle}{X_{\beta}^{\nu}(s)}
\rd s.
\end{align*}
Since the solution $X$ does not hit the boundary of the Weyl chamber, it follows from It\^o's formula that $Y_{\alpha}^{\nu}(t)$ satisfies the following SDE
\begin{align*}
Y_{\alpha}^{\nu}(t)
&=
\langle \alpha,x(0) \rangle^{-1}
-
\int_{0}^{t}
Y_{\alpha}^{\nu}(s)^{2}
\langle \alpha, \rd B(s) \rangle
+
|\alpha|^{2}
\int_{0}^{t}
Y_{\alpha}^{\nu}(s)^{3}
\rd s
\\&\quad
-
\sum_{\beta \in R_{+}}
k(\beta)
\langle \alpha,\beta \rangle
\int_{0}^{t}
Y_{\alpha}^{\nu}(s)^{2}
Y_{\beta}^{\nu}(s)
\rd s.
\end{align*}
Let $p \in [2,\min_{\beta \in R_{+}}\nu(\beta)/3]$. 
By applying the inequality  $|\sum_{i=1}^m a_i|^p \leq m^{p-1}\sum_{i=1}^m |a_i|^p$ and H\"older's inequality for Lebesgue integrals, we get 
\begin{align*}
\e\left[
\sup_{0 \leq t \leq T}
Y_{\alpha}^{\nu}(t)^{p}
\right]
\leq
&(|R_+|+3)^{p-1}
\Big\{
\langle \alpha,x(0) \rangle^{-p}
+
\e \left[ \sup_{0 \leq t \leq T} \left | \int_0^t  Y_{\alpha}^{\nu}(s)^{2}
\langle \alpha, \rd B(s) \rangle \right|^p\right] \\
&+
|\alpha|^{2p}
T^{p-1}
\int_{0}^{T}
\e\left[
Y_{\alpha}^{\nu}(s)^{3p}
\right]
\rd s
+
\sum_{\beta \in R_{+}}
k(\beta)^{p}
|\langle \alpha,\beta \rangle|^{p} T^{p-1} 
\int_{0}^{T}
\e\left[
Y_{\alpha}^{\nu}(s)^{2p}
Y_{\beta}^{\nu}(s)^{p}
\right]
\rd s
\Big\}.
\end{align*}
Then, by using Young's inequality 
 $Y_{\alpha}^{\nu}(s)^{2p}
Y_{\beta}^{\nu}(s)^{p}
\leq \frac 23 |Y_{\alpha}^{\nu}(s)|^{3p}+\frac 13 |Y_{\beta}^{\nu}(s)|^{3p}$, 
 and Burkholder-Davis-Gundy's inequality for the stochastic integral, we have
\begin{align*}
&\e\left[ 
\sup_{0 \leq t \leq T}
Y_{\alpha}^{\nu}(t)^{p}
\right]\leq 
(|R_+|+3)^{p-1}
\Big\{
\langle \alpha,x(0) \rangle^{-p}
+
c_{p}
d^{p-1}
\sum_{i=1}^{d}|\alpha_{i}|^{p}
T^{p/2-1}
\int_{0}^{T}
\e\left[
Y_{\alpha}^{\nu}(s)^{2p}
\right]
\rd s\\
&\hskip 2cm +
|\alpha|^{2p}
T^{p-1}
\int_{0}^{T}
\e\left[
Y_{\alpha}^{\nu}(s)^{3p}
\right]
\rd s
+
\sum_{\beta \in R_{+}}
k(\beta)^{p}
|\langle \alpha,\beta \rangle|^{p} T^{p-1}
\int_{0}^{T}
\left(
\frac{2}{3}
\e\left[
Y_{\alpha}^{\nu}(s)^{3p}
\right]
+
\frac{1}{3}
\e\left[
Y_{\beta}^{\nu}(s)^{3p}
\right]
\right)
\rd s
\Big\},
\end{align*}
where $c_{p}$ is the constant of Burkholder-Davis-Gundy's inequality. It then follows from \eqref{inv_moment_1} that 
\begin{align*}
\e\left[
\sup_{0 \leq t \leq T}
Y_{\alpha}^{\nu}(t)^{p}
\right]
\leq
C_{p}
\left\{
\frac{1}{\langle \alpha, x(0) \rangle^{p}}
+
\frac{1}{\prod_{\beta \in R_{+}}\langle \beta, x(0) \rangle^{\nu(\beta)}}
\right\},
\end{align*}
for some $C_{p}>0$.
This implies \eqref{inv_sup_moment_1}.

Now we prove \eqref{inv_moment_2}.
Let $p \in [0,\nu(\alpha))$.
By using Theorem \ref{Lem_Gir_drift_0} (i) with $g(w)=\langle \alpha, w(t) \rangle^{-p}$, $w \in C([0,T];\weyl)$, and H\"older's inequality, we get 
\begin{align*}
\e[
\langle \alpha, X^{\nu,b}(t) \rangle^{-p}
]
&
=
\e[
\langle \alpha, X^{\nu}(t) \rangle^{-p}
\widetilde{Z}_{1}(T)
]
\leq
\e[
\langle \alpha, X^{\nu}(t) \rangle^{-\nu(\alpha)}
]^{\frac{p}{\nu(\alpha)}}
\e[
\widetilde{Z}_{1}(T)^{\frac{\nu(\alpha)}{\nu(\alpha)-p}}
]^{\frac{\nu(\alpha)-p}{\nu(\alpha)}}.
\end{align*}
Note that for any $q>1$,
\begin{align*}
\widetilde{Z}_1(t)^{q}
=
\widetilde{Z}_q(t)
\exp
\left(
\frac{q(q-1)}{2}
\langle \widetilde{M} \rangle(t)
\right)
\leq
\exp
\left(
\frac{q(q-1)}{2}
\|b\|_{\infty}^2
T
\right)
\widetilde{Z}_q(t).
\end{align*}
Therefore, since $\widetilde{Z}_q$ is a martingale, by using \eqref{inv_moment_1}, we obtain \eqref{inv_moment_2}.

The proof of \eqref{inv_sup_moment_2} is similar.
This completes the proof of Lemma \ref{inv_moment_0}.
\end{proof}

\begin{Rem}\label{Rem_inv_0}
For Bessel processes $X$ with parameter $k=\nu+1/2$, it holds that $\sup_{0 \leq t \leq T} \e[X(t)^{-p}]<\infty$ for $p \in (0,2\nu+2)$, (e.g. (13) in \cite{Al}, (3.1) in \cite{DeNeSz} or (32) in \cite{NeSz} for CIR processes).
The idea of the proof is to use an explicit representation of the expectation by the confluent hypergeometric function $_{1}F_{1}$ (see Theorem 3.1 in \cite{HuKu08}).
For non-colliding particle systems $\rd X_{i}(t)=\sum_{j \not = i}\frac{k}{X_{i}(t)-X_{j}(t)}\rd t+\sum_{j=1}^d\sigma_{i,j}(X(t))\mathrm{d}B_{j}(t)$ with bounded Lipschitz continuous diffusion coefficient $\sigma$, it is proven in \cite{NT20}  that $\sup_{0 \leq t \leq T} \e[(X_{i}(t)-X_{j}(t))^{-p}]<\infty$ for $p \in (0,\frac{3k}{d\sigma_{d}^{2}}-1]$, where $\sigma_{d}^{2}:=\max_{i=1,\ldots,d} \sup_{x \in \real^{d}} \sum_{k=1}^{d} \sigma_{i,k}(x)^{2}<\infty$ (see Lemma 3.4 in \cite{NT20}). This means that the approach in \cite{NT20}, in which the main idea is to use the It\^o's formula for the function $(x_{i}-x_{j})^{-p}$, is not suitable for systems with a high dimension $d$. 
\end{Rem}

\subsubsection*{Kolmogorov type condition}
As an application of the estimate of the inverse moment, we get the following Kolmogorov type condition of $X^{\nu,b}$.

\begin{Lem}\label{Lem_Kol_0}
Suppose that the multiplicity function $k$ satisfies $k(\alpha)=\nu(\alpha)+1/2 \geq 1/2$ and $b$ is bounded measurable.
If $b=0$ (resp. $b \neq 0$), then for any $p \in (0,\min_{\alpha \in R_{+}}\nu(\alpha)]$ (resp. $p \in (0,\min_{\alpha \in R_{+}}\nu(\alpha))$), there exists $C_p>0$ such that for any $t,s \in [0,T]$,
\begin{align*}
\e\left[
\left|
X^{\nu,b}(t)
-
X^{\nu,b}(s)
\right|^{p}
\right]
\leq
C_p
|t-s|^{p/2}.
\end{align*}
\end{Lem}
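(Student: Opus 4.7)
The plan is to work directly from the SDE \eqref{RD_SDE}. Taking $s\le t$ without loss of generality, decompose
\[
X^{\nu,b}(t) - X^{\nu,b}(s)
= (B(t) - B(s)) + \int_s^t f_k(X^{\nu,b}(u))\,\rd u + \int_s^t b(u, X^{\nu,b}(u))\,\rd u,
\]
and apply the elementary bound $|a+b+c|^p \leq 3^{(p-1)_+}(|a|^p + |b|^p + |c|^p)$ to reduce to handling the three pieces separately. The Brownian increment contributes $\e[|B(t)-B(s)|^p] = C_p |t-s|^{p/2}$ by Gaussian scaling, while the bounded perturbation satisfies $\e[|\int_s^t b\,\rd u|^p] \leq \|b\|_\infty^p |t-s|^p \leq \|b\|_\infty^p T^{p/2} |t-s|^{p/2}$. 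Since $|t-s|^p \leq T^{p/2}|t-s|^{p/2}$ for every $p>0$ and every $|t-s|\le T$, it suffices to establish the bound $C_p|t-s|^p$ for the singular drift term.

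For that term, write $f_k(x) = \sum_{\alpha\in R_+} k(\alpha)\,\alpha/\langle \alpha,x\rangle$ and use the triangle inequality to reduce the task to controlling, for each $\alpha\in R_+$,
\[
\e\!\left[\,\Bigl(\int_s^t \frac{\rd u}{\langle \alpha, X^{\nu,b}(u)\rangle}\Bigr)^{\!p}\,\right].
\]
For $p\ge 1$, H\"older's inequality gives $(\int_s^t g\,\rd u)^p \leq |t-s|^{p-1}\int_s^t g^p\,\rd u$; then Fubini together with the uniform-in-$u$ inverse moment estimate from Lemma \ref{inv_moment_0} — valid for $p\in(0,\nu(\alpha)]$ when $b=0$ and for $p\in(0,\nu(\alpha))$ when $b\not=0$, precisely the range in the hypothesis — yields a bound of the form $C_p|t-s|^p$. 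Summing over $\alpha\in R_+$ and combining with the two contributions above closes the estimate for every $p\in[1,\min_\alpha\nu(\alpha)]$ (resp.\ $[1,\min_\alpha\nu(\alpha))$).

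To descend to exponents $p\in(0,1)$ within the allowed range, I would appeal to Lyapunov's inequality $\e[|Y|^p]^{1/p}\le \e[|Y|^{p'}]^{1/p'}$ with $p'=\max(1,p)$ applied to $Y=|X^{\nu,b}(t)-X^{\nu,b}(s)|$, which transfers the $|t-s|^{p'/2}$ bound obtained above down to smaller exponents while only absorbing constants into $C_p$. The main obstacle of the entire argument is the singular drift step: it is Lemma \ref{inv_moment_0} that makes the $\rd u$-integrals of $\langle\alpha,X^{\nu,b}(u)\rangle^{-1}$ tractable, and the exact range of validity of that lemma is what dictates both the bound on $p$ in the Kolmogorov condition and the subtle distinction between the closed endpoint (for $b=0$) and the open endpoint (for $b\not=0$).
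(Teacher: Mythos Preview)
Your proposal is correct and follows essentially the same route as the paper: split the increment into the Brownian part, the singular drift $\int_s^t f_k(X^{\nu,b}(u))\,\rd u$, and the bounded perturbation, then control the singular piece via H\"older in $u$ together with the inverse moment bound of Lemma~\ref{inv_moment_0}. Your explicit Lyapunov reduction for $p\in(0,1)$ is a bit more careful than the paper's writeup (which tacitly uses the $p\ge 1$ form of Jensen/H\"older throughout); note only that this reduction needs $p'=1$ to lie in the admissible range, i.e.\ $\min_\alpha\nu(\alpha)\ge 1$, which is harmless in all later applications.
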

\begin{proof}
Let $t,s \in [0, T]$ with $s<t$.
Then, since $b$ is bounded, we have
\begin{align*}
&|X^{\nu,b}(t)-X^{\nu,b}(s)|^{p}\\
&\leq
3^{p-1}
\left\{
	|B(t)-B(s)|^{p}
+
	|R_{+}|^{p-1}
	|t-s|^{p-1}
	\sum_{\alpha \in R_{+}}
		k(\alpha)^p
		\int_{s}^{t}
			\frac{|\alpha|^{p}}{\langle \alpha, X^{\nu,b}(u) \rangle^p}
		\rd u
+
	\|b\|_{\mathrm{Lip}}^p
	|t-s|^{p}
\right\}.
\end{align*}
Hence by taking the expectation, and using Lemma \ref{inv_moment_0}, we conclude the assertion.
\end{proof}

\section{Numerical schemes for radial Dunkl processes}\label{sec_EM}

In this section, we consider the numerical scheme for the radial Dunkl process with drift given by  \eqref{RD_SDE}. We will denote $X^{\nu, b}$ by $X$ to simplify our notation.

\subsection{Semi-implicit Euler--Maruyama scheme}\label{subsec_im_EM}
We first propose a semi-implicit Euler--Maruyama scheme for which the approximate solution takes values in the Weyl chamber $\weyl$.
The construction of this scheme is based on the following lemma.

\begin{Lem}\label{Lem_imp_eq}
Let $x \in \real^d$ and $c:R \to (0,\infty)$ be a measurable function.
The  equation
\begin{equation} \label{eqn_xi}
y=x + \sum_{\alpha \in R_{+}} \frac{c(\alpha)}{\langle \alpha, y \rangle}\alpha
\end{equation}
has a unique solution in $\weyl$.
\end{Lem}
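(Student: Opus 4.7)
My plan is to recast the equation as a variational problem. Define the function $F:\weyl\to\real$ by
\begin{align*}
F(y):=\frac{1}{2}|y-x|^{2}-\sum_{\alpha\in R_{+}}c(\alpha)\log\langle\alpha,y\rangle.
\end{align*}
A direct computation gives
\begin{align*}
\nabla F(y)=(y-x)-\sum_{\alpha\in R_{+}}\frac{c(\alpha)}{\langle\alpha,y\rangle}\alpha,
\end{align*}
so $y\in\weyl$ solves \eqref{eqn_xi} if and only if $y$ is a critical point of $F$.

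For uniqueness, I would compute the Hessian
\begin{align*}
\nabla^{2}F(y)=I_{d}+\sum_{\alpha\in R_{+}}\frac{c(\alpha)}{\langle\alpha,y\rangle^{2}}\alpha\alpha^{\top},
\end{align*}
which is the sum of the identity matrix and a positive semidefinite matrix (since $c(\alpha)>0$ and each $\alpha\alpha^{\top}$ is rank-one positive semidefinite). Hence $\nabla^{2}F(y)$ is positive definite on the open convex set $\weyl$, so $F$ is strictly convex there and admits at most one critical point.

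For existence, I would establish that $F$ is coercive on $\weyl$, meaning $F(y)\to+\infty$ whenever $y$ approaches the boundary of $\weyl$ (i.e.\ $\langle\alpha,y\rangle\downarrow 0$ for some $\alpha\in R_{+}$) or $|y|\to\infty$. Boundary blow-up is immediate from the $-c(\alpha)\log\langle\alpha,y\rangle$ term, while the growth at infinity follows because $\frac{1}{2}|y-x|^{2}$ grows quadratically and dominates the at-most-logarithmic growth of $\sum_{\alpha}c(\alpha)\log\langle\alpha,y\rangle$ (using $\langle\alpha,y\rangle\leq|\alpha||y|$). Consequently, on any sublevel set $\{F\leq N\}$, $y$ is bounded and bounded away from $\partial\weyl$, so the sublevel set is a compact subset of $\weyl$, and the continuous function $F$ attains its infimum at an interior point $y^{\ast}\in\weyl$. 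This minimizer satisfies $\nabla F(y^{\ast})=0$, which is \eqref{eqn_xi}.

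I do not anticipate a serious obstacle: the main point is the strict convexity of $F$ on the convex domain $\weyl$, which packages both existence (via coercivity and continuity) and uniqueness (via strict convexity) in one stroke. One minor care point is to state cleanly that $\weyl$ is an open convex cone and that $F$ is smooth on it, so that the first-order condition at the minimizer is indeed $\nabla F=0$ rather than involving boundary terms.
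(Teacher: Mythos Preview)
Your proof is correct. The paper omits its own argument and defers to Proposition~2.2 in \cite{NT20}, which uses essentially the same variational approach: one minimizes the strictly convex, coercive function $F(y)=\tfrac{1}{2}|y-x|^{2}-\sum_{\alpha\in R_{+}}c(\alpha)\log\langle\alpha,y\rangle$ over the open convex cone $\weyl$, and the unique minimizer is the desired solution.
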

The proof of this lemma is similar to the one of Proposition 2.2 in \cite{NT20} and will be omitted.

Now we can define the semi-implicit Euler--Maruyama approximation  $X^{(n)}=(X^{(n)}(t_{\ell}))_{\ell=0}^{n}$ of  $X$ at times $t_\ell = \frac{\ell T}{n}$ as follows: $X^{(n)}(0):=X(0)=x(0)$, and for each $\ell=0,\ldots,n-1$, $X^{(n)}(t_{\ell+1})$ is the unique solution in $\weyl$ of the following equation:
\begin{align*}
X^{(n)}(t_{\ell+1})
&=
X^{(n)}(t_{\ell})
+
\Delta B_{\ell}
+
\left\{
\sum_{\alpha \in R_{+}}
\frac{k(\alpha)}{\langle \alpha, X^{(n)}(t_{\ell+1}) \rangle}
\alpha
+
b(t_{\ell},X^{(n)}(t_{\ell}))
\right\}
\Delta t.
\end{align*}

If the drift coefficient $b$ is Lipschitz continuous in space and $1/2$-H\"older continuous in time
\begin{align*}
\|b\|_{\mathrm{Lip}}
:=
\sup_{x,y\in \weyl,~x\neq y,~t \in [0,T]}
\frac{|b(t,x)-b(t,y)|}{|x-y|}
+
\sup_{x\in \weyl, ~t,s \in [0,T],~t \neq s}
\frac{|b(t,x)-b(s,x)|}{|t-s|^{1/2}}
<\infty,
\end{align*}
then we have the following result on the rate of strong convergence.

\begin{Thm}\label{main_1}
Suppose that $\min_{\alpha \in R_{+}} \nu(\alpha) \geq 3$ and the drift coefficient $b$ is bounded, and is Lipschitz continuous in space and $1/2$-H\"older continuous in time.
For any $p \in [1,\min_{\alpha \in R_{+}} \nu(\alpha)/3)$, there exists $C=C(p,T)>0$ such that for any $n \in \n$,
\begin{align*}
\e
\left[
\sup_{\ell =1,\ldots,n}
\left|
X(t_{\ell})
-
X^{(n)}(t_{\ell})
\right|^{p}
\right]^{1/p}
\leq
\frac{C}{\sqrt{n}}.
\end{align*}
For $b \equiv 0$, the statement holds also for $p=\min_{\alpha \in R_{+}} \nu(\alpha)/3$.
\end{Thm}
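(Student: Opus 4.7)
The plan is to pass to a continuous-time version of the scheme and apply the chain rule to a power of the squared error, exploiting the one-sided Lipschitz property \eqref{f_OSL} of $f_{k}$. I will freely assume $p\geq 2$, which costs nothing since $L^{p}$-norms are monotone. First I would extend $X^{(n)}$ to $[0,T]$ by
\[
X^{(n)}(t) = x(0) + B(t) + \int_{0}^{t} f_{k}(X^{(n)}(\kappa_{n}(s)))\,\rd s + \int_{0}^{t} b(\eta_{n}(s),X^{(n)}(\eta_{n}(s)))\,\rd s,
\]
with $\eta_{n}(s)=t_{\ell}$ and $\kappa_{n}(s)=t_{\ell+1}$ for $s\in[t_{\ell},t_{\ell+1})$, so the error $E(t):=X(t)-X^{(n)}(t)$ has no martingale part and satisfies, by the chain rule,
\[
|E(t)|^{p} = p \int_{0}^{t} |E(s)|^{p-2} \langle E(s), \{f_{k}(X(s))-f_{k}(X^{(n)}(\kappa_{n}(s)))\} + \{b(s,X(s))-b(\eta_{n}(s),X^{(n)}(\eta_{n}(s)))\} \rangle \rd s.
\]

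Next, inserting $\pm X^{(n)}(\kappa_{n}(s))$ in the first slot splits the $f_{k}$-drift into
\[
\langle X(s)-X^{(n)}(\kappa_{n}(s)), f_{k}(X(s))-f_{k}(X^{(n)}(\kappa_{n}(s))) \rangle + \langle X^{(n)}(\kappa_{n}(s))-X^{(n)}(s), f_{k}(X(s))-f_{k}(X^{(n)}(\kappa_{n}(s))) \rangle.
\]
Since both arguments lie in $\weyl$, the first piece is $\leq 0$ by \eqref{f_OSL} and can be discarded. The second piece is the genuine remainder: using $X^{(n)}(\kappa_{n}(s))-X^{(n)}(s) = (B(\kappa_{n}(s))-B(s)) + (\kappa_{n}(s)-s)[f_{k}(X^{(n)}(\kappa_{n}(s)))+b(\eta_{n}(s),X^{(n)}(\eta_{n}(s)))]$, the Brownian increment contributes $O(\Delta t^{1/2})$ in $L^{p}$, while the drift increments give $O(\Delta t)$ once $|f_{k}(X^{(n)}(\kappa_{n}(s)))|$ is controlled by inverse moments of the scheme. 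The $b$-term is separately handled by its spatial Lipschitz and temporal $1/2$-H\"older regularity, together with the Kolmogorov estimate in Lemma \ref{Lem_Kol_0}.

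For the factor $|f_{k}(X(s))-f_{k}(X^{(n)}(\kappa_{n}(s)))|$ appearing in the Cauchy--Schwarz bound of the remainder, I would use the identity
\[
\frac{k(\alpha)}{\langle \alpha, X(s)\rangle} - \frac{k(\alpha)}{\langle \alpha, X^{(n)}(\kappa_{n}(s))\rangle} = -\frac{k(\alpha) \langle \alpha, E(s)+[X^{(n)}(s)-X^{(n)}(\kappa_{n}(s))] \rangle}{\langle \alpha,X(s)\rangle \langle \alpha, X^{(n)}(\kappa_{n}(s))\rangle},
\]
and apply H\"older's inequality, combining (i) the inverse moments $\e[\langle \alpha,X(s)\rangle^{-q}]$ supplied by Lemma \ref{inv_moment_0}, (ii) the analogous inverse moments for the scheme, and (iii) the Kolmogorov modulus of $X^{(n)}$. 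Balancing the three H\"older exponents so that the required $q$ stays in the admissible range $q \leq \min_{\alpha}\nu(\alpha)$ (respectively $q < \min_{\alpha}\nu(\alpha)$ when $b \not\equiv 0$) is precisely what forces the constraint $p < \min_{\alpha \in R_{+}}\nu(\alpha)/3$ appearing in the theorem. Once the remainder is shown to be $O(\Delta t^{1/2})$ in $L^{p}$, Gronwall's inequality on $\e[\sup_{r\leq t}|E(r)|^{p}]$ gives the bound $\e[\sup_{\ell}|E(t_{\ell})|^{p}] \leq C n^{-p/2}$, which is the advertised rate.

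The main obstacle is the uniform discrete inverse moment estimate $\sup_{n,\ell}\e[\langle\alpha,X^{(n)}(t_{\ell})\rangle^{-q}] < \infty$ in the required range of $q$. Unlike the continuous process, the scheme admits no Girsanov transformation removing the Dunkl drift, so I would argue directly from the implicit equation \eqref{eqn_xi}: taking the inner product with the harmonic direction provided by the identity \eqref{harmonic_1} (or with $\alpha$ itself) yields a scalar inequality that can be compared, via a discrete supermartingale argument, with a one-dimensional Bessel-type scheme whose inverse moments are known. If this direct route proves delicate, an alternative is a bootstrap: first obtain an $L^{q}$ error bound for small $q$, then deduce $\e[\langle\alpha,X^{(n)}(t_{\ell})\rangle^{-q}]$ from $\e[\langle\alpha,X(t_{\ell})\rangle^{-q}]$ via the triangle inequality, and finally upgrade the exponent to the target $p$. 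With this estimate in hand, the Brownian scaling of the leading remainder term delivers the $n^{-1/2}$ rate.
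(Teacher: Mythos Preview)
Your approach has a genuine gap, and you have correctly identified it yourself: the inverse moment estimate for the scheme, $\sup_{n,\ell}\e[\langle\alpha,X^{(n)}(t_{\ell})\rangle^{-q}]<\infty$, is not available and is hard to establish. Neither of your proposed workarounds is convincing. The discrete supermartingale comparison with a one-dimensional Bessel scheme does not follow from \eqref{harmonic_1} in any evident way, because the implicit equation \eqref{eqn_xi} couples all the $\langle\alpha,\cdot\rangle$ simultaneously and there is no Girsanov structure at the discrete level. The bootstrap argument is circular: deducing $\e[\langle\alpha,X^{(n)}(t_{\ell})\rangle^{-q}]<\infty$ from the error bound via the triangle inequality would require $|X(t_{\ell})-X^{(n)}(t_{\ell})|$ to stay strictly below $\langle\alpha,X(t_{\ell})\rangle$, which is a pointwise (not $L^{p}$) statement you do not have.

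The paper avoids this problem entirely by a different decomposition at the discrete level. Writing
\[
e(\ell+1)=e(\ell)+\bigl\{f_{k}(X(t_{\ell+1}))-f_{k}(X^{(n)}(t_{\ell+1}))\bigr\}\Delta t+\bigl\{b(t_{\ell},X(t_{\ell}))-b(t_{\ell},X^{(n)}(t_{\ell}))\bigr\}\Delta t+r(\ell),
\]
with remainder $r(\ell)=\int_{t_{\ell}}^{t_{\ell+1}}\{f_{k}(X(s))-f_{k}(X(t_{\ell+1}))\}\rd s+\int_{t_{\ell}}^{t_{\ell+1}}\{b(s,X(s))-b(t_{\ell},X(t_{\ell}))\}\rd s$, both $f_{k}$-arguments in the main drift difference are evaluated at the \emph{same} time $t_{\ell+1}$, so taking the inner product with $e(\ell+1)$ kills that term exactly by \eqref{f_OSL}. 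The crucial point is that $r(\ell)$ involves only the continuous process $X$, never $X^{(n)}$. A short discrete Gronwall argument then yields $\sup_{\ell}|e(\ell)|\leq C_{0}\sum_{j}|r(j)|$ almost surely, and the $L^{p}$-bound on the remainder follows from $\frac{1}{\langle\alpha,X(s)\rangle}-\frac{1}{\langle\alpha,X(\kappa_{n}(s))\rangle}$ together with the three-factor H\"older split using Lemma~\ref{inv_moment_0} and Lemma~\ref{Lem_Kol_0}; this is where the constraint $3p<\min_{\alpha}\nu(\alpha)$ (respectively $\leq$ when $b\equiv 0$) arises. In your continuous-time formulation the analogous move would be to add and subtract $f_{k}(X(\kappa_{n}(s)))$ rather than to split the first slot of the inner product; then the one-sided Lipschitz term reads $\langle E(\kappa_{n}(s)),f_{k}(X(\kappa_{n}(s)))-f_{k}(X^{(n)}(\kappa_{n}(s)))\rangle\leq 0$ and the remaining pieces only involve $X$.
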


If the drift coefficient $b$ is smooth, then the semi-implicit Euler--Maruyama scheme converges at the strong rate of order one.

\begin{Thm}\label{main_2}
Suppose that $\min_{\alpha \in R_{+}} \nu(\alpha) \geq 16$ and $b \in C^{1,2}_{b}([0,T] \times \weyl;\real^d)$.
For any $p \in [1,\min_{\alpha \in R_{+}} \nu(\alpha)/4)$ there exists $C=C(p,T)>0$ such that for any $n \in \n$ with $T/n \leq 1$,
\begin{align*}
\e
\left[
\sup_{\ell =1,\ldots,n}
\left|
	X(t_{\ell})
	-
	X^{(n)}(t_{\ell})
\right|^{p}
\right]^{1/p}
\leq
\frac{C_p}{n}.
\end{align*}
For $b \equiv 0$, the statement holds also for $p=\min_{\alpha \in R_{+}} \nu(\alpha)/4$.
\end{Thm}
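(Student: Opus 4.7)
The strategy is to refine the semi-implicit error analysis used for Theorem \ref{main_1} by performing a second-order It\^o expansion of both $f_k$ and $b$ between consecutive grid points. Each such expansion produces a one-step martingale error of order $(\Delta t)^{3/2}$ and a one-step Lebesgue error of order $(\Delta t)^2$; summing over $n$ steps and invoking the Burkholder--Davis--Gundy (BDG) inequality yields the rate $1/n$, provided enough inverse moments of $\langle\alpha,X(\cdot)\rangle$ are available.

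\emph{Step 1 (energy inequality).} Set $e_\ell:=X(t_\ell)-X^{(n)}(t_\ell)$. Since the Brownian parts cancel,
\begin{align*}
e_{\ell+1}-e_\ell
=\int_{t_\ell}^{t_{\ell+1}}\!\bigl\{f_k(X(s))-f_k(X^{(n)}(t_{\ell+1}))\bigr\}\rd s
+\int_{t_\ell}^{t_{\ell+1}}\!\bigl\{b(s,X(s))-b(t_\ell,X^{(n)}(t_\ell))\bigr\}\rd s,
\end{align*}
which is of size $O(\Delta t)$ in every $L^p$. Using $|e_{\ell+1}|^2-|e_\ell|^2\leq 2\langle e_{\ell+1},e_{\ell+1}-e_\ell\rangle$ together with the one-sided Lipschitz estimate \eqref{f_OSL} applied at $X(t_{\ell+1}),X^{(n)}(t_{\ell+1})$, the stable part of the drift is discarded and I am left with
\begin{align*}
|e_{\ell+1}|^2-|e_\ell|^2
\leq 2\langle e_{\ell+1},J_\ell^f\rangle+2\langle e_{\ell+1},J_\ell^b\rangle
+C\|b\|_{\mathrm{Lip}}\Delta t\,|e_{\ell+1}||e_\ell|,
\end{align*}
where $J_\ell^f:=\int_{t_\ell}^{t_{\ell+1}}\{f_k(X(s))-f_k(X(t_{\ell+1}))\}\rd s$ and $J_\ell^b$ is its analogue for $b$.

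\emph{Step 2 (It\^o expansion).} Since $f_k\in C^\infty(\weyl)$ and $X$ stays in $\weyl$, It\^o's formula applied componentwise to $f_k(X(\cdot))$, combined with $\rd X=\rd B+(f_k+b)\rd u$ and a stochastic Fubini, gives
\begin{align*}
J_\ell^f
=-\int_{t_\ell}^{t_{\ell+1}}(u-t_\ell)\,[Df_k(X(u))]\rd B(u)+\widetilde R_\ell^f,
\end{align*}
with $\widetilde R_\ell^f$ a double Lebesgue integral whose integrand is a bounded combination of $Df_k\cdot(f_k+b)$ and $\mathrm{Tr}(D^2 f_k)$; analogously for $J_\ell^b$, using $b\in C_b^{1,2}$. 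Pointwise, the martingale piece has size $(\Delta t)^{3/2}$ and the Lebesgue residue size $(\Delta t)^2$.

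\emph{Step 3 ($L^p$-sup estimate and Gronwall).} Summing, taking $\sup_{L\leq n}$ and then the $L^p$-norm, I split
$\langle e_{\ell+1},J_\ell^f\rangle=\langle e_\ell,J_\ell^f\rangle+\langle e_{\ell+1}-e_\ell,J_\ell^f\rangle$.
The first sum is a discrete martingale because $e_\ell\in\mathscr{F}_{t_\ell}$ and the integrand of $J_\ell^f$ is adapted; discrete BDG and Young's inequality absorb the resulting $\sup_{\ell}|e_\ell|^{2p}$ into the left-hand side. The second sum pairs an $O(\Delta t)$ with an $O((\Delta t)^{3/2})$ factor, hence is of higher order. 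The residues $\widetilde R_\ell^{\cdot}$ and the Lipschitz term are handled by H\"older's inequality, and a discrete Gronwall inequality closes the recursion. Finally, the case $b\equiv 0$ uses the sharper endpoint $p=\min_\alpha\nu(\alpha)/4$ allowed by \eqref{inv_moment_1}.

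\emph{Step 4 (inverse moments; main obstacle).} Since $|Df_k(x)|\lesssim\sum_{\alpha\in R_+}\langle\alpha,x\rangle^{-2}$ and $|D^2 f_k(x)|\lesssim\sum_{\alpha\in R_+}\langle\alpha,x\rangle^{-3}$, the tightest requirement comes from the square function of the martingale piece, which demands $\sup_u\e[\langle\alpha,X(u)\rangle^{-4p}]<\infty$; this is exactly what Lemma \ref{inv_moment_0}\eqref{inv_moment_2} provides when $4p<\min_\alpha\nu(\alpha)$, pinning down the threshold $\min_\alpha\nu(\alpha)\geq 16$. The main obstacle is the bookkeeping around the pairing $\langle e_{\ell+1},J_\ell^\bullet\rangle$: because the scheme is implicit, $e_{\ell+1}$ is not $\mathscr{F}_{t_\ell}$-measurable, so one cannot directly invoke the martingale property of the inner stochastic integral. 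The split $\langle e_{\ell+1},\cdot\rangle=\langle e_\ell,\cdot\rangle+\langle e_{\ell+1}-e_\ell,\cdot\rangle$ together with the precise matching of the $\sqrt{\Delta t}$-scalings of $|e_{\ell+1}-e_\ell|$ and $|J_\ell^\bullet|$ is what prevents the loss of a factor $\sqrt{\Delta t}$ and forces the exponent $\min_\alpha\nu(\alpha)/4$ rather than a weaker one.
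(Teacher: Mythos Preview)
Your overall strategy---It\^o expansion of $f_k$ and $b$, isolation of a martingale one-step error of size $(\Delta t)^{3/2}$ and a Lebesgue error of size $(\Delta t)^2$, then discrete BDG plus Gronwall---is exactly what the paper does in Lemma~\ref{Lem_Remi_0} and the proof of Theorem~\ref{main_2}. The difference, and the gap, lies in your energy inequality and the split in Step~3.

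By starting from $|e_{\ell+1}|^2-|e_\ell|^2\leq 2\langle e_{\ell+1},e_{\ell+1}-e_\ell\rangle$, you pair the remainder $J_\ell^\bullet$ with the \emph{non-adapted} $e_{\ell+1}$ and are forced to split off $\langle e_{\ell+1}-e_\ell,J_\ell^\bullet\rangle$. Two problems arise. First, your claim that $e_{\ell+1}-e_\ell$ is $O(\Delta t)$ in every $L^p$ is unjustified: from \eqref{def_e} this increment contains the term $\{f_k(X(t_{\ell+1}))-f_k(X^{(n)}(t_{\ell+1}))\}\Delta t$, and controlling $f_k(X^{(n)}(t_{\ell+1}))$ in $L^p$ would require inverse moments $\e[\langle\alpha,X^{(n)}(t_{\ell+1})\rangle^{-p}]$ for the \emph{scheme}, which are nowhere established. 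Second, even granting that bound, the cross sum $\sum_j\langle e_{j+1}-e_j,J_j^f(\mathrm{mart})\rangle$ carries no martingale structure; the crude Jensen bound $|\sum_j a_j|^q\le n^{q-1}\sum_j|a_j|^q$ then gives at best $n^{q}\cdot(\Delta t)^{5q/2}=T^q(\Delta t)^{3q/2}$ for $\sup_\ell|e_\ell|^{2q}$, i.e.\ rate $3/4$, not rate~$1$. Your assertion that this term is ``of higher order'' is therefore incorrect.

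The paper sidesteps both issues by a different quadratic identity: it expands
\[
\bigl|e_X(\ell+1)-\{f_k(X(t_{\ell+1}))-f_k(X^{(n)}(t_{\ell+1}))\}\Delta t\bigr|^2
=\bigl|e_X(\ell)+\{b\text{-diff}\}\Delta t+r_X(\ell)\bigr|^2,
\]
which after using \eqref{f_OSL} produces the cross term $2\langle e_X(\ell),r_X(\ell)\rangle$ with the \emph{adapted} factor $e_X(\ell)$ and simultaneously yields a helpful negative term $-|f_k\text{-diff}|^2(\Delta t)^2$. This makes $M_\ell=\sum_{j<\ell}\langle e_X(j),r_X^{(3)}(j)+r_X^{(4)}(j)\rangle$ a genuine $(\mathscr{F}_{t_\ell})$-martingale, and BDG plus the estimates of Lemma~\ref{Lem_Remi_0} close the Gronwall loop at order $(\Delta t)^{2q}$. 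If you want to rescue your route, you must either retain the $-|e_{\ell+1}-e_\ell|^2$ term you discarded (and use it to absorb the uncontrolled $|f_k\text{-diff}|^2(\Delta t)^2$), or switch to the paper's square expansion so that the remainder is paired with $e_\ell$ from the outset.
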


\begin{Rem}
Recently, some tamed Euler-Maruyama approximation schemes have been used to approximate the solution of SDEs with one-sided Lipschitz and super-linear growth coefficients (see \cite{HuJeKl12, Sa13, Sa16}).
Although the drift coefficient $f_{k}$ of $X$  is one-sided Lipschitz, it does not satisfy the ``super linear growth" condition. Thus, it might be difficult to use a tamed (forward) Euler--Maruyama scheme in our setting.
\end{Rem}

As an application of Theorem \ref{main_1} and Theorem \ref{main_2}, we obtain the rate of strong convergence for the numerical approximation of Wishart processes.
\begin{Cor}\label{main_3}
Let $X$ be a solution of SDE \eqref{RD_B_1} and let $Y$ be a Wishart process, which is a solution of SDE \eqref{Wishart_SDE}.
Define $Y^{(n)}=(Y^{(n)}(t_{\ell}))_{\ell=0}^{n}$ by $Y_{i}^{(n)}:=|X_{i}^{(n)}(t_{\ell})|^{2}$ for $i=1,\ldots,d$.
Let $\nu:=k-1/2 \geq 1/2$.
For $p\geq 1$, there exists $C=C(p,T)>0$ such that for any $n \in \n$ with $T/n \leq 1$,
\begin{align*}
\e
\left[
\sup_{\ell =1,\ldots,n}
\left|
Y(t_{\ell})
-
Y^{(n)}(t_{\ell})
\right|^{p}
\right]^{1/p}
\leq
\left\{ \begin{array}{ll}
\displaystyle
\frac{C_p}{\sqrt{n}}
&\text{ if } ,  p \in [1,\nu/3) \text{ and } \nu \geq 3,\\
\displaystyle
\frac{C_p}{n}
&\text{ if } ,  p \in [1,\nu/4) \text{ and } \nu \geq 16.
\end{array}\right.
\end{align*}
\end{Cor}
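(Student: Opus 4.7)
The plan is to reduce Corollary~\ref{main_3} to Theorems~\ref{main_1} and~\ref{main_2} by means of the elementary difference-of-squares identity
\begin{align*}
Y_i(t_\ell)-Y_i^{(n)}(t_\ell)
=
(X_i(t_\ell)-X_i^{(n)}(t_\ell))(X_i(t_\ell)+X_i^{(n)}(t_\ell)),
\end{align*}
which holds componentwise. Because both $X(t_\ell)$ and $X^{(n)}(t_\ell)$ lie in the Weyl chamber and hence have positive components, passing to the Euclidean norm yields
\begin{align*}
|Y(t_\ell)-Y^{(n)}(t_\ell)|
\le
(|X(t_\ell)|+|X^{(n)}(t_\ell)|)\,|X(t_\ell)-X^{(n)}(t_\ell)|.
\end{align*}
The $L^{p}$-sup-norm error on $Y$ is thereby reduced to the $L^{p(1+\varepsilon)}$-sup-norm error on $X$, at the price of a high-order polynomial-moment factor.

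Since the ranges $[1,\nu/3)$ and $[1,\nu/4)$ are open on the right, for each admissible $p$ I would pick $\varepsilon=\varepsilon(p,\nu)>0$ so that $p(1+\varepsilon)$ still belongs to the corresponding interval. H\"older's inequality with the conjugate pair $(1+\varepsilon,(1+\varepsilon)/\varepsilon)$ then splits the error as
\begin{align*}
\e\left[\sup_{\ell}|Y(t_\ell)-Y^{(n)}(t_\ell)|^{p}\right]
&\le
\e\left[\sup_{\ell}|X(t_\ell)-X^{(n)}(t_\ell)|^{p(1+\varepsilon)}\right]^{\frac{1}{1+\varepsilon}} \\
&\quad\times
\e\left[\sup_{\ell}(|X(t_\ell)|+|X^{(n)}(t_\ell)|)^{\frac{p(1+\varepsilon)}{\varepsilon}}\right]^{\frac{\varepsilon}{1+\varepsilon}}.
\end{align*}
Equation~\eqref{RD_B_1} is a pure radial Dunkl SDE for one of the root systems of type $B_d$, $C_d$ or $D_d$, with no additional drift ($b\equiv 0$). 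Hence Theorem~\ref{main_1} (respectively Theorem~\ref{main_2}) bounds the first factor by $Cn^{-p(1+\varepsilon)/2}$ (respectively $Cn^{-p(1+\varepsilon)}$); extracting the power $1/(1+\varepsilon)$ produces the desired rate $O(n^{-p/2})$ (respectively $O(n^{-p})$).

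What is left is to control, uniformly in $n$, every polynomial moment of $\sup_{\ell}|X^{(n)}(t_\ell)|$; the corresponding bound for $X$ itself is furnished by Lemma~\ref{moment_0}. To handle the implicit scheme I would rewrite its defining identity as $X^{(n)}(t_{\ell+1})-f_k(X^{(n)}(t_{\ell+1}))\Delta t=X^{(n)}(t_\ell)+\Delta B_\ell$, square both sides and invoke the coercivity identity $\langle x, f_k(x)\rangle=\gamma_k$ established in the proof of Lemma~\ref{moment_0}. This collapses to
\begin{align*}
|X^{(n)}(t_{\ell+1})|^{2}
\le
|X^{(n)}(t_\ell)+\Delta B_\ell|^{2}+2\gamma_k\Delta t,
\end{align*}
after which a standard discrete Gronwall argument combined with the Burkholder--Davis--Gundy inequality delivers moment bounds of every order, paralleling Lemma~\ref{moment_0}. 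This uniform moment estimate for the semi-implicit scheme is the only non-routine ingredient of the plan; once it is in hand, assembling the two factors immediately yields the claimed $1/\sqrt{n}$ and $1/n$ convergence rates.
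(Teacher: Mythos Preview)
Your proposal is correct and follows essentially the same approach as the paper: the difference-of-squares identity, H\"older's inequality with exponent $1+\varepsilon$ (the paper writes $1+\delta$), Theorems~\ref{main_1}/\ref{main_2} for the error factor, and a uniform moment bound for $\sup_\ell |X^{(n)}(t_\ell)|$. The paper isolates the last step as a separate lemma (Lemma~\ref{BCD_SDE}) and proves it by introducing the fixed point $v^*$ of $v=\Delta t\,g(v)$ together with an induction on the moment order; your inline sketch via the coercivity identity $\langle x,f_k(x)\rangle=\gamma_k$ and squaring $X^{(n)}(t_{\ell+1})-\Delta t\,f_k(X^{(n)}(t_{\ell+1}))=X^{(n)}(t_\ell)+\Delta B_\ell$ is a slightly more direct variant of the same argument, though the ``standard Gronwall + BDG'' step still implicitly requires the same induction on $q$ that the paper spells out. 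One harmless slip: for type $D_d$ the last coordinate of points in $\weyl_D$ need not be positive, but your difference-of-squares bound does not actually use positivity, so the argument is unaffected.
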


\subsection{Semi-implicit and truncated Euler--Maruyama schemes}\label{subsec_tr_EM}

The solution of the non-linear equation \eqref{eqn_xi} for the general radial Dunkl process may not have an explicit form, and an efficient numerical solution for the general case has not yet been obtained.
Therefore,  as an alternative approach, we propose a semi-implicit and truncated Euler--Maruyama scheme for $X$, which takes values in $\real^{d}$, not in the Weyl chamber $\weyl$, but it can be implemented efficiently in a computer.

Let $\varepsilon>0$ be fixed.
We first consider an SDE approximation $X_{\varepsilon}=(X_{\varepsilon}(t))_{t \geq 0}$ for $X$.
Define $g_{\varepsilon}:\real \to (0,\infty)$ by $g_{\varepsilon}(x):= \frac{1}{ x \vee \varepsilon}$.
Then, it holds that
\begin{align}
&|g_{\varepsilon}(x)-g_{\varepsilon}(y)|
\leq \varepsilon^{-2}|x-y|,~\text{ for any } x,y \in \real,
\label{g_1}
\\
&(x-y)(g_{\varepsilon}(x)-g_{\varepsilon}(y)) 
\leq 0,~ \text{ for any }  x,y \in \real,
\label{g_2}
\\
&x^{-1}-g_{\varepsilon}(x)
\leq
\varepsilon x^{-2},~ \text{ for any } x>0.
\label{g_3}
\end{align}
We recall that for a multiplicity function $k:R \to \real$, $f_{k}:\weyl \to \real^{d}$ is defined in \eqref{def_f}.
Then we define an approximation $f_{k,\varepsilon}:\real^{d} \to \real^{d}$ of $f_{k}$ by
\begin{align*}
f_{k,\varepsilon}(x)
:=
\sum_{\alpha \in R_{+}}
k(\alpha)
g_{\varepsilon}(\langle \alpha,x\rangle)
\alpha.
\end{align*}
Let $L_{k}:=
\Big(
|R_{+}|\sum_{\alpha \in R_{+}}|k(\alpha)|^{2}|\alpha|^{4}
\Big)^{1/2}.$
Then it follows from \eqref{g_1}, \eqref{g_2}, and \eqref{g_3} that
\begin{align}
& |f_{k,\varepsilon}(x)-f_{k,\varepsilon}(y)|
\leq
L_{k}
\varepsilon^{-2}|x-y|,~\text{ for any } x,y \in \real^{d},
\label{f_1}\\
&\langle x, f_{k,\varepsilon}(x) \rangle
\leq
\sum_{\alpha \in R_{+}} k(\alpha),~\text{ for any }  x \in \real^{d},
\label{f_4}\\
&\langle x-y, f_{k,\varepsilon}(x)-f_{k,\varepsilon}(y) \rangle
\leq 0,~ \text{ for any } x,y \in \real^{d},
\label{f_2}\\
&|f_{k}(x)-f_{k,\varepsilon}(x)|^{2}
\leq
\varepsilon^{2}
|R_{+}|
\sum_{\alpha \in R_{+}}
\frac{|k(\alpha)|^{2}|\alpha|^{2}}{\langle \alpha,x \rangle^{4}},~ \text{ for any } x \in \weyl.
\label{f_3}
\end{align}
If the drift coefficient $b$ is bounded and one-sided Lipschitz, then thanks to the global Lipschitz continuity of $f_{k,\varepsilon}$, the following SDE  has a unique strong solution for any $\epsilon >0$,
\begin{align*}
\rd X_{\varepsilon}
=
\rd B(t)
+
f_{k,\varepsilon}(X_{\varepsilon}(t))
\rd t
+
b(t,X_{\varepsilon}(t))
\rd t,~X_{\varepsilon}(0)=X(0)=x(0) \in \weyl.
\end{align*}
Then we prove that $X_{\varepsilon}$ approximates $X$ in $L^{p}$ sense.

\begin{Lem}\label{Lem_app_X}
Suppose that $\min_{\alpha \in R_{+}} \nu(\alpha) \geq 4$ and $b$ is bounded and one-sided Lipschitz.
For any $p \in [1,\min_{\alpha \in R_{+}} \nu(\alpha)/2)$, there exists $C_{p}>0$ such that for any $\varepsilon>0$,
\begin{align*}
\e\left[
\sup_{0 \leq t \leq T}
\left|
X(t)
-
X_{\varepsilon}(t)
\right|^{p}
\right]^{1/p}
\leq
C_{p}\varepsilon.
\end{align*}
For $b \equiv 0$, the statement holds also for $p = \min_{\alpha \in R_{+}} \nu(\alpha)/2$.
\end{Lem}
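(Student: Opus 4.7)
The plan is to exploit the synchronous coupling: since both SDEs are driven by the same Brownian motion with identity diffusion coefficient, the difference $X(t) - X_\varepsilon(t)$ is a continuous process of finite variation, and It\^o's formula (with no martingale part) yields
\[
|X(t) - X_\varepsilon(t)|^2 = 2\int_0^t \langle X(s) - X_\varepsilon(s),\, [f_k(X(s)) - f_{k,\varepsilon}(X_\varepsilon(s))] + [b(s,X(s)) - b(s,X_\varepsilon(s))] \rangle \rd s.
\]
I would decompose $f_k(X) - f_{k,\varepsilon}(X_\varepsilon) = [f_k(X) - f_{k,\varepsilon}(X)] + [f_{k,\varepsilon}(X) - f_{k,\varepsilon}(X_\varepsilon)]$; the second bracket contributes a nonpositive inner product against $X - X_\varepsilon$ by \eqref{f_2}, the $b$-term is dominated by $K|X - X_\varepsilon|^2$ thanks to the one-sided Lipschitz hypothesis, and Young's inequality $2\langle u,v \rangle \leq |u|^2 + |v|^2$ isolates the first bracket as the genuine error.

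These observations combine to give the pathwise integral inequality
\[
|X(t) - X_\varepsilon(t)|^2 \leq (2K+1)\int_0^t |X(s) - X_\varepsilon(s)|^2 \rd s + \int_0^T |f_k(X(s)) - f_{k,\varepsilon}(X(s))|^2 \rd s,
\]
to which Gronwall's lemma applies, yielding
\[
\sup_{0 \leq t \leq T} |X(t) - X_\varepsilon(t)|^2 \leq e^{(2K+1)T} \int_0^T |f_k(X(s)) - f_{k,\varepsilon}(X(s))|^2 \rd s.
\]
I would then invoke the pointwise error bound \eqref{f_3}, which replaces the integrand by $\varepsilon^2 |R_+| \sum_{\alpha \in R_+} |k(\alpha)|^2 |\alpha|^2 \langle \alpha, X(s)\rangle^{-4}$.

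For $p \geq 2$, I would raise the previous inequality to the $(p/2)$-th power, apply H\"older in time together with the subadditivity $\bigl(\sum_\alpha a_\alpha\bigr)^{p/2} \leq |R_+|^{p/2-1} \sum_\alpha a_\alpha^{p/2}$, and take expectations to reach
\[
\e\!\left[\sup_{0 \leq t \leq T} |X(t) - X_\varepsilon(t)|^p\right] \leq C_p\, \varepsilon^p \sum_{\alpha \in R_+} \int_0^T \e\bigl[\langle \alpha, X(s) \rangle^{-2p}\bigr] \rd s.
\]
Lemma \ref{inv_moment_0} then closes the argument: the bounded-drift estimate \eqref{inv_moment_2} is applicable whenever $2p < \nu(\alpha)$ for every $\alpha$, i.e., $p < \min_\alpha \nu(\alpha)/2$, while the driftless version \eqref{inv_moment_1} admits $p \in (0,\nu(\alpha)]$, so the endpoint $p = \min_\alpha \nu(\alpha)/2$ is recovered when $b \equiv 0$. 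For $p \in [1,2)$, Jensen's inequality $\e[Y^{p/2}] \leq \e[Y]^{p/2}$ applied with $Y = \sup_t |X - X_\varepsilon|^2$ reduces the claim to the case $p = 2$.

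The main delicacy I anticipate is threading the exponents through the H\"older and subadditivity steps so that the inverse moment bounds of Lemma \ref{inv_moment_0} are invoked at their sharp boundaries; in particular, attaining the endpoint $p = \min_\alpha \nu(\alpha)/2$ in the driftless case hinges on using \eqref{inv_moment_1} (which permits equality) rather than \eqref{inv_moment_2}, and on verifying that the Hölder-in-time bookkeeping degenerates correctly as $p \downarrow 2$ so that the $p \in [1,2)$ reduction matches the $p \geq 2$ constants.
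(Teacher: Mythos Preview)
Your proof is correct and follows essentially the same route as the paper: the same synchronous-coupling It\^o computation, the same decomposition $f_k(X)-f_{k,\varepsilon}(X_\varepsilon)=[f_k(X)-f_{k,\varepsilon}(X)]+[f_{k,\varepsilon}(X)-f_{k,\varepsilon}(X_\varepsilon)]$, the same use of \eqref{f_2}, \eqref{f_3}, the one-sided Lipschitz bound on $b$, and the inverse-moment Lemma~\ref{inv_moment_0} to close. The only cosmetic difference is that you apply Gronwall pathwise before raising to the $(p/2)$-th power and taking expectations, whereas the paper first passes to $\e[\sup_{u\le t}|\cdot|^p]$ and then applies Gronwall in expectation; both orderings are valid and yield the same conclusion.
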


\begin{proof}
Thanks to H\"older's inequality, it is sufficient to prove the statement for $p \geq 2$.
By using It\^o's formula, estimate \eqref{f_2}, and Young's inequality $2\langle a, b \rangle \leq |a|^{2}+|b|^{2}$, we have
\begin{align*}
\left|
X(t)
-
X_{\varepsilon}(t)
\right|^{2}
&=
2\int_{0}^{t}
\langle X(s)-X_{\varepsilon}(s), f_{k}(X(s))-f_{k,\varepsilon}(X(s)) \rangle
\rd s
\\&\quad
+
2\int_{0}^{t}
\langle X(s)-X_{\varepsilon}(s), f_{k,\varepsilon}(X(s))-f_{k,\varepsilon}(X_{\varepsilon}(s)) \rangle
\rd s
\\&\quad
+
2\int_{0}^{t}
\langle X(s)-X_{\varepsilon}(s), b(s,X(s))-b(s,X_{\varepsilon}(s)) \rangle
\rd s
\\&\leq
2\int_{0}^{t}
|X(s)-X_{\varepsilon}(s)|
\cdot
|f_{k}(X(s))-f_{k,\varepsilon}(X(s))|
\rd s
+
2K\int_{0}^{t}
|X(s)-X_{\varepsilon}(s)|^{2}
\rd s
\\&\leq
(1+2K)
\int_{0}^{t}
|X(s)-X_{\varepsilon}(s)|^{2}
\rd s
+
\int_{0}^{t}
|f_{k}(X(s))-f_{k,\varepsilon}(X(s))|^{2}
\rd s.
\end{align*}
Since $X(t) \in \weyl$, a.s., applying \eqref{f_3} and Lemma \ref{inv_moment_0}, we have
\begin{align*}
\e\left[
\sup_{0 \leq u \leq t}
\left|
X(u)
-
X_{\varepsilon}(u)
\right|^{p}
\right]
&\leq
T^{p/2-1}(1+2K)^{p/2}
\int_{0}^{t}
\e\left[
\sup_{0 \leq u \leq s}
\left|
X(u)
-
X_{\varepsilon}(u)
\right|^{p}
\right]
\rd s
\\&\quad+
\varepsilon^{p}
T^{p/2-1}
|R_{+}|^{p/2}
\sum_{\alpha \in R_{+}}
|k(\alpha)|^{p}|\alpha|^{p}
\int_{0}^{T}
\e[\langle \alpha,X(s) \rangle^{-2p}]
\rd s
\\&\leq
C_{p}
\int_{0}^{t}
\e\left[
\sup_{0 \leq u \leq s}
\left|
X(u)
-
X_{\varepsilon}(u)
\right|^{p}
\right]
\rd s
+
C_{p}\varepsilon^{p}.
\end{align*}
By using Gronwall's inequality, we obtain the desired result.
\end{proof}

Now, we propose a semi-implicit Euler--Maruyama approximation scheme for $X_{\varepsilon}$.
The construction of such a scheme is based on the following lemma.

\begin{Lem}\label{Lem_tr_eq}
Suppose that  $x \in \real^d$, $\varepsilon>0$ and $h \in (0,  \frac{ \varepsilon^{2}}{L_{k}})$.
Let $F_{k,\varepsilon}(y):=x+hf_{k,\varepsilon}(y)$, and  $y^{(n)}=F_{k,\varepsilon}(y^{(n-1)})$  for any $y^{(0)} \in \real^{d}$ and $n \geq 1$. 
Then the sequence $(y^{(n)})$ converges to  $y^{*} \in \real^{d}$, which is the unique solution of the following  equation
\begin{equation*}
y = x + hf_{k,\varepsilon}(y).
\end{equation*}
In particular, if we choose $y^{(0)}=x$, then it holds that
\begin{align}\label{eq_tr_2}
|y^{*}-y^{(n)}|
\leq
\frac{\sum_{\alpha \in R_{+}}k(\alpha)|\alpha|}{L_{k}(1-L_{k}\varepsilon^{-2}h)}
\cdot \varepsilon\cdot (L_{k}\varepsilon^{-2}h)^{n}.
\end{align}
\end{Lem}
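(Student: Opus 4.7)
The plan is to prove both claims by a direct application of the Banach fixed-point theorem, using estimate \eqref{f_1} to obtain the contraction constant. I first observe that for all $y, y' \in \real^d$,
\[
|F_{k,\varepsilon}(y) - F_{k,\varepsilon}(y')| = h\,|f_{k,\varepsilon}(y) - f_{k,\varepsilon}(y')| \leq L_{k}\varepsilon^{-2} h\,|y-y'|,
\]
so that the assumption $h < \varepsilon^{2}/L_{k}$ makes $q := L_{k}\varepsilon^{-2} h$ strictly smaller than $1$. Since $\real^d$ equipped with the Euclidean norm is complete, Banach's theorem immediately yields the existence of a unique fixed point $y^{*} \in \real^d$ of $F_{k,\varepsilon}$, i.e., the unique solution of $y = x + h f_{k,\varepsilon}(y)$, and guarantees $y^{(n)} \to y^{*}$ for any starting point.

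For the quantitative rate when $y^{(0)} = x$, I would apply the standard geometric telescoping estimate
\[
|y^{*} - y^{(n)}| \leq \sum_{m \geq n} |y^{(m+1)} - y^{(m)}| \leq \sum_{m \geq n} q^{m}\,|y^{(1)} - y^{(0)}| = \frac{q^{n}}{1-q}\,|y^{(1)} - y^{(0)}|,
\]
which holds because the contraction property gives $|y^{(m+1)} - y^{(m)}| \leq q^{m}|y^{(1)} - y^{(0)}|$ by induction. With $y^{(0)} = x$ we have $y^{(1)} - y^{(0)} = h f_{k,\varepsilon}(x)$, so bounding the initial displacement via $g_{\varepsilon} \leq \varepsilon^{-1}$ gives
\[
|y^{(1)} - y^{(0)}| \leq h \sum_{\alpha \in R_{+}} k(\alpha)\, g_{\varepsilon}(\langle \alpha, x\rangle)\,|\alpha| \leq h\,\varepsilon^{-1} \sum_{\alpha \in R_{+}} k(\alpha)\,|\alpha|.
\]

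Finally, I would invoke the hypothesis $h < \varepsilon^{2}/L_{k}$ once more to convert the factor $h\varepsilon^{-1}$ into $\varepsilon/L_{k}$, yielding
\[
|y^{*} - y^{(n)}| \leq \frac{q^{n}}{1-q} \cdot \frac{\varepsilon}{L_{k}} \sum_{\alpha \in R_{+}} k(\alpha)\,|\alpha|,
\]
which is exactly \eqref{eq_tr_2}. There is no real obstacle here: the only subtlety is recognising that \eqref{f_1} is the correct Lipschitz bound to plug into Banach's theorem, and then using the bound on $h$ a second time after the geometric-series estimate in order to trade the step-size $h$ for $\varepsilon^{2}/L_{k}$ and recover the stated form of the constant.
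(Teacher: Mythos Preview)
Your proof is correct and follows exactly the same route as the paper: both invoke \eqref{f_1} to see that $F_{k,\varepsilon}$ is a contraction with constant $q=L_{k}\varepsilon^{-2}h<1$ and then appeal to the Banach fixed-point theorem. The paper's proof is very terse (it just says ``standard argument of the fixed point theorem''), whereas you spell out the geometric telescoping estimate and the bound $|y^{(1)}-y^{(0)}|=h|f_{k,\varepsilon}(x)|\leq h\varepsilon^{-1}\sum_{\alpha}k(\alpha)|\alpha|$, then use $h\varepsilon^{-1}<\varepsilon/L_{k}$ to land on \eqref{eq_tr_2}; these are precisely the details the paper leaves implicit.
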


\begin{proof}
It follows from  \eqref{f_1} that 
$|F_{k,\varepsilon}(y)-F_{k,\varepsilon}(y')|
\leq
L_{k}
\varepsilon^{-2}
h
|y - y'|, \text{ for any } y, y' \in \mathbb{R}^d.$
Thus, $F_{k,\varepsilon}$ is a contraction map.
By using the standard argument of the fixed point theorem, we obtain the desired result. 
\end{proof}

The solution is continuous with respect to the truncation parameter. Indeed, let $\varepsilon_1, \varepsilon_2$ be constants satisfying $\varepsilon_1 >  \varepsilon_2 >  \sqrt{hL_k}$. For each $i = 1, 2$, let $y_i \in \mathbb{R}^d$ be a solution  to 
$y = x + h f_{k,\varepsilon_i}(y).$ We have 
\begin{align*}
|y_1- y_2| &\leq h| f_{k,\varepsilon_1}(y_1) - f_{k,\varepsilon_2}(y_1)| + h| f_{k,\varepsilon_2}(y_1) - f_{k,\varepsilon_2}(y_2)| \leq \sum_{\alpha \in R_+} \frac{ |k(\alpha)| |\alpha|}{L_k} |\varepsilon_1 - \varepsilon_2| + h  L_k \varepsilon_2^{-2} |y_1-y_2|.
\end{align*}
Then 
$
|y_1- y_2| \leq \frac{1}{1 -h L_k \varepsilon_2^{-2}}  \sum_{\alpha \in R_+} \frac{ |k(\alpha)| |\alpha|}{L_k} |\varepsilon_1 - \varepsilon_2|.
$

Now thanks to Lemma \ref{Lem_tr_eq}, we can define a semi-implicit truncated Euler--Maruyama approximation scheme $X_{\varepsilon}^{(n)}=(X_{\varepsilon}^{(n)}(t_{\ell}))_{\ell=0}^{n}$ of $X$ as follows: $X_{\varepsilon}^{(n)}(0):=X(0)=x(0)$ and for each $\ell=0,\ldots,n-1$, $X_{\varepsilon}^{(n)}(t_{\ell+1})$ is the unique solution in $\real^{d}$ of the following equation:
\begin{align}\label{IEM_0}
X_{\varepsilon}^{(n)}(t_{\ell+1})
&=
X_{\varepsilon}^{(n)}(t_{\ell})
+
\Delta B_{\ell}
+
\left\{
f_{k,\varepsilon}(X_{\varepsilon}^{(n)}(t_{\ell+1}))
+
b(t_{\ell},X_{\varepsilon}^{(n)}(t_{\ell}))
\right\}
\Delta t.
\end{align}
Note that by Lemma \ref{Lem_tr_eq}, the solution of equation \eqref{IEM_0} can be calculated very quickly using the iteration method.

If the drift coefficient $b$ is Lipschitz continuous in space and $1/2$-H\"older continuous in time, then we have the following result on the rate of strong convergence.

\begin{Thm}\label{main_4}
Suppose that $\min_{\alpha \in R_{+}} \nu(\alpha) > 6$ and the drift coefficient $b$ is bounded, and is Lipschitz continuous in space and $1/2$-H\"older continuous in time.
Assume that $\varepsilon \in (0, \min_{\alpha \in R_{+}} \langle \alpha,x(0) \rangle)$ and $\Delta t=T/n < \varepsilon^{2}/L_{k}$.
Then, for any $p \in [1,\min_{\alpha \in R_{+}} \nu(\alpha)/6)$, there exists $C_{p}>0$ such that for any $n \in \n$,
\begin{align*}
\e
\left[
\sup_{\ell =1,\ldots,n}
\left|
X(t_{\ell})
-
X_{\varepsilon}^{(n)}(t_{\ell})
\right|^{p}
\right]^{1/p}
\leq
C_{p}
\left\{
\varepsilon
+
\frac{1}{\sqrt{n}}
\right\}.
\end{align*}
\end{Thm}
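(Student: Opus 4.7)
The plan is to decompose the error through the auxiliary process $X_\varepsilon$ of Section \ref{subsec_tr_EM}: by the triangle inequality,
\begin{align*}
\sup_{\ell=1,\ldots,n}|X(t_\ell)-X_\varepsilon^{(n)}(t_\ell)|
\leq
\sup_{0\leq t\leq T}|X(t)-X_\varepsilon(t)|
+
\sup_{\ell=1,\ldots,n}|X_\varepsilon(t_\ell)-X_\varepsilon^{(n)}(t_\ell)|.
\end{align*}
The standing assumptions $\min_{\alpha \in R_+}\nu(\alpha)>6$ and $p<\min_{\alpha \in R_+}\nu(\alpha)/6$ in particular force $p<\min_{\alpha \in R_+}\nu(\alpha)/2$ and $\min_{\alpha \in R_+}\nu(\alpha)\geq 4$, so Lemma \ref{Lem_app_X} bounds the $L^p$-norm of the first term on the right by $C_p\varepsilon$. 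Everything thus reduces to showing that the discretization error of the scheme \eqref{IEM_0} applied to $X_\varepsilon$ is of order $1/\sqrt n$ in $L^p$ sup-norm, with a constant independent of $\varepsilon$.

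For this remaining estimate I would set $e_\ell:=X_\varepsilon(t_\ell)-X_\varepsilon^{(n)}(t_\ell)$ and subtract the exact and approximate recursions on $[t_\ell,t_{\ell+1}]$ to obtain
\begin{align*}
e_{\ell+1}-e_\ell
=
\int_{t_\ell}^{t_{\ell+1}}\bigl[f_{k,\varepsilon}(X_\varepsilon(s))-f_{k,\varepsilon}(X_\varepsilon^{(n)}(t_{\ell+1}))\bigr]\rd s
+
\int_{t_\ell}^{t_{\ell+1}}\bigl[b(s,X_\varepsilon(s))-b(t_\ell,X_\varepsilon^{(n)}(t_\ell))\bigr]\rd s.
\end{align*}
Taking the inner product with $e_{\ell+1}$ and using the polarization identity $\langle e_{\ell+1},e_{\ell+1}-e_\ell\rangle\geq\tfrac{1}{2}(|e_{\ell+1}|^2-|e_\ell|^2)$, I would insert $\pm f_{k,\varepsilon}(X_\varepsilon(t_{\ell+1}))$ into the first integrand so that the diagonal piece $\langle e_{\ell+1}, f_{k,\varepsilon}(X_\varepsilon(t_{\ell+1}))-f_{k,\varepsilon}(X_\varepsilon^{(n)}(t_{\ell+1}))\rangle$ is non-positive by the one-sided Lipschitz property \eqref{f_2}. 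The surviving contributions only record the time oscillation of $X_\varepsilon$ on $[t_\ell,t_{\ell+1}]$ through $f_{k,\varepsilon}$, the $1/2$-H\"older regularity of $b$ in time, and the spatial Lipschitz regularity of $b$, which produces a term of the form $|e_\ell|^2$ absorbable into a discrete Gronwall argument. Summing over $\ell$, taking the sup, and raising to the $p/2$ power via H\"older's inequality then yields the target rate $1/\sqrt n$.

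The main obstacle is $\varepsilon$-uniformity of the constant, since the global Lipschitz constant of $f_{k,\varepsilon}$ is $L_k\varepsilon^{-2}$ and its sup-norm is of order $\varepsilon^{-1}$, both of which blow up as $\varepsilon\to 0$. The trick is to never invoke \eqref{f_1} directly but to control the oscillation term via a Kolmogorov-type time modulus $\e[|X_\varepsilon(s)-X_\varepsilon(t)|^p]^{1/p}\leq C_p\sqrt{|t-s|}$ of $X_\varepsilon$, whose constant is $\varepsilon$-independent because it stems from $\|f_{k,\varepsilon}\|_\infty\leq\varepsilon^{-1}\sum_\alpha k(\alpha)|\alpha|$ paired with the time increment $\Delta t\leq\varepsilon^2/L_k$. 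The standing constraint $\Delta t<\varepsilon^2/L_k$ converts every a priori dangerous ratio ($\Delta t\cdot L_k\varepsilon^{-2}$, $\sqrt{\Delta t}/\varepsilon$, or $\Delta t\cdot\|f_{k,\varepsilon}\|_\infty$) into a universal constant. The threshold $p<\min_{\alpha \in R_+}\nu(\alpha)/6$ traces back to feeding the inverse moment bounds of Lemma \ref{inv_moment_0}, valid up to exponent $\min_\alpha\nu(\alpha)/3$, through the quadratic step in the sup-norm estimate, which doubles the required inverse moment exponent.
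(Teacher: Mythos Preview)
Your overall architecture matches the paper: split via $X_\varepsilon$, invoke Lemma \ref{Lem_app_X} for the first piece, and for the second piece plug $V=X_\varepsilon$, $g=f_{k,\varepsilon}$ into the framework of Lemma \ref{Lem_general_0}. The part that does not go through is your treatment of the remainder $r_{X_\varepsilon}(f_{k,\varepsilon},\ell)$, which is the whole difficulty.

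You assert that the oscillation of $f_{k,\varepsilon}(X_\varepsilon(\cdot))$ over $[t_\ell,t_{\ell+1}]$ can be controlled via the Kolmogorov modulus $\e[|X_\varepsilon(s)-X_\varepsilon(t)|^p]^{1/p}\leq C_p\sqrt{|t-s|}$, and that the constraint $\Delta t<\varepsilon^2/L_k$ renders all constants $\varepsilon$-free. The Kolmogorov modulus of $X_\varepsilon$ is indeed $\varepsilon$-uniform for increments of size at most $\Delta t$, but passing from $|X_\varepsilon(s)-X_\varepsilon(t)|$ to $|f_{k,\varepsilon}(X_\varepsilon(s))-f_{k,\varepsilon}(X_\varepsilon(t))|$ forces precisely the Lipschitz bound \eqref{f_1} you say you want to avoid, and the resulting factor is $L_k\varepsilon^{-2}\sqrt{\Delta t}$. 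Under $\Delta t<\varepsilon^2/L_k$ this quantity is only bounded by $\sqrt{L_k}\,\varepsilon^{-1}$, not by a universal constant, so the remainder estimate does not close. None of the three ratios you list is the relevant one here.

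The paper resolves this in Lemma \ref{Lem_app_f}, whose proof introduces the stopping time $\tau_\varepsilon=\min_{\alpha\in R_+}\inf\{s>0:\langle\alpha,X(s)\rangle=\varepsilon\}$ and splits on $\{\kappa_n(t)<\tau_\varepsilon\}$ versus its complement. On the good event, $X_\varepsilon=X$ and $g_\varepsilon(\langle\alpha,X_\varepsilon\rangle)=\langle\alpha,X\rangle^{-1}$, so one can appeal to the inverse moment bounds of Lemma \ref{inv_moment_0} and the Kolmogorov bound of Lemma \ref{Lem_Kol_0} for the \emph{original} process $X$; a three-way H\"older splitting there uses exponents up to $3p<\nu(\alpha)$. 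On the bad event one does pay the $\varepsilon^{-2p}$ Lipschitz factor, but it is cancelled by $\p(\tau_\varepsilon\leq T)^{2p/q}\leq C\varepsilon^{2p}$, obtained from the sup-inverse-moment estimate \eqref{inv_sup_moment_2} for a $q\in(2p,\min_\alpha\nu(\alpha)/3)$. It is exactly this requirement $2p<q<\min_\alpha\nu(\alpha)/3$ that produces the threshold $p<\min_\alpha\nu(\alpha)/6$; your explanation in terms of a ``quadratic step doubling the inverse moment exponent'' does not reflect this mechanism.
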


\begin{Rem}
\begin{itemize}
\item[(i)]
An appropriate choice for the parameter $\varepsilon$ is $\varepsilon:=\sqrt{cL_{k}\Delta t}$ for some $c>1$.

\item[(ii)]
Higham, Mao, and Stuart \cite{HMS} considered a backward Euler--Maruyama scheme.
Under one-sided Lipschitz and local Lipschitz conditions for the drift coefficient (see Assumption 3.1, 4.1 in \cite{HMS}), they showed that the scheme converges in $L^{2}$-sup norm at the rate of order $1/2$ (see Theorem 4.4 in \cite{HMS}).
In our setting, the drift coefficient $f_{k,\varepsilon}$ is global Lipschitz continuous, but its Lipschitz constant depends on the parameter $\varepsilon$.
Thus, their result does not directly imply the estimate in Theorem \ref{main_4}.

\end{itemize}
\end{Rem}

\subsection{Proof of main results}\label{subsec_proof}

In order to prove the main results, we first consider a general setting.
Let $D $ be an open subset of $\real^{d}$. 
Let $V=(V(t))_{t \in [0,T]}$ be a solution of the SDE of the form
\begin{align}\label{SDE_general_0}
\rd V(t)
=
\rd B(t)
+
\left\{
g(V(t))
+
b(t,V(t))
\right\}
\rd t,~V(0)=v(0) \in D.
\end{align}

We need the following assumptions for the coefficients $g$ and $b$.

\begin{Ass}\label{Ass_general}
\begin{itemize}
\item[(i)]
$\p(V(t) \in D,~\forall t \in [0,T])=1$;
\item[(ii)]
There exists $K>0$ such that for any $x \in D$, $\langle x,g(x) \rangle \leq K$.
Moreover, for any $x,y \in D$, it holds that $\langle x-y,g(x)-g(y)) \rangle \leq 0$;
\item[(iii)]
$b$ is bounded and Lipschitz continuous in space and $1/2$-H\"older continuous in time;
\item[(iv)]
For any $a \in \real^{d}$ and sufficiently small $h>0$, the system of equation $x=a+hg(x)$ has a unique solution in $D$.
\end{itemize}
\end{Ass}

If $V=X$ (resp. $V=X_{\varepsilon}$), then Assumption \ref{Ass_general} holds  with $D=\weyl$, $g=f_{k}$ (resp. $D=\real^{d}$ and $g=f_{k,\varepsilon})$, and $K=\sum_{\alpha \in R_{+}} k(\alpha)$.
Indeed, it holds that for any $x \in \weyl$, $\langle x,f_{k} \rangle=\sum_{\alpha \in R_{+}} k(\alpha)$ and for any $x \in \real^{d}$, from \eqref{f_4}, $\langle x,f_{k,\varepsilon} \rangle \leq \sum_{\alpha \in R_{+}} k(\alpha)$.

Under Assumption \ref{Ass_general}, we define a semi-implicit Euler--Maruyama scheme $V^{(n)}=(V^{(n)}(t_{\ell}))_{\ell=0}^{n}$ for $V$ as follows: $V^{(n)}(0):=V(0)=v(0)$ and for each $\ell=0,\ldots,n-1$, $V^{(n)}(t_{\ell+1})$ is the unique solution in $D$ of the following equation:
\begin{align}\label{IEM_1}
V^{(n)}(t_{\ell+1})
&=
V^{(n)}(t_{\ell})
+
\Delta B_{\ell}
+
\left\{
g(V^{(n)}(t_{\ell+1}))
+
b(t_{\ell},V^{(n)}(t_{\ell}))
\right\}
\Delta t.
\end{align}
We define the estimation error by $e_{V}(\ell):=V(t_{\ell})-V^{(n)}(t_{\ell})$ for $\ell=0,\ldots,n$.
Then we use the following representation for $e(\ell+1)$:
\begin{align}\label{def_e}
e_{V}(\ell+1)
&=
e_{V}(\ell)
+
\left\{
g(V(t_{\ell+1}))
-
g(V^{(n)}(t_{\ell+1}))
\right\}
\Delta t\notag\\
&
\quad+
\left\{
b(t_{\ell}, V(t_{\ell}))
-
b(t_{\ell}, V^{(n)}(t_{\ell}))
\right\}
\Delta t
+
r_{V}(\ell),
\end{align}
where the reminder part $r_{V}(\ell):=r_{V}(g,\ell)+r_{V}(b,\ell)$ is defined by
\begin{align}
r_{V}(g,\ell)
&:=
\int_{t_{\ell}}^{t_{\ell+1}}
\left\{
g(V(s))
-
g(V(t_{\ell+1}))
\right\}
\rd s,
\label{def_rf}
\\
r_{V}(b,\ell)
&:=
\int_{t_{\ell}}^{t_{\ell+1}}
\left\{
b(s,V(s))-b(t_{\ell},V(t_{\ell}))
\right\}
\rd s.
\label{def_rb}
\end{align}
Then, we obtain the following estimate.

\begin{Lem}\label{Lem_general_0}
Suppose that Assumption \ref{Ass_general} holds.
Then there exists $C_{0}>0$ such that
\begin{align*}
\sup_{\ell =1,\ldots,n}
|e_{V}(\ell)|
\leq
C_{0}
\sum_{j=0}^{n-1}
|r_{V}(j)|,~\text{a.s.}
\end{align*}
\end{Lem}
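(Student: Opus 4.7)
The plan is a classical pathwise Gronwall-type argument for semi-implicit Euler schemes, exploiting the fact that the stiff coefficient $g$ is treated implicitly so that its monotonicity can absorb the error.

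First I would rewrite the error recursion \eqref{def_e} as
\begin{align*}
e_{V}(\ell+1)-e_{V}(\ell)
=
\{g(V(t_{\ell+1}))-g(V^{(n)}(t_{\ell+1}))\}\Delta t
+\{b(t_{\ell},V(t_{\ell}))-b(t_{\ell},V^{(n)}(t_{\ell}))\}\Delta t
+r_{V}(\ell),
\end{align*}
and take the Euclidean inner product of both sides with $e_{V}(\ell+1)=V(t_{\ell+1})-V^{(n)}(t_{\ell+1})$. The crucial point is that both $V(t_{\ell+1})$ and $V^{(n)}(t_{\ell+1})$ lie in $D$ almost surely --- the former by Assumption \ref{Ass_general} (i) and the latter by Assumption \ref{Ass_general} (iv) --- so the monotonicity condition in Assumption \ref{Ass_general} (ii) yields
\begin{align*}
\langle e_{V}(\ell+1),g(V(t_{\ell+1}))-g(V^{(n)}(t_{\ell+1}))\rangle\le 0.
\end{align*}

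Using the identity $\langle e_{V}(\ell+1),e_{V}(\ell+1)-e_{V}(\ell)\rangle=|e_{V}(\ell+1)|^{2}-\langle e_{V}(\ell+1),e_{V}(\ell)\rangle$, the Lipschitz property of $b$, and two applications of Cauchy--Schwarz, I obtain
\begin{align*}
|e_{V}(\ell+1)|^{2}
\le
|e_{V}(\ell+1)|\,|e_{V}(\ell)|
+\|b\|_{\mathrm{Lip}}\Delta t\,|e_{V}(\ell)|\,|e_{V}(\ell+1)|
+|e_{V}(\ell+1)|\,|r_{V}(\ell)|.
\end{align*}
Dividing by $|e_{V}(\ell+1)|$ (the conclusion is trivial on the event where it vanishes) gives the scalar linear recursion
\begin{align*}
|e_{V}(\ell+1)|\le (1+\|b\|_{\mathrm{Lip}}\Delta t)|e_{V}(\ell)|+|r_{V}(\ell)|.
\end{align*}

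Finally, since $e_{V}(0)=0$, iterating this recursion and bounding $(1+\|b\|_{\mathrm{Lip}}\Delta t)^{n}\le e^{\|b\|_{\mathrm{Lip}}T}$ yields
\begin{align*}
|e_{V}(\ell)|\le e^{\|b\|_{\mathrm{Lip}}T}\sum_{j=0}^{\ell-1}|r_{V}(j)|\le e^{\|b\|_{\mathrm{Lip}}T}\sum_{j=0}^{n-1}|r_{V}(j)|,
\end{align*}
and taking the supremum over $\ell$ gives the claim with $C_{0}=e^{\|b\|_{\mathrm{Lip}}T}$. The only subtlety is handling the trivial event $\{|e_{V}(\ell+1)|=0\}$ when dividing; otherwise the proof is routine once one recognizes that the implicit treatment of $g$ converts its one-sided Lipschitz property into an unconditional contractive factor, with no step-size restriction required for stability.
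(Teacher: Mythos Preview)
Your proof is correct and, in fact, a bit cleaner than the paper's. Both arguments begin identically: take the inner product of the error recursion with $e_{V}(\ell+1)$ and use the monotonicity of $g$ from Assumption~\ref{Ass_general}~(ii) to drop the implicit term. The paper then applies Young's inequality $\langle e_{V}(\ell+1),e_{V}(\ell)\rangle\le \tfrac12|e_{V}(\ell+1)|^{2}+\tfrac12|e_{V}(\ell)|^{2}$ to obtain a recursion for $|e_{V}(\ell)|^{2}$, telescopes, takes the supremum, divides by $\sup_{\ell}|e_{V}(\ell)|$, and finishes with a discrete Gronwall inequality; this yields $C_{0}=2\bigl(1+2\|b\|_{\mathrm{Lip}}T\,e^{2\|b\|_{\mathrm{Lip}}T}\bigr)$. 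You instead divide by $|e_{V}(\ell+1)|$ immediately to obtain the linear scalar recursion $|e_{V}(\ell+1)|\le (1+\|b\|_{\mathrm{Lip}}\Delta t)|e_{V}(\ell)|+|r_{V}(\ell)|$, which iterates directly and gives the sharper constant $C_{0}=e^{\|b\|_{\mathrm{Lip}}T}$. Your handling of the event $\{|e_{V}(\ell+1)|=0\}$ is fine: the linear recursion holds trivially there since the right-hand side is nonnegative, so the iteration goes through pathwise for every $\ell$.
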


\begin{proof}
If follows from \eqref{def_e} and Assumption \ref{Ass_general} (ii), (iii) that
\begin{align*}
|e_{V}(\ell+1)|^2
&=
\langle
e_{V}(\ell+1),
e_{V}(\ell)
\rangle
+
\left\langle
e_{V}(\ell+1),
g(V(t_{\ell+1}))
-
g(V^{(n)}(t_{\ell+1}))
\right\rangle
\Delta t\\
&
\quad+
\left\langle
e_{V}(\ell+1),
b(t_{\ell},V(t_{\ell}))
-
b(t_{\ell},V^{(n)}(t_{\ell}))
\right\rangle
\Delta t
+
\langle
e_{V}(\ell+1),
r_{V}(\ell)
\rangle\\
&
\leq
\frac{1}{2}
|e_{V}(\ell+1)|^2
+
\frac{1}{2}
|e_{V}(\ell)|^2
+
\|b\|_{\mathrm{Lip}}
|e_{V}(\ell+1)|
|e_{V}(\ell)|
\Delta t
+
2|e_{V}({\ell+1})|
|r_{V}(\ell)|,
\end{align*}
where the last estimate follows from the Young inequality.
Therefore, we have
\begin{align*}
|e_{V}(\ell+1)|^2
&\leq
|e_{V}(\ell)|^2
+
2\|b\|_{\mathrm{Lip}}
|e_{V}(\ell+1)|
|e_{V}(\ell)|
\Delta t
+
2|e_{V}({\ell+1})|
|r_{V}(\ell)|.
\end{align*}
Thus, by applying the induction method, we derive the following estimate.
\begin{align*}
|e_{V}(\ell+1)|^2
&\leq
2\|b\|_{\mathrm{Lip}}
\sum_{j=0}^{\ell}
|e_{V}(j+1)|
|e_{V}(j)|
\Delta t
+
2
\sum_{j=0}^{\ell}
|e_{V}(j+1)|
|r_{V}(j)|.
\end{align*}
By taking the supremum with respect to $\ell$, we obtain for any $m=1,\ldots,n$
\begin{align*}
\sup_{\ell=1,\ldots, m}
|e_{V}(\ell)|^2
&\leq
2\|b\|_{\mathrm{Lip}}
\sum_{j=0}^{m-1}
|e_{V}(j+1)|
|e_{V}(j)|
\Delta t
+
2
\sum_{j=0}^{m-1}
|e_{V}(j+1)|
|r_{V}(j)|
\\
&
\leq
2\|b\|_{\mathrm{Lip}}
\sup_{\ell=1,\ldots, m}
|e_{V}(\ell)|
\sum_{j=0}^{m-1}
|e_{V}(j)|
\Delta t
+
2
\sup_{\ell=1,\ldots, m}
|e_{V}(\ell)|
\sum_{j=0}^{m-1}
|r_{V}(j)|,
\end{align*}
and thus,
\begin{align*}
\sup_{\ell=1,\ldots, m}
|e_{V}(\ell)|
\leq
2\|b\|_{\mathrm{Lip}}
\sum_{j=0}^{m-1}
\sup_{\ell=1,\ldots, j}
|e_{V}(\ell)|
\Delta t
+
2\sum_{j=0}^{m-1}
|r_{V}(j)|.
\end{align*}
By using discrete Gronwall's inequality (e.g., Chapter XIV, Theorem 1 and Remark 1,2 in \cite{MiPeFi}, page 436-437), we obtain,
\begin{align*}
\sup_{\ell=1,\ldots, n}|e_{V}(\ell)|
&
\leq 
2
\left\{
1
+
\sum_{j=0}^{n-1}
2\|b\|_{\mathrm{Lip}} \Delta t
\exp
\left(
\sum_{j=0}^{n-1}
2\|b\|_{\mathrm{Lip}}
\Delta t
\right)
\right\}
\sum_{j=0}^{n-1}
|r_V(j)|
\leq
C_0
\sum_{j=0}^{n-1}
|r_{V}(j)|,
\end{align*}
where
$
C_0
:=
2
\left\{
1
+
2\|b\|_{\mathrm{Lip}} T
\exp
\left(
	2\|b\|_{\mathrm{Lip}} T
\right)
\right\}
$.
Therefore, we get the assertion.
\end{proof}

\subsubsection*{Proof of Theorem \ref{main_1}}
By using Lemma \ref{Lem_general_0} with $V=X$ and $V^{(n)}=X^{(n)}$, it is sufficient to consider the reminder terms $r_{X}(f_{k},\ell)$ and $r_{X}(b,\ell)$ defined in \eqref{def_rf} and \eqref{def_rb}, respectively.
For each time point $t > 0$, let $\eta_n(t)$ represent the last time discretization point up to time $t$, and let $\kappa_n(t)$ represent the first time discretization point after $t$. More precisely,
 $\eta_{n}(t):=\ell \Delta t$ and $\kappa_{n}(t):= (\ell+1) \Delta t$ if $t \in [\ell \Delta t, (\ell+1) \Delta t)$.
It follows from Lemma \ref{Lem_general_0} that 
\begin{align*}
\sup_{\ell=1,\ldots, n}|e_{X}(\ell)|  &\leq C_0 \sum_{j=0}^{n-1} |r_X(j)| \\ 
&\leq
C_0
\int_{0}^{T}
\left|
f_k(X(s))
-
f_k(X(\kappa_n(s)))
\right|
\rd s
+
C_0
\int_{0}^{T}
\left|
b(s,X(s))
-
b(\eta_n(s),X(\eta_n(s)))
\right|
\rd s.
\end{align*}
If $b=0$ (resp. $b \neq 0$), then for any $p \in [1,\min_{\alpha \in R_{+}}\nu(\alpha)/3]$ (resp. $p \in [1,\min_{\alpha \in R_{+}}\nu(\alpha)/3)$), we have
\begin{align*}
\sup_{\ell=1,\ldots, n}|e_{X}(\ell)|^{p}
&\leq
C_0^{p}T^{p-1}
\int_{0}^{T}
\left|
f_k(X(s))
-
f_k(X(\kappa_n(s)))
\right|^{p}
\rd s
\notag \\&\quad
+
C_0^{p}T^{p-1}
\int_{0}^{T}
\left|
b(s,X(s))
-
b(\eta_n(s),X(\eta_n(s)))
\right|^{p}
\rd s \notag\\
&\leq
C_0^{p}(T \cdot |R_+|)^{p-1}
\sum_{\alpha \in R_{+}}
k(\alpha)^p
\int_{0}^{T}
\frac
{
	|\alpha|^p
	|X(s)-X(\kappa_n(s))|^p
}
{
	\langle \alpha, X(s) \rangle^p
	\langle \alpha, X(\kappa_n(s)) \rangle^p
}
\rd s \notag\\
&\quad
+
\|b\|_{\mathrm{Lip}}^{p}
(2T)^{p-1}
\int_{0}^{T}
\left\{
|s-\eta_n(s)|^{p}
+
|X(s)-X(\eta_n(s))|^{p}
\right\}
\rd s.
\end{align*}
Hence, by taking the expectation and using H\"older's inequality, Lemma \ref{inv_moment_0} and Lemma \ref{Lem_Kol_0}, we have
\begin{align*}
&
\e\left[
\sup_{\ell=1,\ldots, n}|e_{X}(\ell)|^{p}
\right]\\
&\leq
C_0^{p}(T \cdot |R_+|)^{p-1}
\sum_{\alpha \in R_{+}}
k(\alpha)^p
|\alpha|^{p}\\
&\quad\quad \times
\int_{0}^{T}
\e\left[
|X(s)-X(\eta_n(s))|^{3p}
\right]^{1/3}
\e\left[
\langle \alpha, X(s) \rangle^{-3p}
\right]^{1/3}
\e\left[
\langle \alpha, X(\kappa_n(s)) \rangle^{-3p}
\right]^{1/3}
\rd s\\
&\quad
+
\|b\|_{\mathrm{Lip}}^{p}
(2T)^{p-1}
\int_{0}^{T}
\left\{
|s-\eta_n(s)|^{p}
+
\e
\left[
|X(s)-X(\eta_n(s))|^{p}
\right]
\right\}
\rd s\\
&\leq
\frac{C_{p}}{n^{p/2}},
\end{align*}
for some constant $C_{p}>0$.
This concludes the proof.
\qed

\subsubsection*{Proof of Theorem \ref{main_2}}
Before proving Theorem \ref{main_2}, we consider the reminder term $r_{X}(\ell)$, $\ell = 0,\ldots,n$.
We define
\begin{align*}
h_{i}^{(1)}(s,x)
&:=
-
\left(
\left\langle
\nabla f_{k,i}(x),
f_{k}(x)
+
b(s,x)
\right\rangle
+
\frac{\Delta f_{k,i}(x)}{2}
\right),\\
h_{i}^{(2)}(s,x)
&:=
\left(
\left\langle
\nabla b_{i}(s,x),
f_{k}(x)
+
b(s,x)
\right\rangle
+
\partial_s b_{i}(s,x)
+
\frac{\Delta b_{i}(s,x)}{2}
\right ),\\
h_{i}^{(3)}(s,x)
&:=
(h_{i,1}^{(3)}(s,x),\ldots,h_{i,d}^{(3)}(s,x))^{\top}
=
\nabla f_{k,i}(x),\\
h_{i}^{(4)}(s,x)
&:=
(h_{i,1}^{(4)}(s,x),\ldots,h_{i,d}^{(4)}(s,x))^{\top}
=
\nabla b_{i}(s,x),
\end{align*}
for $(s,x) \in [t_{\ell},t_{\ell+1}] \times \weyl$.
By using It\^o's formula, the reminder term $r_{X}(\ell)=(r_{X,1}(\ell),\ldots,r_{X,d}(\ell))^{\top}$ can be decomposed as follows
\begin{align*}
r_{X,i}(\ell)
=
r_{X,i}^{(1)}(\ell)
+
r_{X,i}^{(2)}(\ell)
+
r_{X,i}^{(3)}(\ell)
+
r_{X,i}^{(4)}(\ell),~
i=1,\ldots,d,
\end{align*}
where
$r_{X,i}^{(j)}(\ell):=
\int_{t_{\ell} }^{t_{\ell+1}}
\int_{t}^{t_{\ell+1}}
h^{(j)}_i(s, X_s) 
\rd s
\rd t$
for $j \in \{1, 2\}$, and $ r_{X,i}^{(j)}(\ell)
:= \int_{t_{\ell} }^{t_{\ell+1}}
\int_{t}^{t_{\ell+1}}
\langle
h^{(j)}_i(s, X_s) ,
\rd B(s)
\rangle
\rd t,$
for $ j \in \{3, 4\}$.
We have the following estimates.
\begin{Lem}\label{Lem_Remi_0}
Suppose that $\min_{\alpha \in R_{+}} \nu(\alpha) \geq 8$ and $b \in C^{1,2}_b([0,T] \times \weyl;\real^d)$.
For any $p \in [2,\min_{\alpha \in R_{+}} \nu(\alpha)/4)$, there exists $C_p>0$ such that for all $i=1,\ldots,d$ and $\ell = 0,\ldots, n,$
\begin{align*}
\e[
|r_{X,i}^{(1)}(\ell)|^{p}
]
+
\e[
|r_{X,i}^{(2)}(\ell)|^{p}
]
&\leq
C_p
(\Delta t)^{2p}
\quad\text{and}\quad
\e[
|r_{X,i}^{(3)}(\ell)|^{p}
]
+
\e[
|r_{X,i}^{(4)}(\ell)|^{p}
]
\leq
C_p
(\Delta t)^{\frac{3p}{2}}.
\end{align*}
For $b \equiv 0$, above estimates hold also for $p= \min_{\alpha \in R_{+}} \nu(\alpha)/4$.
\end{Lem}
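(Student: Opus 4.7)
The approach splits naturally into two cases. The deterministic-time double integrals $r_{X,i}^{(1)}(\ell)$ and $r_{X,i}^{(2)}(\ell)$ should yield the $(\Delta t)^{2p}$ bound via H\"older's inequality on the triangle $\{t_\ell \leq t \leq u \leq t_{\ell+1}\}$ of area $(\Delta t)^{2}/2$, so that $\e[|r_{X,i}^{(j)}(\ell)|^{p}] \leq (\Delta t)^{2p-1}\int_{t_\ell}^{t_{\ell+1}}\!\int_{s}^{t_{\ell+1}}\e[|h_i^{(j)}(u, X(u))|^{p}]\,\rd u\,\rd s$ for $j \in \{1,2\}$. The stochastic double integrals $r_{X,i}^{(3)}(\ell),\, r_{X,i}^{(4)}(\ell)$ should first be rewritten by the stochastic Fubini theorem as $r_{X,i}^{(j)}(\ell) = \int_{t_\ell}^{t_{\ell+1}}(s-t_\ell)\langle h^{(j)}_i(s,X(s)),\rd B(s)\rangle$, after which the Burkholder--Davis--Gundy inequality combined with Jensen (using $p \geq 2$) yields the $(\Delta t)^{3p/2}$ factor. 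In both cases the problem reduces to bounding $\sup_{s \in [0,T]}\e[|h^{(j)}_i(s, X(s))|^{p}]$.

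The key observation is the order of singularity in each $h^{(j)}$. Differentiating $f_{k,i}$ one gets $\nabla f_{k,i}(x) = -\sum_{\alpha \in R_{+}} k(\alpha)\alpha_i \alpha\langle\alpha, x\rangle^{-2}$ and $\Delta f_{k,i}(x) = 2\sum_{\alpha \in R_{+}} k(\alpha)\alpha_i |\alpha|^{2}\langle \alpha, x\rangle^{-3}$; together with $b \in C^{1,2}_{b}$, this shows that $h^{(1)}_i$ is a sum of terms of the shapes $\langle\alpha, x\rangle^{-2}\langle\beta, x\rangle^{-1}$, $\langle\alpha, x\rangle^{-2}$, and $\langle\alpha, x\rangle^{-3}$, whose worst singularity is of total order three; $h^{(3)}_i = \nabla f_{k,i}$ is of order two; while $h^{(2)}_i$ and $h^{(4)}_i$ contribute at most a first-order singularity via $\nabla b \cdot f_k$, the remaining summands being bounded. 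Finiteness of each $\e[|h^{(j)}_i|^p]$ then follows from Lemma \ref{inv_moment_0}: the only slightly nontrivial estimate is the cross moment, where H\"older with exponents $(3/2, 3)$ yields $\e[\langle\alpha, X(s)\rangle^{-2p}\langle\beta, X(s)\rangle^{-p}] \leq \e[\langle\alpha, X(s)\rangle^{-3p}]^{2/3}\e[\langle\beta, X(s)\rangle^{-3p}]^{1/3}$, finite as soon as $3p < \min_{\alpha}\nu(\alpha)$, by \eqref{inv_moment_2}. The hypothesis $p < \min_{\alpha}\nu(\alpha)/4$ gives room to spare, and the endpoint $p = \min_{\alpha}\nu(\alpha)/4$ for $b \equiv 0$ is reachable using the sharper \eqref{inv_moment_1}, which permits equality $3p \leq \min_{\alpha}\nu(\alpha)$.

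The main obstacle is essentially bookkeeping. Each $h^{(j)}_i$ is a sum over $R_{+}$ (or over $R_{+} \times R_{+}$ for the product $\nabla f_{k,i} \cdot f_k$) of products of inverse-inner-product factors, and one must pick H\"older exponents term by term so that each such product is majorised by a single inverse moment $\e[\langle\alpha, X(s)\rangle^{-q p}]$ with $q p$ inside the range permitted by Lemma \ref{inv_moment_0}. Once this decomposition is carried out, the remaining steps are routine applications of Jensen, H\"older, BDG, and the inverse-moment bounds established earlier.
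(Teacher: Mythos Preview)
Your proposal is correct and follows the same overall skeleton as the paper: bound each $h_i^{(j)}$ pointwise in terms of inverse inner products $\langle\alpha,x\rangle^{-m}$, invoke the inverse-moment Lemma~\ref{inv_moment_0}, and then extract the powers of $\Delta t$ via H\"older/Jensen for $j\in\{1,2\}$ and BDG for $j\in\{3,4\}$. There are, however, two genuine technical differences worth noting.

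First, for the cross term $\langle\nabla f_{k,i},f_k\rangle$ in $h_i^{(1)}$, the paper applies Young's inequality $|\nabla f_{k,i}|\,|f_k|\leq \tfrac12|\nabla f_{k,i}|^2+\tfrac12|f_k|^2$, which raises the worst singularity to order four (from $|\nabla f_{k,i}|^2\sim\sum\langle\alpha,x\rangle^{-4}$) and forces the call to Lemma~\ref{inv_moment_0} at level $4p$; this is precisely why the hypothesis reads $p<\min_\alpha\nu(\alpha)/4$. Your direct H\"older splitting of the order-three product $\langle\alpha,x\rangle^{-2}\langle\beta,x\rangle^{-1}$ keeps the worst singularity at order three, so you only need $3p$ in the inverse-moment lemma and indeed have ``room to spare''. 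Your route is therefore slightly sharper: it would prove the same conclusion under the weaker constraint $p<\min_\alpha\nu(\alpha)/3$.

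Second, for $r_{X,i}^{(3)}$ and $r_{X,i}^{(4)}$ you first collapse the double integral via stochastic Fubini into a single It\^o integral with weight $(s-t_\ell)$ and then apply BDG once, whereas the paper applies Jensen to the outer $\rd t$ integral and BDG to each inner stochastic integral. Both routes deliver the same factor $(\Delta t)^{3p/2}$; yours is marginally cleaner.
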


\begin{proof}
It holds that
\begin{align*}
|h_{i}^{(1)}(s,x)|
&\leq
|\nabla f_{k,i}(x)|
\{
|f_{k}(x)|
+
|b(s,x)|
\}
+
\frac{|\Delta f_{k,i}(x)|}{2}\\
&\leq
\|b\|_{\infty}
|\nabla f_{k,i}(x)|
+
\frac{|f_{k,i}(x)|^2}{2}
+
\frac{|\nabla f_{k,i}(x)|^2}{2}
+
\frac{|\Delta f_{k,i}(x)|}{2},\\
|h_{i}^{(2)}(s,x)|
&\leq
|\nabla b_{i}(x)|
\{
|f_{k}(x)|
+
|b(s,x)|
\}
+
\frac{|\Delta b_{i}(x)|}{2}\\
&\leq
\|\nabla b_i\|_{\infty}
|f_{k}(x)|
+
\|\nabla b_i\|_{\infty}
\|b\|_{\infty}
+
\frac{\|\Delta b_{i}\|_{\infty}}{2},\\
|h_{i}^{(3)}(s,x)|
&\leq
|\nabla f_{k,i}(x)|
,
\quad
|h_{i}^{(4)}(s,x)|
\leq
\|\nabla b_{i}\|_{\infty}.
\end{align*}
Recall that $f_k:\weyl \to \real^d$ is defined by \eqref{def_f}.
Then, the first and second-order derivatives of $f_k$  are given as follows:
\begin{align*}
\frac{\partial f_{k,i}(x)}{\partial x_j}
=
-
\sum_{\alpha \in R_{+}}
k(\alpha)
\frac{\alpha_{i} \alpha_{j} }{\langle \alpha, x \rangle^2}
\quad\text{and}\quad
\frac{\partial^{2} f_{k,i}(x)}{\partial x_j \partial x_m}
=
\sum_{\alpha \in R_{+}}
k(\alpha)
\frac{\alpha_{i} \alpha_{j} \alpha_{m}}{\langle \alpha, x \rangle^3}.
\end{align*}
Therefore, for any $p \geq 2$ by using Jensen' inequality, there exists $K_{p}>0$ such that, for any $x \in \weyl$,
\begin{align*}
|h_{i}^{(1)}(s,x)|^{p}
&\leq
K_{p}
\left(
\sum_{\alpha \in R_{+}}
\frac{1}{\langle \alpha,x \rangle^{2p}}
+
\sum_{\alpha \in R_{+}}
\frac{1}{\langle \alpha,x \rangle^{3p}}
+
\sum_{\alpha \in R_{+}}
\frac{1}{\langle \alpha,x \rangle^{4p}}
\right),\\
|h_{i}^{(2)}(s,x)|^{p}
&\leq
K_{p}
+
K_{p}
\sum_{\alpha \in R_{+}}
\frac{1}{\langle \alpha,x \rangle^{p}},\\
|h_{i}^{(3)}(s,x)|^{p}
&\leq
K_{p}
\sum_{\alpha \in R_{+}}
\frac{1}{\langle \alpha,x \rangle^{2p}},
\quad
|h_{i}^{(4)}(s,x)|^{p}.
\leq
K_{p}
\end{align*}
Hence, from Lemma \ref{inv_moment_0}, there exists $C_p>0$ such that
\begin{align*}
&
\e[|r_{X,i}^{(1)}(\ell)|^{p}]
+
\e[|r_{X,i}^{(2)}(\ell)|^{p}]\\
&\leq
(\Delta t)^{2(p-1)}
\int_{t_{\ell}}^{t_{\ell+1}}\rd t
\int_{t_{\ell}}^{t_{\ell+1}}\rd s 
\e\left[
	|h_{i}^{(1)}(s,X(s))|^{p}
	+|h_{i}^{(2)}(s,X(s))|^{p}
\right]
\leq
C_p
(\Delta t)^{2p},
\end{align*}
and by using Burkholder-Davis-Gundy's inequality,
\begin{align*}
&
\e[|r_{X,i}^{(3)}(\ell)|^{p}
+
\e[|r_{X,i}^{(4)}(\ell)|^{p}]\\
&\leq
(\Delta t)^{p-1}
\int_{t_{\ell} }^{t_{\ell+1}}
\e\left[
	\left|
		\int_{t}^{t_{\ell+1}}
			\left\langle
				h_{i}^{(3)}(s,X(s)),
				\rd B(s)
			\right\rangle
	\right|^{p}
	+
	\left|
		\int_{t_{\ell}}^{t}
			\left\langle
				h_{i}^{(4)}(s,X(s)),
				\rd B(s)
			\right\rangle
	\right|^p
\right]
\rd t \notag\\
&\leq
c_pd^{p-1}
(\Delta t)^{\frac{3p}{2}-2}
\sum_{j=1}^{d}
\int_{t_{\ell} }^{t_{\ell+1}} \rd t
	\int_{t_{\ell} }^{t_{\ell+1}} \rd s
		\e\left[
			|h_{i,j}^{(3)}(X(s))|^{p}
			+
			|h_{i,j}^{(4)}(X(s))|^{p}
		\right]
\leq
C_p
(\Delta t)^{\frac{3p}{2}},
\end{align*}
which concludes the proof.
\end{proof}

\begin{proof}[Proof of Theorem \ref{main_2}]
It is sufficient to prove the statement for $p \geq 4$.
From the representation \eqref{def_e}, we have
\begin{align*}
&
\left|
e_{X}(\ell+1)
-
\left (
f_k(X(t_{\ell+1}))
-
f_k(X^{(n)}(t_{\ell+1}))
\right )
\Delta t
\right|^2\\
&=
\left|
e_{X}(\ell)
+
\left (
b(t_{\ell},X(t_{\ell}))
-
b(t_{\ell},X^{(n)}(t_{\ell}))
\right )
\Delta t
+
r_{X}(\ell)
\right|^2 \notag
\end{align*}
and thus
\begin{align*}
|e_{X}(\ell+1)|^2
&=
|e_{X}(\ell)|^2
-
\left|
f_k(X(t_{\ell+1}))
-
f_k(X^{(n)}(t_{\ell+1}))
\right|^2
(\Delta t)^2
\\
&
\quad
+
\left|
b(t_{\ell},X(t_{\ell}))
-
b(t_{\ell},X^{(n)}(t_{\ell}))
\right|^2
(\Delta t)^2
+|r_{X}(\ell)|^2\\
&
\quad
+
2
\left \langle
e_{X}(\ell+1),
f_k(X(t_{\ell+1}))
-
f_k(X^{(n)}(t_{\ell+1}))
\right \rangle
\Delta t\\
&
\quad
+
2
\left \langle
e_{X}(\ell),
b(t_{\ell},X(t_{\ell}))
-
b(t_{\ell},X^{(n)}(t_{\ell}))
\right \rangle
\Delta t
+
2
\left\langle
e_{X}(\ell), r_{X}(\ell)
\right\rangle\\
&
\quad
+
2
\left
\langle
b(t_{\ell},X(t_{\ell}))
-
b(t_{\ell},X^{(n)}(t_{\ell})),
r_X(\ell)
\right
\rangle
\Delta t.
\end{align*}
We estimate each term of $|e_{X}(\ell+1)|^2$ as follows. First, from \eqref{f_OSL}, we have $\left \langle
e_{X}(\ell+1),
f_k(X(t_{\ell+1}))
-
f_k(X^{(n)}(t_{\ell+1}))
\right \rangle \leq 0$. 
Second, since $b$ is Lipschitz continuous, 
\begin{align*}
&\left| b(t_{\ell},X(t_{\ell})) - b(t_{\ell},X^{(n)}(t_{\ell}))
\right|^2 (\Delta t)^2 
+  2 \left \langle e_{X}(\ell), b(t_{\ell},X(t_{\ell})) - b(t_{\ell},X^{(n)}(t_{\ell})) \right \rangle \Delta t\\
&\leq   \|b\|_{\mathrm{Lip}}^2 |e_{X}(\ell)|^2 (\Delta t)^2  + 
2\|b\|_{Lip} |e_{X}(t_{\ell})|^2 \Delta t. 
\end{align*}
By Young's inequality, 
$$  2\left \langle b(t_{\ell},X(t_{\ell})) - b(t_{\ell},X^{(n)}(t_{\ell})), r_X(\ell) \right \rangle \Delta t  \leq  2\|b\|_{\mathrm{Lip}}
|e_{X}(t_{\ell})| |r_{X}(\ell)| \Delta t \leq 2 \|b\|^2_{\mathrm{Lip}}
|e_{X}(t_{\ell})|^2  (\Delta t)^2 + \frac 12 |r_{X}(\ell)|^2.$$ 
%
%
%
 These estimates, together with the fact that $T/n \leq 1$, imply that 
\begin{align*}
|e_{X}(\ell+1)|^2
&\leq
|e_{X}(\ell)|^2
+
C_1
|e_{X}(\ell)|^2
\Delta t
+
2
\left \langle
e_{X}(\ell),
r_{X}(\ell)
\right \rangle
+
\frac{3}{2}
|r_{X}(\ell)|^2,
\end{align*}
where
$
C_1
:=
3\|b\|_{\mathrm{Lip}}^2
+
2\|b\|_{\mathrm{Lip}}
$.
Thus, we obtain
\begin{align*}
|e_{X}(\ell)|^2
\leq
\sum_{j=0}^{\ell-1}
\left(
C_1
|e_{X}(j)|^2
\Delta t
+
2
\left \langle
	e_{X}(j),
	r_{X}(j)
\right \rangle
+
\frac{3}{2}
|r_{X}(j)|^2
\right).
\end{align*}
Hence for $p=2q\geq 4$ and $m = 0, \ldots, n$, we have
\begin{align}\label{first_1_1}
\sup_{\ell=0,\ldots,m}
|e_{X}(\ell)|^{2q}
&\leq
3^{q-1}
\Big\{
n^{q-1}
\sum_{j=0}^{m-1}
C_1^{q}
|e_{X}(j)|^{2q}
(\Delta t)^{q}
\notag\\&\hspace{1.5cm}
+
2^{2q-1}
\sup_{\ell=0,\ldots,m}
\left (
	|A_{\ell}|^{q}
	+
	|M_{\ell}|^{q}
\right)
+
\frac{3^{q}}{2^{q}}
n^{q-1}
\sum_{j=0}^{m-1}
|r_{X}(j)|^{2q}
\Big\},
\end{align}
where $A_{\ell}:=\sum_{j=0}^{\ell-1}\langle e_{X}(j), r_{X}^{(1)}(j) + r_{X}^{(2)}(j) \rangle$ and $M_{\ell}:=\sum_{j=0}^{\ell-1}\langle e_{X}(jh), r_{X}^{(3)}(j) + r_{X}^{(4)}(j) \rangle$.
Note that it follows from Lemma \ref{inv_moment_0} and the upper bound of $h_{i}^{(3)}(x)$ and $h_{i}^{(4)}(x)$ that
\begin{align*}
\e\left[
M_{\ell}
~\Big|~
\mathscr{F}_{t_{\ell-1}^{(n)}}
\right]
=
M_{\ell-1}
+
\left\langle
e_{X}(\ell-1),
\e\left[
r_{X}^{(3)}(\ell-1)
+
r_{X}^{(4)}(\ell-1)
~\Big|~
\mathscr{F}_{t_{\ell-1}^{(n)}}
\right]
\right\rangle
=M_{\ell-1}.
\end{align*}
Hence $(M_{\ell})_{\ell=1,\ldots,n}$ is a martingale with respect to the filtration $(\mathscr{F}_{t_{\ell}})_{\ell=0}^{n}$.
By using Burkholder-Davis-Gundy's inequality, since $q/2 \geq 1$ we have
\begin{align}\label{first_2}
\e\left[
\sup_{\ell=0,\ldots,m}
|M_{\ell}|^q
\right]
&\leq
c_q
\e\Big[
\Big\{
\sum_{j=0}^{\ell-1}
	|e_{X}(j)|^2 |r_{X}^{(3)}(j) + r_{X}^{(4)}(j)|^2 
\Big\}^{q/2}
\Big] \notag\\
&\leq
2^{q-1}
c_q
n^{\frac{q}{2}-1}
\sum_{j=0}^{\ell-1}
\e\left[
|e_{X}(j)|^{q}
\{
	|r_{X}^{(3)}(j)|^q
	+
	|r_{X}^{(4)}(j)|^{q}
\}
\right].
\end{align}
Therefore, by taking the expectation on \eqref{first_1_1}, we obtain from \eqref{first_2},
\begin{align*}
\e\left[
\sup_{\ell=0,\ldots,m}
|e_{X}(\ell)|^{2q}
\right]
&\leq
(3T)^{q-1}C_1^{q}
\sum_{u=0}^{m-1}
\e\left[
|e_{X}(j)|^{2q}
\right]
\Delta t
+
I_1(m)
+
I_2(m)
+
I_3(m),
\end{align*}
where
\begin{align*}
I_1(m)
&:=
3^{q-1}2^{3q-2}n^{q-1}
\sum_{j=0}^{m-1}
\e\left[
|e_{X}(j)|^{q}
\{
	|r_{X}^{(1)}(j)|^{q}
	+
	|r_{X}^{(2)}(j)|^{q}
\}
\right],
\\
I_2(m)
&:=
3^{q-1}2^{3q-2}
c_q
n^{\frac{q}{2}-1}
\sum_{j=0}^{m-1}
\e\left[
|e_{X}(j)|^{q}
\{
	|r_{X}^{(3)}(j)|^q
	+
	|r_{X}^{(4)}(j)|^{q}
\}
\right],
\\
I_3(m)
&:=
3^{2q-1}2^{-q}
n^{q-1}
\sum_{j=0}^{m-1}
\e[|r_{X}(j)|^{2q}].
\end{align*}
From Schwarz's inequality, the inequality $xy\leq x^2/2+y^2/2$ and Lemma \ref{Lem_Remi_0}, there exists $C_{q}>0$
\begin{align*}
I_{1}(m)
&\leq
3^{q-1}2^{3q-2}n^{q-1}
\sum_{j=0}^{m-1}
\e\left[
|e_{X}(j)|^{2q}
\right]^{1/2}
\left(
\e\left[
	|r_{X}^{(1)}(j)|^{2q}
\right]^{1/2}
+
\e\left[
|r_{X}^{(2)}(j)|^{2q}
\right]^{1/2}
\right)
\\
&
\leq
C_{q}
\sum_{j=0}^{m-1}
\e\left[
|e_{X}(j)|^{2q}
\right]^{1/2}
(\Delta t)^{q+1}
\leq
\frac{C_{q}}{2}
\sum_{j=0}^{m-1}
\e\left[
\sup_{\ell=0,\ldots,j}
|e_{X}(\ell)|^{2q}
\right]
\Delta t
+
\frac{C_{q} T}{2}
(\Delta t)^{2q}.
\end{align*}
and
\begin{align*}
I_2(m)
&\leq
3^{q-1}2^{3q-2}
c_q
n^{\frac{q}{2}-1}
\sum_{j=0}^{m-1}
\e\left[
	|e_{X}(j)|^{2q}
\right]^{1/2}
\left(
\e\left[
	|r_{X}^{(3)}(j)|^{2q}
\right]^{1/2}
+
\e\left[
	|r_{X}^{(4)}(j)|^{2q}
\right]^{1/2}
\right)\\
&\leq
C_{q}
\sum_{j=0}^{m-1}
\e\left[
|e_{X}(j)|^{2q}
\right]^{1/2}
(\Delta t)^{q+1}
\leq
\frac{C_{q}}{2}
\sum_{j=0}^{m-1}
\e\left[
\sup_{\ell=0,\ldots,j}
|e_{X}(\ell)|^{2q}
\right]
\Delta t
+
\frac{C_{q} T}{2}
(\Delta t)^{2q}
\end{align*}
and
\begin{align*}
I_3(m)
&\leq
C_q(\Delta t)^{2q}.
\end{align*}
Therefore, we obtain for some $C>0$,
\begin{align*}
\e\left[
\sup_{\ell=0,\ldots,m}|
e_{X}(\ell)|^{2q}
\right]
\leq
C
\sum_{j=0}^{m-1}
\e\left[
\sup_{\ell=0,\ldots,j}
|e_{X}(\ell)|^{2q}
\right]
\Delta t
+
C(\Delta t)^{2q}.
\end{align*}
By discrete Gronwall's inequality, we conclude the assertion.
\end{proof}

\subsubsection*{Proof of Theorem \ref{main_4}}

For proving Theorem \ref{main_4}, we use the following auxiliary estimate.
Recall that $\eta_{n}(t):=\ell \Delta t$ and $\kappa_{n}(t):= (\ell+1) \Delta t$ if $t \in [\ell \Delta t, (\ell+1) \Delta t)$.

\begin{Lem}\label{Lem_app_f}
Suppose that $\min_{\alpha \in R_{+}} \nu(\alpha)> 6$ and the drift coefficient $b$ is bounded, and is Lipschitz continuous in space and $1/2$-H\"older continuous in time.
Assume that $\varepsilon \in (0, \min_{\alpha \in R_{+}} \langle \alpha,x(0) \rangle)$ and $\Delta t=T/n < \varepsilon^{2}/L_{k}$.
For any $p \in [1,\min_{\alpha \in R_{+}} \nu(\alpha)/6)$ and $q \in [1,\infty)$, there exists $C_{p}>0$ and $C_{q}>0$ such that for any $n \in \n$,
\begin{align*}
\sup_{0\leq t \leq T}
\e\left[
\left|
f_{k,\varepsilon}(X_{\varepsilon}(t))
-
f_{k,\varepsilon}(X_{\varepsilon}(\kappa_n(t)))
\right|^{p}
\right]^{1/p}
&\leq
\frac{C_{p}}{\sqrt{n}},\\
\sup_{0\leq t \leq T}
\e\left[
\left|
b(t,X_{\varepsilon}(t))
-
b(\eta_{n}(t),X_{\varepsilon}(\eta_n(t)))
\right|^{q}
\right]^{1/q}
&
\leq
\frac{C_{q}}{\sqrt{n}}.
\end{align*}
\end{Lem}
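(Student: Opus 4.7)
The plan is to prove the second estimate first, which also delivers a Kolmogorov-type estimate for $X_{\varepsilon}$ that is essential for the first estimate. For the second estimate, the Lipschitz/H\"older regularity of $b$ reduces matters to controlling $\e[|X_{\varepsilon}(t) - X_{\varepsilon}(\eta_{n}(t))|^{q}]^{1/q}$. Writing the increment via the SDE satisfied by $X_{\varepsilon}$ produces three contributions: the Brownian increment of size $\sqrt{\Delta t}$, the bounded drift contribution of size $\Delta t$, and the $f_{k,\varepsilon}$ contribution. The latter is handled by the crude uniform bound $|f_{k,\varepsilon}(x)| \leq \sum_{\alpha \in R_{+}} k(\alpha)|\alpha|/\varepsilon$, which makes the drift integral at most $C\Delta t/\varepsilon$; the hypothesis $\Delta t < \varepsilon^{2}/L_{k}$ converts this into $C\sqrt{\Delta t/L_{k}}$. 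This delivers the rate $C_{q}/\sqrt{n}$ for every finite $q$, and combining with $|b(t,X_{\varepsilon}(t)) - b(\eta_{n}(t), X_{\varepsilon}(\eta_{n}(t)))| \leq \|b\|_{\mathrm{Lip}}(\sqrt{\Delta t} + |X_{\varepsilon}(t) - X_{\varepsilon}(\eta_{n}(t))|)$ finishes the second bound.

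For the first estimate, I would apply the sharper inequality
\begin{align*}
|f_{k,\varepsilon}(x) - f_{k,\varepsilon}(y)|
\leq \sum_{\alpha \in R_{+}} k(\alpha)|\alpha|^{2} \frac{|x - y|}{(\langle\alpha, x\rangle \vee \varepsilon)(\langle\alpha, y\rangle \vee \varepsilon)},
\end{align*}
which follows from the $1$-Lipschitz property of $z \mapsto z \vee \varepsilon$ applied to the identity $g_{\varepsilon}(z) - g_{\varepsilon}(w) = ((w\vee\varepsilon) - (z\vee\varepsilon))/((z\vee\varepsilon)(w\vee\varepsilon))$ together with $|\langle \alpha, x-y \rangle| \leq |\alpha|\,|x-y|$. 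H\"older's inequality with exponents $3,3,3$ then factors the estimate into the $L^{3p}$-norm of the increment $X_{\varepsilon}(t) - X_{\varepsilon}(\kappa_{n}(t))$, which is $O(\sqrt{\Delta t})$ by the Kolmogorov estimate from the previous paragraph, times two inverse-moment factors $\e[(\langle\alpha, X_{\varepsilon}\rangle \vee \varepsilon)^{-3p}]^{1/3}$.

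The main obstacle is establishing the uniform (in $\varepsilon$ and $n$) inverse-moment bound $\sup_{0 \leq t \leq T} \e[(\langle\alpha, X_{\varepsilon}(t)\rangle \vee \varepsilon)^{-3p}] \leq C$. I would prove this by coupling $X_{\varepsilon}$ to the exact solution $X$ through the same Brownian motion and splitting on the event $A_{t} := \{2|\alpha|\,|X(t) - X_{\varepsilon}(t)| \leq \langle\alpha, X(t)\rangle\}$. On $A_{t}$ one has $\langle\alpha, X_{\varepsilon}(t)\rangle \geq \langle\alpha, X(t)\rangle/2 > 0$, so the contribution is controlled by $2^{3p}\e[\langle\alpha, X(t)\rangle^{-3p}]$, which is finite by Lemma \ref{inv_moment_0}. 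On $A_{t}^{c}$ one uses the trivial bound $(\langle\alpha, X_{\varepsilon}\rangle \vee \varepsilon)^{-3p} \leq \varepsilon^{-3p}$ together with a Markov-type estimate $\p(A_{t}^{c}) \leq \e[(2|\alpha||X(t) - X_{\varepsilon}(t)|/\langle\alpha, X(t)\rangle)^{s}]$ for a large exponent $s$; applying H\"older's inequality to this expectation and invoking Lemma \ref{Lem_app_X} for the $X - X_{\varepsilon}$ factor (which contributes $C\varepsilon^{s}$) and Lemma \ref{inv_moment_0} for the $\langle\alpha, X\rangle^{-s}$ factor closes the bound. The hard part is the optimization of H\"older exponents in this last step: the resulting $\varepsilon^{s - 3p}$ must stay bounded while satisfying the integrability constraints from both lemmas, and it is precisely this tradeoff that produces the restriction $p \in [1, \min_{\alpha \in R_{+}}\nu(\alpha)/6)$ stated in the lemma.
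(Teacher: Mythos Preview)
Your treatment of the second estimate (and the Kolmogorov bound $\e[|X_\varepsilon(t)-X_\varepsilon(s)|^q]\le C_q(\Delta t)^{q/2}$ via $|f_{k,\varepsilon}|\le C/\varepsilon$ and $\Delta t<\varepsilon^2/L_k$) is correct and matches the paper.

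The gap is in the first estimate. Your plan is to apply H\"older with exponents $3,3,3$ \emph{first}, reducing the problem to a uniform bound on $\e[(\langle\alpha,X_\varepsilon(t)\rangle\vee\varepsilon)^{-3p}]$, and then to control this via the coupling $X\approx X_\varepsilon$. But this factorisation is too costly: on the bad event $A_t^c$ you are left with a penalty $\varepsilon^{-3p}$, and the best you can do for $\p(A_t^c)$ using Lemma~\ref{Lem_app_X} and Lemma~\ref{inv_moment_0} is $C\varepsilon^s$ with $s<\nu/3$ (optimising H\"older gives $as<\nu/2$, $a's<\nu$, hence $s<\nu/3$). The requirement $s\ge 3p$ then forces $p<\nu/9$, not the claimed $p<\nu/6$. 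The same ceiling $p<\nu/9$ arises even if you replace your event $A_t$ by the stopping-time event $\{\tau_\varepsilon\le t\}$ and use $\p(\tau_\varepsilon\le T)\le C\varepsilon^q$, $q<\nu/3$: the loss is already baked in once you have separated the three factors.

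The paper avoids this by splitting on the stopping time $\tau_\varepsilon:=\inf\{s:\min_\alpha\langle\alpha,X(s)\rangle=\varepsilon\}$ \emph{before} any H\"older step. On $\{\kappa_n(t)<\tau_\varepsilon\}$ one has the pathwise identity $X_\varepsilon\equiv X$ (since $f_{k,\varepsilon}(X)=f_k(X)$ there and the truncated SDE has a unique solution), so $g_\varepsilon(\langle\alpha,X_\varepsilon\rangle)=\langle\alpha,X\rangle^{-1}$ and your H\"older $3,3,3$ argument applies with the genuine inverse moments of $X$, needing only $3p<\nu$. On $\{\kappa_n(t)\ge\tau_\varepsilon\}$ the paper uses the \emph{crude} Lipschitz bound $|g_\varepsilon(x)-g_\varepsilon(y)|\le\varepsilon^{-2}|x-y|$, which costs only $\varepsilon^{-2p}$ (not $\varepsilon^{-3p}$), and then H\"older separates the increment from the indicator: the increment gives $(\Delta t)^{p/2}$, while $\p(\tau_\varepsilon\le T)^{2p/q}\le C\varepsilon^{2p}$ via the sup-inverse-moment \eqref{inv_sup_moment_2} with $q\in(2p,\nu/3)$. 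The $\varepsilon^{-2p}$ and $\varepsilon^{2p}$ cancel exactly, and the constraint $2p<\nu/3$ gives the stated range $p<\nu/6$.
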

\begin{proof}
We first note that since $\sup_{x \in \real^{d}}|f_{k,\varepsilon}(x)|^{2} \leq \sum_{\alpha \in R_{+}}k(\alpha)^{2}|\alpha|^{2} \varepsilon^{-2}$, $b$ is bounded and $\Delta t < \varepsilon^{2}/L_{k}$, for any $q \in [q,\infty)$ and $0\leq s \leq t \leq T$ with $t-s \leq \Delta t$, it holds that
\begin{align}
\label{eq_lem_413}
\e\left[
|X_{\varepsilon}(t)-X_{\varepsilon}(s)|^{q}
\right]
&\leq
3^{q-1}
\e\left[
\left|
B(t)
-
B(s)
\right|^{q}
+
(t-s)^{q-1}
\int_{s}^{t}
|f_{k,\varepsilon}(X_{\varepsilon}(s))|^{q}
\rd s
+
(t-s)^{q-1}
\int_{s}^{t}
|b(s,X_{\varepsilon}(s))|^{q}
\rd s
\right]
\notag
\\&\leq
C_{q}(\Delta t)^{q/2},
\end{align}
for some $C_{q}$.
Therefore, by using the assumptions on the drift coefficient $b$, we obtain
\begin{align*}
\sup_{0\leq t \leq T}
\e\left[
\left|
b(t,X_{\varepsilon}(t))
-
b(\eta_{n}(t),X_{\varepsilon}(\eta_n(t)))
\right|^{q}
\right]^{1/q}
&
\leq
\frac{C_{q}}{\sqrt{n}},
\end{align*}
for some $C_{q}>0$.
Let $p \in [1,\min_{\alpha \in R_{+}} \nu(\alpha)/6)$.
By the definition of $f_{k,\varepsilon}$, it holds that
\begin{align*}
&
\sup_{0 \leq t \leq T}
\e\left[
\left|
f_{k,\varepsilon}(X_{\varepsilon}(t))
-
f_{k,\varepsilon}(X_{\varepsilon}(\kappa_n(t)))
\right|^{p}
\right]
\\&\leq
|R_{+}|^{p-1}
\sum_{\alpha \in R_{+}}
k(\alpha)^{p}
|\alpha|^{p}
\sup_{0 \leq t \leq T}
\e\left[
\left|
g_{\varepsilon}(\langle \alpha, X_{\varepsilon}(t) \rangle)
-
g_{\varepsilon}(\langle \alpha, X_{\varepsilon}(\kappa_n(t)) \rangle)
\right|^{p}
\right].
\end{align*}
We define a stopping times $\tau_{\varepsilon}^{\alpha}:=\inf\{s>0~;~\langle \alpha, X(s) \rangle=\varepsilon\}$ for $\alpha \in R_{+}$ and set $\tau_{\varepsilon}:=\min_{\alpha \in R_{+}} \tau_{\varepsilon}^{\alpha}$.

If $\kappa_{n}(t)<\tau_{\varepsilon}$, then $\min_{\alpha \in R_{+}} \min_{s \in [0,\kappa_{n}(t)]}\langle \alpha,X(s) \rangle >\varepsilon$.
Hence it holds that for any $s \in [0,\kappa_{n}(t)]$, $X(s)=X_{\varepsilon}(s)$ and $g_{\varepsilon}(\langle \alpha,X_{\varepsilon}(s) \rangle)=\langle \alpha,X(s) \rangle^{-1}$.
Therefore, by using Lemma \ref{inv_moment_0} and Lemma \ref{Lem_Kol_0}, we have
\begin{align*}
&\sup_{0 \leq t \leq T}
\e\left[
\left|
g_{\varepsilon}(\langle \alpha, X_{\varepsilon}(t) \rangle)
-
g_{\varepsilon}(\langle \alpha, X_{\varepsilon}(\kappa_n(t)) \rangle)
\right|^{p}
\1_{\{\kappa_{n}(t) <\tau_{\varepsilon}\}}
\right]
\\&=
\sup_{0 \leq t \leq T}
\e\left[
\left|
\frac{1}{\langle \alpha,X(t)\rangle}
-
\frac{1}{\langle \alpha,X(\kappa_n(t))\rangle}
\right|^{p}
\1_{\{\kappa_{n}(t) <\tau_{\varepsilon}\}}
\right]
\leq
\sup_{0 \leq t \leq T}
\e\left[
\frac
{
|\alpha|^p
|X(t)-X(\kappa_n(t))|^p
}
{
\langle \alpha, X(t) \rangle^p
\langle \alpha, X(\kappa_n(t)) \rangle^p
}
\right]
\\&\leq
\sup_{0 \leq t \leq T}
\e\left[
|X(t)-X(\kappa_n(t))|^{3p}
\right]^{1/3}
\e\left[
\langle \alpha, X(t) \rangle^{-3p}
\right]^{1/3}
\e\left[
\langle \alpha, X(\kappa_n(t)) \rangle^{-3p}
\right]^{1/3}
\leq
C_{p}(\Delta t)^{p/2},
\end{align*}
for some $C_{p}>0$.

Now we consider the event $\kappa_{n}(t) \geq \tau_{\varepsilon}$. 
Let $q \in (2p,\min_{\alpha \in R_{+}} \nu(\alpha)/3)$ be fixed.
By using Lipschitz continuity of $f_{k,\varepsilon}$ \eqref{f_1}, H\"older's inequality, Cauchy--Schwarz inequality and \eqref{eq_lem_413}, we have
\begin{align*}
&
\e\left[
\left|
g_{\varepsilon}(\langle \alpha, X_{\varepsilon}(t) \rangle)
-
g_{\varepsilon}(\langle \alpha, X_{\varepsilon}(\kappa_n(t)) \rangle)
\right|^{p}
\1_{\{\kappa_{n}(t) \geq \tau_{\varepsilon}\}}
\right]
\\&\leq
C_{p}
\varepsilon^{-2p}
\e\left[
\left|
\langle \alpha, X_{\varepsilon}(t) \rangle
-
\langle \alpha, X_{\varepsilon}(\kappa_n(t)) \rangle
\right|^{pq/(q-2p)}
\right]^{(q-2p)/q}
\p(\kappa_{n}(t) \geq \tau_{\varepsilon})^{2p/q}
\\&\leq
C_{p}
|\alpha|^{p}
\varepsilon^{-2p}
\e\left[
\left|X_{\varepsilon}(t)-X_{\varepsilon}(\kappa_n(t))\right|^{pq/(q-2p)}
\right]^{(q-2p)/q}
\p(\kappa_{n}(t) \geq \tau_{\varepsilon})^{2p/q}
\\&\leq
C_{p,q}
\varepsilon^{-2p}
(\Delta t)^{p/2}
\p(\kappa_{n}(t) \geq \tau_{\varepsilon})^{2p/q},
\end{align*}
for some $C_{p}>0$ and $C_{p,q}>0$.
Since $\sup_{x \in \real^{d}}|f_{k,\varepsilon}(x)|^{2} \leq \sum_{\alpha \in R_{+}}k(\alpha)^{2}|\alpha|^{2} \varepsilon^{-2}$ and $\Delta t < \varepsilon^{2}/L_{k}$, we have
\begin{align*}
&\e\left[
\left|
\langle \alpha, X_{\varepsilon}(t) \rangle
-
\langle \alpha, X_{\varepsilon}(\kappa_n(t)) \rangle
\right|^{pq/(q-2p)}
\right]
\\&\leq
2^{p-1}
|\alpha|^{p}
\e\left[
\left|
B(t)
-
B(\kappa_{n}(t))
\right|^{pq/(q-2p)}
+
(\Delta t)^{pq/(q-2p) -1}
\int_{\eta_{n}(t)}^{\kappa_{n}(t)}
|f_{k,\varepsilon}(X_{\varepsilon}(s))|^{pq/(q-2p)}
\rd s
\right]
\leq
C_{p,q}(\Delta t)^{pq/(2q-4p)},
\end{align*}
for some $C_{p,q}>0$.
By the definition of the stopping times $\tau_{\varepsilon}^{\alpha}$, $\alpha \in R_{+}$ and \eqref{inv_sup_moment_1} for $b=0$ (resp. \eqref{inv_sup_moment_2} for $b\neq 0$), it follows from Jensen's inequality that
\begin{align*}
\p(\kappa_{n}(t) \geq \tau_{\varepsilon})
&=
\p\left(
\bigcup_{\alpha \in R_{+}}
\left\{
\min_{s \in [0,\kappa_{n}(t)]}
\langle \alpha, X(s) \rangle \leq \varepsilon
\right\}
\right)
\leq
\sum_{\alpha \in R_{+}}
\p\left(
\min_{s \in [0,\kappa_{n}(t)]}
\langle \alpha, X(s) \rangle \leq \varepsilon
\right)
\\&\leq
\varepsilon^{q}
\sum_{\alpha \in R_{+}}
\e\left[
\sup_{s \in [0,T]}
\langle \alpha, X(s) \rangle^{-q}
\right]
\leq
C_{q}
\varepsilon^{q},
\end{align*}
for some $C_{q}>0$.
Therefore we have
\begin{align*}
\sup_{0 \leq t \leq T}
\e\left[
\left|
g_{\varepsilon}(\langle \alpha, X_{\varepsilon}(t) \rangle)
-
g_{\varepsilon}(\langle \alpha, X_{\varepsilon}(\kappa_n(t)) \rangle)
\right|^{p}
\1_{\{\kappa_{n}(t) \geq \tau_{\varepsilon}\}}
\right]
&\leq
C_{p,q}
(\Delta t)^{p/2}.
\end{align*}
We conclude the assertion.
\end{proof}

\begin{proof}[Proof of Theorem \ref{main_4}]
By using Lemma \ref{Lem_app_X} and Lemma \ref{Lem_general_0} with $V=X_{\varepsilon}$ and $V^{(n)}=X_{\varepsilon}^{(n)}$, it is sufficient to consider the reminder terms $r_{X_{\varepsilon}}(f_{k,\varepsilon},\ell)$ and $r_{X_{\varepsilon}}(b,\ell)$ defined in \eqref{def_rf} and \eqref{def_rb}, respectively.
For any $p \in [2,\min_{\alpha \in R_{+}} \nu(\alpha)/6)$, we have
\begin{align*}
\sup_{\ell=1,\ldots, n}|e_{X_{\varepsilon}}(\ell)|^{p}
&\leq
C_0^{p}T^{p-1}
\int_{0}^{T}
\left|
f_{k,\varepsilon}(X_{\varepsilon}(s))
-
f_{k,\varepsilon}(X_{\varepsilon}(\kappa_n(s)))
\right|^{p}
\rd s
\notag\\&\quad+
C_0^{p}T^{p-1}
\int_{0}^{T}
\left|
b(s,X_{\varepsilon}(s))
-
b(\eta_n(s),X_{\varepsilon}(\eta_n(s)))
\right|^{p}
\rd s.
\end{align*}
Hence, by taking the expectation and using Lemma \ref{Lem_app_f}, we have
\begin{align*}
\e\left[
\sup_{\ell=1,\ldots, n}|e_{X_{\varepsilon}}(\ell)|^{p}
\right]
&\leq
\frac{C_{p}}{n^{p/2}},
\end{align*}
for some constant $C_{p}>0$.
This concludes the proof.
\end{proof}

\subsubsection*{Proof of Corollary \ref{main_3}}

We first provide a moment estimate for the semi-implicit Euler--Maruyama schemes $V^{(n)}$.
The proof is based on Lemma 2.5 in \cite{NeSz}.

\begin{Lem}\label{BCD_SDE}
Let $V$ be a solution of SDE \eqref{SDE_general_0} with $b=0$ and let $V^{(n)}$ be the semi-implicit Euler--Maruyama scheme for $V$ defined in \eqref{IEM_1} with $b=0$.
Under Assumption \ref{Ass_general}, for any $p>0$, there exists $C_{p}>0$ such that
\begin{align*}
\e\left[
\sup_{\ell=1,\ldots,n}
\left|V^{(n)}(t_{\ell})\right|^{p}
\right]
\leq C_{p}.
\end{align*}
\end{Lem}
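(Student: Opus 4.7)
The plan is to exploit the implicit step and the one-sided bound $\langle x, g(x) \rangle \leq K$ from Assumption \ref{Ass_general}(ii) to obtain a linear recursion for $|V^{(n)}(t_\ell)|^2$ with a martingale increment driven by $\Delta B_\ell$ evaluated against the previous (adapted) state $V^{(n)}(t_\ell)$. By H\"older's inequality it suffices to treat $p \geq 2$, since the result for smaller $p$ follows immediately.

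Rearranging the scheme \eqref{IEM_1} with $b=0$ as
$V^{(n)}(t_{\ell+1}) - \Delta t\, g(V^{(n)}(t_{\ell+1})) = V^{(n)}(t_\ell) + \Delta B_\ell$
and squaring both sides yields
\begin{align*}
|V^{(n)}(t_{\ell+1})|^2 + (\Delta t)^2 |g(V^{(n)}(t_{\ell+1}))|^2
&= |V^{(n)}(t_\ell)|^2 + 2\langle V^{(n)}(t_\ell), \Delta B_\ell\rangle + |\Delta B_\ell|^2 \\
&\quad + 2\Delta t\,\langle V^{(n)}(t_{\ell+1}), g(V^{(n)}(t_{\ell+1}))\rangle.
\end{align*}
Dropping the non-negative $(\Delta t)^2|g|^2$ term and applying $\langle V^{(n)}(t_{\ell+1}), g(V^{(n)}(t_{\ell+1}))\rangle \leq K$ gives the key recursion
\begin{equation*}
|V^{(n)}(t_{\ell+1})|^2 \leq |V^{(n)}(t_\ell)|^2 + 2M_\ell + |\Delta B_\ell|^2 + 2K\Delta t,
\end{equation*}
where $M_\ell := \langle V^{(n)}(t_\ell), \Delta B_\ell\rangle$ is a martingale difference with respect to $(\mathscr{F}_{t_\ell})$. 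Summing over $\ell$ yields
\begin{equation*}
\sup_{1\leq m \leq n}|V^{(n)}(t_m)|^2
\leq |v(0)|^2 + 2KT + \sum_{\ell=0}^{n-1}|\Delta B_\ell|^2 + 2\sup_{1\leq m \leq n}\Big|\sum_{\ell=0}^{m-1} M_\ell\Big|.
\end{equation*}

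Fix $p = 2q \geq 2$ and set $Y := \sup_{1\leq m \leq n}|V^{(n)}(t_m)|^2$. Raising the above display to the power $q$ and taking expectation, the only delicate term is the martingale supremum. By the discrete Burkholder--Davis--Gundy inequality followed by the crude bound $|V^{(n)}(t_\ell)|^2 \leq Y$,
\begin{equation*}
\e\Big[\sup_{m}\Big|\sum_{\ell=0}^{m-1} M_\ell\Big|^q\Big]
\leq C_q\,\e\Big[\Big(\sum_{\ell=0}^{n-1}|V^{(n)}(t_\ell)|^2|\Delta B_\ell|^2\Big)^{q/2}\Big]
\leq C_q\,\e\Big[Y^{q/2}\Big(\sum_{\ell=0}^{n-1}|\Delta B_\ell|^2\Big)^{q/2}\Big].
\end{equation*}
Cauchy--Schwarz splits this into $\e[Y^q]^{1/2}\,\e[(\sum_\ell|\Delta B_\ell|^2)^q]^{1/2}$, and the second factor is bounded by a constant independent of $n$ since $\e[(\sum_\ell|\Delta B_\ell|^2)^q] \leq n^{q-1}\sum_{\ell=0}^{n-1}\e[|\Delta B_\ell|^{2q}] \leq C_qT^q$ by Jensen. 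Young's inequality then absorbs $\varepsilon\,\e[Y^q]$ onto the left-hand side, yielding $\e[Y^q] \leq C_q$. The case $p \in (0,2)$ follows from H\"older. The main technical point is the Young-type absorption in the final step, which is why one first squares and then takes the $(p/2)$-th moment rather than working directly with $|V^{(n)}(t_\ell)|^p$.
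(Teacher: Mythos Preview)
Your approach is correct and in fact somewhat cleaner than the paper's. The paper introduces an auxiliary point $v^* \in D$ solving $v = \Delta t\, g(v)$, works with the shifted quantities $U_\ell := V^{(n)}(t_\ell) - v^*$, and uses the one-sided Lipschitz part of Assumption~\ref{Ass_general}(ii) to compare $g(V^{(n)}(t_{\ell+1}))$ with $g(v^*)$; this yields a recursion with a geometric factor $(1+\Delta t)^{\ell-j}$, after which the paper argues by induction on the exponent (showing $\e[\sup_\ell|U_\ell|^{2q}] < \infty \Rightarrow \e[\sup_\ell|U_\ell|^{4q}] < \infty$ via Doob's inequality for the discrete martingales $\sum_j \langle U_j, \Delta B_j\rangle$ and $\sum_j(|\Delta B_j|^2 - d\Delta t)$). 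By contrast, your direct use of the bound $\langle x, g(x)\rangle \leq K$ via the squaring identity avoids both the auxiliary point and the geometric factor, and the BDG--Cauchy--Schwarz--Young combination treats all exponents $q\geq 1$ at once rather than doubling them inductively.

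One point should be made explicit: the absorption step $\e[Y^q] \leq C_q + C_q\,\e[Y^q]^{1/2} \Rightarrow \e[Y^q] \leq C_q'$ is only valid if $\e[Y^q] < \infty$ a priori; otherwise both sides are $+\infty$ and nothing can be subtracted. This is easy to supply for each fixed $n$: your recursion $|V^{(n)}(t_{\ell+1})|^2 \leq |V^{(n)}(t_\ell)|^2 + 2\langle V^{(n)}(t_\ell), \Delta B_\ell\rangle + |\Delta B_\ell|^2 + 2K\Delta t$, together with the independence of $V^{(n)}(t_\ell)$ and $\Delta B_\ell$, shows by induction on $\ell$ that every $V^{(n)}(t_\ell)$ has finite moments of all orders, and hence so does $Y = \max_\ell |V^{(n)}(t_\ell)|^2$. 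With this (not $n$-uniform) finiteness in hand, your absorption argument then delivers the $n$-uniform bound. The paper's inductive doubling in $q$ is essentially its way of addressing the same issue.
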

\begin{proof}
Let $v^{*} \in D$ be the unique solution of the equation $v=\Delta t g(v)$.
It follows from Assumption \ref{Ass_general} (ii) that $|v^{*}|\leq \sqrt{\Delta tK}$.
Let $U_{\ell}:=V^{(n)}(t_{\ell})-v^{*}$, $\ell=0,\ldots,n$.
Then by using Assumption \ref{Ass_general} (ii), we have
\begin{align*}
|U_{\ell+1}|^{2}
&=
\langle U_{\ell+1}, U_{\ell}+\Delta B_{\ell}+\Delta t(g(V^{(n)}(t_{\ell+1}))-g(v^{*}))+\Delta t g(v^{*}) \rangle
\\&\leq
\langle U_{\ell+1}, U_{\ell}+\Delta B_{\ell}+\Delta t g(v^{*}) \rangle
\leq
\frac{1}{2}|U_{\ell+1}|^{2}
+
\frac{1}{2}|U_{\ell}+\Delta B_{\ell}+\Delta t g(v^{*})|^{2}.
\end{align*}
Thus by using Young's inequality $\langle a,b \rangle \leq \delta |a|^{2}+(4\delta)^{-1}|b|^{2}$ for $a,b \in \real^{d}$ and $\delta>0$, we have
\begin{align*}
|U_{\ell+1}|^{2}
&\leq
|U_{\ell}+\Delta B_{\ell}+\Delta t g(v^{*})|^{2}
\\&\leq
(1+\Delta t)
|U_{\ell}|^{2}
+
2\langle U_{\ell},\Delta B_{\ell} \rangle
+
2|\Delta B_{\ell}|^{2}
+
|v^{*}|^{2}\Delta t (1+2 \Delta t)
\\&\leq
(1+\Delta t)
|U_{\ell}|^{2}
+
2\langle U_{\ell}, \Delta B_{\ell} \rangle
+
2|\Delta B_{\ell}|^{2}
+
C_{0}\Delta t
\\&\leq
\sum_{j=0}^{\ell}
\left(
2\langle U_{j}, \Delta B_{j} \rangle
+
2|\Delta B_{j}|^{2}
+
C_{0}\Delta t
\right)
(1+\Delta t)^{\ell-j+1},
\end{align*}
for some $C_{0}>0$ independent from $n$.
Hence we have
\begin{align}\label{U_mart}
\sup_{\ell=1,\ldots,n}
|U_{\ell}|^{2}
\leq
e^{T}
\sum_{j=0}^{n-1}
\left(
2\langle U_{j}, \Delta B_{j} \rangle
+
2|\Delta B_{j}|^{2}
+
C_{0}\Delta t
\right).
\end{align}

Now we prove $\e[\sup_{\ell=1,\ldots,n}|U_{\ell}|^{2q}]<\infty$ by induction with respect to $q \in \n$.
For $q=1$, since $U_{j}$ and $\Delta B_{j}$ are independent, it follows from  \eqref{U_mart} that $\e[\sup_{\ell=1,\ldots,n}|U_{\ell}|^{2}]<\infty$.

Suppose that $\e[\sup_{\ell=1,\ldots,n}|U_{\ell}|^{2q}]<\infty$.
Define $M^{1}=(M_{\ell}^{1})_{\ell=0}^{n}$ and $M^{2}=(M_{\ell}^{1})_{\ell=0}^{n}$ by
\begin{align*}
M_{\ell}^{1}
:=
\sum_{j=0}^{\ell}
\langle U_{j}, \Delta B_{j} \rangle
\quad\text{and}\quad
M_{\ell}^{2}
:=
\sum_{j=0}^{\ell}
\left(
|\Delta B_{j}|^{2}
-
d\Delta t
\right).
\end{align*}
Then $M^{1}$ and $M^{2}$ are squared integrable martingales with respect to the filtration $(\mathscr{F}_{t_{\ell}})_{\ell=0}^{n}$.
Hence, by using Doob's inequality, we have
\begin{align*}
\e\left[
\sup_{\ell=1,\ldots,n}
|U_{\ell}|^{4q}
\right]
&\leq
2\cdot 3^{2q-1}e^{T}
\e\left[
\sup_{\ell=1,\ldots,n}
|M_{\ell}^{1}|^{2q}
+
\sup_{\ell=1,\ldots,n}
|M_{\ell}^{2}|^{2q}
\right]
+
Te^{T}3^{2q-1}
(C_{0}+d)^{2q}
\\&\leq
C_{q}
+
C_{q}
\e\left[
\sup_{\ell=1,\ldots,n}
|U_{\ell}|^{2q}
\right]
<\infty,
\end{align*}
for some $C_{q}>0$.
This concludes the assertion.
\end{proof}

\begin{proof}[Proof of Corollary \ref{main_3}]
Let $\delta>0$ be fixed.
Since $|x|^{2}-|y|^{2}=\sum_{i=1}^{d}(x_{i}+y_{i})(x_{i}-y_{i})$, by using H\"older's inequality and Lemma \ref{BCD_SDE} with $V=X$ and $V^{(n)}=X^{(n)}$, we have
\begin{align*}
&\e
\left[
\sup_{\ell =1,\ldots,n}
\left|
Y(t_{\ell})
-
Y^{(n)}(t_{\ell})
\right|^{p}
\right]
\leq
d^{p-1}
\sum_{i=1}^{d}
\e
\left[
\sup_{\ell =1,\ldots,n}
\left|
X_{i}(t_{\ell})
+
X_{i}^{(n)}(t_{\ell})
\right|^{p}
\left|
X_{i}(t_{\ell})
-
X_{i}^{(n)}(t_{\ell})
\right|^{p}
\right]
\\&\leq
d^{p-1}
\sum_{i=1}^{d}
\e
\left[
\sup_{\ell =1,\ldots,n}
\left|
X_{i}(t_{\ell})
+
X_{i}^{(n)}(t_{\ell})
\right|^{\frac{p(1+\delta)}{\delta}}
\right]^{\frac{\delta}{1+\delta}}
\e
\left[
\sup_{\ell =1,\ldots,n}
\left|
X_{i}(t_{\ell})
-
X_{i}^{(n)}(t_{\ell})
\right|^{p(1+\delta)}
\right]^{\frac{1}{1+\delta}}.
\end{align*}
Therefore, by using Theorem \ref{main_1} and Theorem \ref{main_2}, we conclude the assertion.
\end{proof}

\section*{Acknowledgements}
The first author was supported by the NAFOSTED under Grant Number 101.03-2021.36. The second author was supported by JSPS KAKENHI Grant Numbers 17H06833, 19K14552, 21H00988, and 23K12988. 
The authors would like to thank the Vietnam Institute for Advanced Study in Mathematics (VIASM) for its hospitality and support.
The authors also express their gratitude to the referees for their valuable comments, which have significantly contributed to improving the quality of the paper.


\begin{thebibliography}{99}

\bibitem{Al}
{Alfonsi, A.}
{Strong order one convergence of a drift implicit Euler scheme: Application to the CIR process.}
{\it Statist. Probab. Lett.}
{\bf 83}(2)
602--607
(2013).

\bibitem{AGZ10}
{Anderson, G.~W.}, {Guionnet, A.} and {Zeitouni, O.}
{\it An Introduction to Random Matrices.}
volume 118 of {\em Cambridge Studies in Advanced Mathematics}.
Cambridge University Press, Cambridge,
(2010).

\bibitem{AnKr17}
{Andersson, A.} and {Kruse, R.}
{Mean-square convergence of the BDF2-Maruyama and backward Euler schemes for SDE satisfying a global monotonicity condition}.
{\it BIT}
{\bf 57}(1)
21--53
(2017).

\bibitem{BaFo1}
{Baker, T.~H.} and {Forrester, P.~J.}
{The Calogero-Sutherland model and generalized classical polynomials.}
{\it Comm. Math. Phys.}
{\bf 188}
175--216
(1997).

\bibitem{BaFo2}
{Baker, T.~H.} and {Forrester, P.~J.}
{The Calogero-Sutherland model and polynomials with prescribed symmetry.}
{\it Nucl. Phys. B}
{\bf 492}
682--716
(1997).

\bibitem{BaFo3}
{Baker, T.~H.} and {Forrester, P.~J.}
{Non-symmetric Jack polynomials and integral kernels.}
{\it Duke Math.}
{\bf 95}
1--50
(1998).

\bibitem{BeBoDi}
{Berkaoui, A.}, {Bossy, M.}, and {Diop, A.}
{Euler scheme for SDEs with non-Lipschitz diffusion coefficient: strong convergence.}
{\it ESAIM Probab. Stat.}
{\bf 12}
1--11
(2008).

\bibitem{Bia95}
{Biane, P.}
{Permutation model for semi-circular systems and quantum random walks.}
{\it Pacific J. Math.}
{\bf 171}(2)
373--387
(1995).

\bibitem{BoDi}
{Bossy, M.} and {Diop. A.}
{An efficient discretisation scheme for one dimensional SDEs with a diffusion coefficient function of the form $|x|^{\alpha}$, $\alpha \in [1/2,1)$.}
RR-5396
INRIA
D\'ecembre
(2004).

\bibitem{Bru90}
{Bru, M.~F.}
{Wishart processes.}
{\it J. Theoret. Probab.}
{\bf 4}(4)
725--751
(1991).

\bibitem{CeLe97}
{C\'epa, E.} and {L\'epingle, D.}
{Diffusing particles with electrostatic repulsion.}
{\it Probab. Theory Related Fields}
{\bf 107}(4)
429--449
(1997).

\bibitem{CeLe01}
{C\'epa, E.} and {L\'epingle, D.}
{Brownian particles with electrostatic repulsion on the circle:Dyson's model for unitary random matrices revisited.}
{\it ESAIM Probab. Statist.}
{\bf 5}
203--224
(2001).

\bibitem{ChJaMi16}
{Chassagneux, J.~F.}, {Jacquier, A.} and {Mihaylov, I.}
{An explicit Euler scheme with strong rate of convergence for financial SDEs with non-Lipschitz coefficients.}
{\it SIAM J. Financial Math.}
{\bf 7}(1)
993--1021
(2016).

\bibitem{Ch06}
{Chybiryakov, O.}
{\it Processus de {D}unkl et relation de {L}amperti.}
PhD thesis, University Paris 6
(2006).

\bibitem{ChDeGaRoVoYo08}
{Chybiryakov, O.}, {Demni, N.}, {Gallardo, L.}, {R{\"o}sler, M.}, {Voit, M.} and {Yor, M.}
{\it Harmonic and Stochastic Analysis of Dunkl Processes}.
Hermann
(2008).


\bibitem{De07}
{Demni, N.}
{The Laguerre process and generalized Hartman–Watson law.}
{\it Bernoulli}
{\bf 13}(2)
556--580
(2007).


\bibitem{De09}
{Demni, N.}
{Radial Dunkl processes: existence, uniqueness and hitting time.}
{\it C. R. Math. Acad. Sci. Paris, Ser. I}
{\bf 347}
1125--1128
(2009).


\bibitem{DeNeSz}
{Dereich, S.}, {Neuenkirch, A.} and {Szpruch, L.}
{An Euler-type method for the strong approximation for the Cox-Ingersoll-Ross process.}
{\it Proc. R. Soc. A}
{\bf 468}
1105--1115
(2012).

\bibitem{vDi}
{van Diejen, J.~F.}
{Confluent hypergeometric orthogonal polynomials related to the rational quantum Calogero system with harmonic confinement.}
{\it Comm. Math. Phys}
{\bf 188}
467--497
(1997).

\bibitem{vDiVi}
{van Diejen, J.~F.} and {Vinet, L.}
{\it Calogero-Sutherland-Moser Models. CRM.}
Springer
(2000).

\bibitem{Du89}
{Dunkl, C.}
{Differential-difference operators associated to reflection groups.}
{\it Trans. Amer. Math. Soc.}
{\bf 311}(1)
167--183
(1989).

\bibitem{Du91}
{Dunkl, C.}
{Integral kernels with reflection group invariance.}
{\it Can. J. Math.}
{\bf 43}(6)
1214--1227
(1991).

\bibitem{DuXu}
{Dunkl, C.} and {Xu, Y.}
{\it Orthogonal Polynomials of Several Variables, Second Edition.}
Cambridge university press
(2001).

\bibitem{Dyson}
{Dyson, F.~J.}
{A Brownian-motion model for the eigenvalues of a random matrix.}
{\it J. Math. Phys.}
{\bf 3}(6)
1191--1198
(1962).

\bibitem{GaYo05}
{Gallardo, L.} and {Yor, M.}
{Some new examples of Markov processes which enjoy the time-inversion property.}
{\it Probab. Theory Relat. Fields}
{\bf 132}
150--162
(2005).

\bibitem{Gi08}
{Giles, M.~B.}
{Multilevel Monte Carlo path simulation.}
{\it Oper. Res.}
{\bf 56}(3)
607--617
(2008).

\bibitem{Gra99}
{Grabiner, D.~J.}
{Brownian motion in a {W}eyl chamber, non-colliding particles, and random matrices.}
{\it Ann. Inst. H. Poincar{\'e} Probab. Statist.}
{\bf 35}(2)
177--204
1999.

\bibitem{GrMa13}
{Graczyk, P.} and {Ma\l ecki, J.}
{Multidimensional Yamada-Watanabe theorem and its applications to particle systems.}
{\it J. Math. Phys.}
{\bf 54}(2)
021503
(2013).

\bibitem{GrMa14}
{Graczyk, P.} and {Ma\l ecki, J.}
{Strong solutions of non-colliding particle systems.}
{\it Electron. J. Probab.}
{\bf 19}(119)
1--21
(2014).

\bibitem{HaWa91}
{Hairer, E.} and {Wanner, G.}
{\it Solving Ordinary Differential Equations II:  Stiff and Differential-algebraic Problems.}
Springer
(1991).

\bibitem{He91}
{Heckman, G.~J.}
{A remark on the Dunkl differential-difference operators, in Harmonic analysis on reductive groups.}
{\it Progress in Math.}
{\bf 101}
181--191
(1991).

\bibitem{HeSc}
{Heckman, G.~J.} and {Schlichtkrull, H.}
{\it Harmonic Analysis and Special Functions on Symmetric Spaces.}
Perspectives in Mathematics
{\bf 16}
Academic Press
(1994).

\bibitem{HMS}
{Higham,~D.~J.}, {Mao,~X.} and {Stuart,~A.~M.}
{Strong convergence of Euler-type methods for nonlinear stochastic differential equations.}
{\it SIAM J. Numer. Anal.}
{\bf 40}(3)
1041--1063
(2002).

\bibitem{Hu96}
{Hu, Y.}
{Semi-implicit Euler--Maruyama scheme for stiff stochastic equations.}
Stochastic Analysis and Related Topics V:
The Silvri Workshop, Progr. Probab. 38, H. Koerezlioglu,
Birkhauser, Boston
183--202
(1996).

\bibitem{Hum12_2}
{Humphreys, J.~E.}
{\it Introduction to Lie Algebras and Representation Theory.}
Springer
(1972).

\bibitem{Hum12_1}
{Humphreys, J.~E.}
{\it Reflection Groups and Coxeter Groups.}
Cambridge university press
(1992).

\bibitem{HuKu08}
{Hurd, T.~R.} and {Kuznetsov, A.}
{Explicit formulas for Laplace transforms of stochastic integrals.}
{\it Markov Process. Related Fields}
{\bf 14}(2)
277--290
(2008).


\bibitem{HuJeKl12}
{Hutzenthalerm M.}, {Jentzen, A.} and {Kloeden, P.~E.}
{Strong convergence of an explicit numerical method for SDEs with non-globally Lipschitz continuous coefficients.}
{\it Ann. Appl. Probab.}
{\bf 22}(4)
1611--1641
(2012).


\bibitem{Kakei}
{Kakei, S.}
{Common algebraic structure for the Calogero-Sutherland models.}
{\it J. Phys. A}
{\bf 29}
619--624
(1996).



\bibitem{Ka15}
{Katori, M.}
{\it Bessel Processes, {S}chramm-{L}oewner Evolution, and the {D}yson model.}
Springer
(2015).

\bibitem{KaTa04}
{Katori, M.} and {Tanemura, H.}
{Symmetry of matrix-valued stochastic processes and noncolliding diffusion particle systems.}
{\it J. Math. Phys.}
{\bf 45}(8)
3058--3085
(2004).

\bibitem{KaTa11}
{Katori, M.} and {Tanemura, H.}
{Noncolliding squared Bessel processes.}
{\it J. Stat. Phys.}
{\bf 142}(3)
592--615
(2011).

\bibitem{Ki97}
{Kirillov A.~A.}
{Lectures on affine Hecke algebras and Macdonald’s conjectures.}
{\it Bull. Amer> Math. Soc.}
{\bf 34}(3)
251--292
(1997).

\bibitem{KP}
{Kloeden, P.} and  {Platen, E.}
{\it Numerical Solution of Stochastic Differential Equations.}
Springer
(1995).

\bibitem{KoOc01}
{K\"onig, W.} and {O'connell, N.}
{Eigenvalues of the Laguerre process as non-colliding squared Bessel processes.}
{\it Elect. Comm. in Probab.}
{\bf 6}
107--114
(2001).

\bibitem{LaVi}
{Lapointe, L.} and {Vinet, L.}
{Exact Operator Solution of the Calogero-Sutherland Model.}
{\it Comm. Math. Phys.}
{\bf 178}
425--452
(1996).

\bibitem{LiMe}
{Li, X.~H.} and {Menon, G.}
{Numerical solution of Dyson Brownian motion and a sampling scheme for invariant matrix ensembles.}
{\it J. Stat. Phys.}
{\bf 153}(5)
801–-812
(2013).

\bibitem{LT20}
{Luong, D.~T.} and {Ngo, H.~L.}
{Semi-implicit Milstein approximation scheme for non-colliding particle systems.}
{\it Calcolo}
{\bf 56}(25)
(2019).


\bibitem{Mar54}
{Maruyama, G.}
{On the transition probability functions of the Markov process.}
{\it Nat. Sci. Rep. Ochanomizu Univ.}
{\bf 5}
10--20
(1954).

\bibitem{MiPeFi}
{Mitrinovi\'c, D.~S.}, {Pe\v ari\'c, J.~E.} and {Fink, A.~M.}
{\it Inequalities involving functions and their integrals and derivatives.}
Kluwer Academic Publishers
(1994).

\bibitem{Mo75}
{Moser, J.}
{Three integrable Hamiltonian systems connected with isospectral deformations.} {\it Adv. in Math.}
{\bf 16}
197--220
(1975).

\bibitem{NeSz}
{Neuenkirch, A.} and {Szpruch, L.}
{First order strong approximations of scalar SDEs defined in a domain.}
{\it Numer. Math.}
{\bf 128}
103--136
(2014).


\bibitem{NT20}
{Ngo, H.-L.} and {Taguchi, D.}
{Semi-implicit Euler--Maruyama approximation for non-colliding particle systems.}
{\it Ann. Appl. Probab.}
{\bf 30}(2)
673--705
(2020).

\bibitem{ReYo99}
{Revuz, D.} and {Yor, M.}
{\it Continuous Martingales and {B}rownian Motion, Third Edition.}
Springer
(1999).

\bibitem{RoSh}
{Rogers, L.~C.~G.} and {Shi, Z.}
{Interacting Brownian particles and the Wigner law.}
{\it Probab. Theory Related Fields.}
{\bf 95}(4)
555--570
(1993).

\bibitem{Ro98}
{R{\"o}sler, M.}
{Generalized Hermite polynomials and the heat equation for Dunkl operators.}
{\it Commun. Math. Phys}
{\bf 192}
519--541
(1998).


\bibitem{RoVo98}
{R{\"o}sler, M.} and {Voit, M.}
{Markov processes related with Dunkl operators.}
{\it Adv,  Appl. Math.}
{\bf 21}(4)
575--643
(1998).

\bibitem{Sa13}
{Sabanis, S.}
{A note on tamed Euler approximations.}
{\it Electron. Commun. Probab.}
{\bf 18}(47)
1--10
(2013).

\bibitem{Sa16}
{Sabanis, S.}
{Euler approximations with varying coefficients: the case of superlinearly growing diffusion coefficients.}
{\it Ann. Appl. Probab.}
{\bf 25}(4)
2083--2105
(2016).

\bibitem{Sc07}
{Schapira, B.}
{The Heckman--Opdam Markov processes.}
{\it Probab. Theory Related Fields}
{\bf 138}(3--4)
495--519
(2007).

\bibitem{Sk65}
{Skorokhod, A.~V.}
{\it Studies in the Theory of Random Processes.}
Addison-Wesley
(1965).


\bibitem{Ve81}
{Veretennikov, A.~Yu.}
{On strong solution and explicit formulas for solutions of stochastic integral equations.}
{\it Math. USSR Sb.}
{\bf 39}
387--403
(1981).


\bibitem{Yor80}
{Yor, M.}
{Loi de l'indice du lacet brownien, et distribution de {H}artman-{W}atson.}
{\it Z. Wahrsch. Verw. Gebiete}
{\bf 53}(1)
71--95
(1980).

\bibitem{Zv}
{Zvonkin, A.~K.}
{A transformation of the phase space of a diffusion process that removes the drift.}
{\it Math. USSR Sbornik}
{\bf 22}
129--148
(1974).

\end{thebibliography}
\end{document}